\newtheorem{theorem}{Theorem}[section]
\newtheorem{lemma}[theorem]{Lemma}
\newtheorem{proposition}[theorem]{Proposition}
\newtheorem{definition}{Definition}[section]
\newtheorem{remark}{Remark}
\def\blfootnote{\xdef\@thefnmark{}\@footnotetext}
\newcommand{\complex}{\mathbb{C}}
\newcommand{\reals}{\mathbb{R}}
\newcommand{\cH}{{\cal H}}
\newcommand{\norm}[1]{\left\|#1\right\|}
\newcommand{\convergesto}[2]{\xrightarrow[#1\to #2]{}}
\DeclareMathOperator{\dom}{dom}
\DeclareMathOperator{\clos}{clos}
\DeclareMathOperator{\ran}{ran}
\DeclareMathOperator*{\Ker}{\mathrm{Ker}}
\DeclareMathOperator{\re}{Re}
\DeclareMathOperator{\im}{Im}
\DeclareMathOperator{\diag}{diag}
\DeclareMathOperator*{\slim}{s-lim}
\def\hom{\text{\rm hom}}
\def\soft{{\rm soft}}
\def\stiff{{\rm stiff}}
\def\e{\varepsilon}
\begin{document}
\title{\sc Asymptotic analysis of operator families and applications to resonant media}

\author[1]{Kirill~D.~Cherednichenko}
\author[2]{Yulia~Yu.~Ershova}
\author[2]{Alexander~V.~Kiselev}
\author[3]{Vladimir~A.~Ryzhov}
\author[2]{Luis~O.~Silva}

\affil[1]{Department of Mathematical Sciences, University of Bath,
  Claverton Down, Bath, BA2 7AY, UK}



\affil[2]{Departamento de F\'{i}sica Matem\'{a}tica, Instituto de
  Investigaciones en Matem\'aticas Aplicadas y en Sistemas, Universidad
  Nacional Aut\'onoma de M\'exico, C.P. 04510, Ciudad de M\'exico,
  M\'exico}

\affil[3]{Unity Technologies, 30 3rd St, San Francisco, CA, United States}
\maketitle

\begin{abstract}
  We give an overview of operator-theoretic tools that have recently proved useful in the analysis of boundary-value and transmission problems for second-order partial differential equations, with a view to addressing, in particular, the asymptotic behaviour of resolvents of physically motivated parameter-dependent operator families.
   We demonstrate the links of this rich area, on the one hand, to functional frameworks developed by S. N. Naboko and
  his students, and on the other hand, to concrete applications of current
  interest in the physics and engineering communities. 
\end{abstract}

\par{\raggedleft\slshape In memoriam Sergey Naboko\par}

\section{Introduction}

\label{sec:introduction}

It has transpired recently that a number of operator-theoretical techniques which have been under active development for the past 60 years or so are extremely useful in the asymptotic analysis of highly inhomogeneous media. Apart from yielding sharp asymptotics of the corresponding Hamiltonians in the norm-resolvent topology, this research has resulted in a number of new important, yet mostly unexplored,  connections between certain areas of the modern operator and spectral theory. These include the theory of dilations and functional models of dissipative and non-selfadjoint operators in Hilbert spaces, the boundary triples theory in the analysis of symmetric operators, zero-range models with an internal structure and, finally, the theory of generalised resolvents and their out-of-space ``dilations''.

The present survey, based on our results published in \cite{KCher,CEK,CherErKis,CEKN,CENS,CherednichenkoKiselevSilva,CherKisSilva1,CherKisSilva2,CherKisSilva3,CENS,MR2330831,Ryzh_spec,Ryzhov_closed}, attempts to shed some light on these connections and to thus present the subject area of strongly inhomogeneous media under the spotlight of modern spectral theory. We aim to show that in many ways this novel outlook allows one to gain a better understanding of the mentioned area by providing a universal abstraction layer for all the main objects to be found in the asymptotic analysis. Moreover, in most cases one can then proceed in the analysis on a purely abstract level surprisingly far, essentially postponing the use of the specific features of the problem at hand till the very last stages.

For readers' convenience, we have included a rather detailed exposition of the relevant areas of operator and spectral theory, keeping in mind that some papers laying the foundations of these areas have been poorly accessible to date.

We start with Section 2, devoted to the now-classical theory of dilations of dissipative operators. The role of dissipative operators as opposed to self-adjoint ones is that whereas the latter represent physical systems with the energy conservation law (``closed'', or conservative, systems), the former allow for the consideration of a more realistic setup, where the loss of the total energy is factored in. The importance of dissipative systems has been a common place since at least the works of I.~Prigogine; it is well-known that such systems may possess certain rather unexpected properties. The main difference between the self-adjoint and dissipative theories can be clarified,
following M.~G.~Kre\u\i n, as follows: the major instruments of
self-adjoint spectral analysis arise from the Hilbert space geometry,
whereas this geometry doesn't work very well in the
non-selfadjoint situation, with modern complex analysis taking
the role of the main tool in the investigation.

Since the seminal contribution of B.~Sz.-Nagy and C.~Foia\c{s}, the main object of dissipative spectral analysis has been the so-called dilation, representing an out-of-space self-adjoint extension, in the sense of \eqref{DilationCondition} below, of the original dissipative operator $L$. Our argument actually goes as far as to suggest that this concept underpins the whole set of ideas and notions presented in the paper. B.~S.~Pavlov's explicit construction of dilation relies upon the second major ingredient, which is the characteristic function $S(z),$ see \eqref{DissCharF}, which is an analytic operator-valued contraction in $\mathbb{C}_+$. The analysis of the dissipative operator $L$ is reduced to the study of the function $S(z),$
and hence from this point onward it belongs to the domain of complex analysis. It turns out that the sole knowledge of $S(z)$ yields an explicit spectral representation of the dilation. Moreover, Naboko has shown that, in the same representation, a whole family of operators ``close'' to $L$, self-adjoint and non-selfadjoint alike, are modelled in an effective way. This idea in particular led to the description of absolutely continuous subspaces of all the operators considered as the closure of the so-called smooth vectors set. This latter is characterised as the collection of vectors such that the resolvent of the operator in question in the spectral representation maps them as the multiplication operator. An explicit construction of wave operators and scattering matrices then follows almost immediately.

In Section \ref{sec:ryzh-funct-model}, we give a systematic exposition of this approach applied to the family of extensions of a symmetric densely defined operator on a Hilbert space $H$ possessing equal deficiency indices. In so doing, we follow closely the strategy suggested by Sergey Naboko which he had applied in the analysis of additive relatively bounded perturbations of self-adjoint operators. We thus hope to provide a coherent presentation of the major contribution by Naboko to the spectral analysis of non-selfadjoint operators.

Our analysis is facilitated by the boundary triples theory, being an abstract framework, from which the extensions theory of symmetric operators, especially differential  operators, greatly benefits. That's why we start our exposition by introducing the main concepts of this theory. The formula obtained for the scattering operator in the functional model representation allows us to derive an explicit formula for the scattering matrix, formulated in terms of Weyl-Titchmarsh $M$-matrices, i.e., in the natural terms associated with the problem. In Section \ref{sec:theor-appl-funct}, we consider an application of this technique to an inverse scattering problem on a quantum graph, where we are able to give an explicit solution to the problem of reconstructing matching conditions at graph vertices.

In Section \ref{sec:funct-model-family}, we consider the possible generalisation of the approach described above to the case of partial differential operators (PDO), associated with boundary value problems (BVP). Although the theory of boundary triples has been successfully applied to the spectral analysis of BVP for ordinary differential operators and related setups, in its original form this theory is not suited for dealing with BVP for partial differential equations (PDE), see \cite[Section
7]{MR2418300} for a relevant discussion.
Recently, when the works \cite{Grubb_Robin, Grubb_mixed, BehrndtLanger2007, Gesztesy_Mitrea, Ryzh_spec, MR2418300} started to appear, it has transpired that, suitably modified, the boundary triples approach nevertheless admits a natural generalisation to the BVP setup, see also the seminal contributions by
J. W. Calkin \cite{Calkin}, M.\,S.\,Birman \cite{Birman}, L.\,Boutet de Monvel \cite{BdeM}, M.\,S.\,Birman and M.\,Z.\,Solomyak \cite{Birman_Solomyak}, G.\,Grubb \cite{Grubb_classic}, and M.\,Agranovich \cite{Agranovich}, which provide the analytic backbone for the related operator-theoretic constructions.

In all cases mentioned above, one can see the fundamental r\^{o}le of
a certain Herglotz operator-valued analytic function, which in
problems where a boundary is present (and sometimes even without an
explicit boundary \cite{Amrein}) turns out to be a natural
generalisation of the classical notion of a Dirichlet-to-Neumann
map. Moreover, it is precisely this object that permits to define the characteristic function which in turn facilitates the functional model construction.

In Section \ref{sec:models-potent-zero}, we pass over to the discussion of zero-range models with an internal structure. The idea of replacing a model of short-range interactions by an explicitly solvable one with a zero-radius potential (possibly with an internal structure)
\cite{Berezin_Faddeev,Pavlov_internal_structure,Pavlov_Helmholtz_resonator, MR0080271,MR0024574,MR0024575, MR0051404} has paved the way for an influx of methods of the theory of extensions (both
self-adjoint and non-selfadjoint) of symmetric operators to problems
of mathematical physics. In particular, we view zero-range potentials with an internal structure as a particular case of out-of-space self-adjoint extensions of symmetric operators, the theory of which is intrinsically related to the analysis of generalised resolvents. The latter area is introduced in Section \ref{gen_res1}. We argue that out-of-space self-adjoint extensions corresponding to generalised resolvents naturally supersedes the dilation theory as presented in Section \ref{sec:pavl-funct-model}.

On yet another level, we claim that zero-range perturbations (and more precisely, zero-range perturbations with an internal structure) appear naturally as the norm-resolvent limits of Hamiltonians in the asymptotic analysis of inhomogeneous media. This relationship is established using the apparatus of generalised resolvents, as explained in Section \ref{gen_res2}.

Finally, we mention here that the theory of functional models as presented in Section \ref{sec:functional-model} is directly applicable to the treatment of models of zero-range potentials with an internal structure. Its development yields a complete spectral analysis and an explicit construction of the scattering theory for the latter.

Two different models are considered in
Section \ref{sec:models-potent-zero}, one of these being a one-dimensional periodic model with critical contrast, unitary equivalent to the double porosity one. The PDE counterpart of the latter is discussed in Section \ref{sec:appl-cont-mech}. The second mentioned model pertains to the problem with a low-index inclusion in a homogeneous material. Our argument shows that the leading order term in the asymptotic expansion of its resolvent admits the same form as expected of a zero-range model; the difference is that here the effective model of the media is no longer ``zero-range'' per se; rather it pertains to a singular perturbation supported by a manifold. Therefore, this  allows us to extend the notion of internal structure to the case of distributional perturbations supported by manifolds.

The discussion started in Section \ref{sec:models-potent-zero} is then continued in Section \ref{sec:appl-cont-mech}. We note that in every model considered so far, the internal structure of the limiting zero-range model is necessarily the simplest possible, i.e., pertains to the out-of-space extensions defined on $H\oplus \mathbb{C}^1$, where $H$ is the original Hilbert space. It turns out that this is due to the fact that we only consider norm-resolvent convergence when the spectral parameter $z$ is restricted to a compact set in $\mathbb{C}.$

Passing over to a generic setup with $z$ not necessarily in a compact, we are able to claim that in some sense the internal structure can be arbitrarily complex, provided that the spectral parameter $z$ is allowed to grow with the large parameter $a$, which describes the inhomogeneity, increasing to $+\infty.$ This allows us to present an explicit example of a non-trivial internal structure in the leading order term of the norm-resolvent asymptotics in Section 4.4, which is supplemented with the discussion of the so-called scaling regimes which we introduce in Section 5.1.

The remainder of Section 5 is devoted to the analysis of a double porosity model of high-contrast homogenisation, where the leading order term of the asymptotic expansion is obtained by an application of the operator-theoretical technique based on the generalised resolvents, as in Section 4.

\section{Functional models for dissipative and nonselfadjoint operators}
\label{sec:functional-model}


Functional model construction for a contractive linear operator~$T$
acting on a Hilbert space~$H$ is a well developed domain of the
operator theory.
Since  the pioneering works by B.~Sz.-Nagy, C.~Foia\c{s}~\cite{MR2760647},
P.~D.~Lax, R.~S.~Phillips~\cite{MR0217440},  L.~de~Branges,
J.~Rovnyak~\cite{deBranges, deBrangesRovnyak}, and
M.~S.~Liv\v{s}ic~\cite{Livsic1954}, this field of research has attracted many specialists in operator theory, complex analysis, system control,
Gaussian processes and other disciplines.
Multiple studies culminated in the development of a comprehensive theory
complemented by various applications,
see~\cite{DyM,Fu,Nikolski,NikolskiiKhrushchev, NikolskiiVasyunin1998}
and references therein.
%

The underlying idea of a functional model is the fundamental theorem
of B.~Sz.-Nagy and C.~Foias establishing the existence of a unitary dilation for
any contractive (linear) Hilbert space operator~$T$, $\|T\| \leq 1$.
The unitary dilation~$U$ of $T$ is a unitary operator on a
Hilbert space~$\mathcal H \supset H$
such that $P_H U^n|_H = T^n$ for all
$n = 1,2,\dots$.
Here $P_H : {\mathcal H} \rightarrow H$ is an orthoprojection
from~$\mathcal H$ to its subspace~$H$.
The dilation~$U$ is called minimal if the linear set~$\vee_{n>0} U^n H$
is dense in~$\mathcal H$.
The minimal dilation~$U$ of a contraction is unique up to unitary
equivalence.
The spectrum of~$U$ is absolutely continuous and covers the
unit circle~$\mathbb T = \{z \in \mathbb C  : |z| = 1\}$.
If one denotes by~$\mu$ the spectral measure of~$U$,
the spectral theorem yields that operator~$T$ is unitarily equivalent
to its model~$T = P_H z |_H$,  where ~$f \mapsto z f$  is the
operator of multiplication on the spectral representation space $L^2(\mathbb T, \mu)$ of~$U$.
%


Due to its abstract nature, a significant part of the functional model
research for contractions took place among specialists
in complex analysis and operator theory.
The parallel theory for unbounded
operators is based on the Cayley
transform~$T \mapsto  -{\rm i}(T+I)(T-I)^{-1}$ applied to a
contraction~$\|T \| \leq 1$.
Assuming that~$\clos\ran (T -I) = H$, the Cayley transform of~$T$,
$\|T\| < 1$  is a dissipative densely defined
operator~$L = -{\rm i}(T+I)(T-I)^{-1}$, not necessarily bounded in~$H$.
It is easily seen that the spectrum of~$L$ is situated in the closed upper half
plane~$\overline{\mathbb C}_+$ of the complex plane.
The imaginary part of~$L$ (defined in the sense of forms if needed)
is a non-negative operator.
%


Alongside the developments in operator theory, the second half
of the 20th century witnessed a huge progress in the spectral analysis
of linear operators pertaining to physical disciplines.
The principal tool of this was the method of Riesz projections, i.e., the contour
integration of the operator's resolvent in the complex plane of
spectral parameter.
The spectral analysis of self-adjoint operators of quantum mechanics
can be viewed as the prime example of highly successful application of contour
integration in the study of conservative systems, i.e., closed
systems with the energy preserved in the course of evolution.
%

Topical questions concerning the behavior of non-conservative systems, where
the total energy is not preserved, and of resonant systems motivated in-depth studies of (unbounded)
non-selfadjoint operators.
The analysis of non-conservative dynamical systems and of
non-selfadjoint operators especially relevant to the functional
model theory was pioneered
in the works
of M.~S.~Brodski\v{i},
 M.~S.~Liv\v{s}ic and their colleagues,
 see~\cite{Brodski,  BrodskiLivsic, LivsicBook}.
Starting with a (bounded) self-adjoint
operator~$A = A^*$
as the main operator of a closed conservative system,
these authors considered the
coupling of this system to the outside
world by means of externally attached channels.
This construction represents a model of the so-called
``open system'', that is, of a physical system
connected to its external environment.
The energy of such modified system
can dissipate through the external channels,
while at the same time the energy can be fed  into
the system from the outside in the course
of its evolution.
In works of  M.~S.~Brodski\v{\i} and
 M.~S.~Liv\v{s}ic, the external channels are modelled as an
additive perturbation of the main self-adjoint
operator~$A$ by a (bounded) non-selfadjoint
perturbation: $A \to L = A +{\rm i}V$, $V = V^*$.
The ``channel vectors'' form the Hilbert
space~$E = \clos \ran |V|$.
If~$V \geq 0$, the operator~$L$ is dissipative
(i.e., $\im (Lu,u) > 0$, $u \in H$);
it describes a non-conservative
system losing the total energy.
In turn, and quite analogously to the
case of contractions,
under the assumption~$ \mathbb C_- \subset \rho(L)$
(recall that dissipative operators satisfying this condition are
called {\em maximal}),
the self-adjoint dilation of~$L$ is a self-adjoint
operator~$\mathscr L$ on a wider
space~$\mathcal H \supset H$
such that
\begin{equation}\label{DilationCondition}
(L-zI)^{-1} = P_H (\mathscr L - zI)^{-1}|_H, \quad
z \in \mathbb C_-,
\end{equation}
where~$P_H$ is an orthogonal projection
from $\mathcal H$ onto $H$.
The operator~$\mathscr L$ describes a (larger) system with
the state space~$\mathcal H$, in which
the energy is conserved, whereas~$L$ describes
its subsystem losing its total energy.
In the general case, a non-dissipative~$L$
corresponds to an
open system where both the
energy loss and the energy supply
coexist.
%


The analysis of a non-selfadjoint operator~$L$ relies on the
notion of its characteristic function~\cite{MR0020719,Strauss1960}
discovered by M.~S.~Liv\v{s}ic in 1943--1944.
It is a bounded analytic
operator-function~$\Theta(z)$, $z \in \rho(L^*)$
defined on the resolvent set of~$L^*$ and acting
on the ``channel vectors'' from the space~$E$.
For dissipative $L$ the function~$\Theta$ coincides with the characteristic
function of a contraction~$T = (L-{\rm i}I)(L+{\rm i}I)^{-1}$
(the inverse Cayley transform of $L$), featured prominently in the works by B.~Sz.-Nagy and C.~Foias.
The characteristic function of a non-selfadjoint
operator~$L$ (or, alternatively, of its Cayley transform) determines the original operator $L$ uniquely up to a unitary
equivalence (see~\cite{Livsic1954, MR2760647}),
provided~$L$ has no non-trivial self-adjoint ``parts''.
Therefore, the study of non-selfadjoint operators
is reduced to the study of
operator-valued analytic functions.
In and of itself, this does not mean much as these functions might
be as complicated as the operators themselves.
A simplification is achieved when the values of
these functions are either matrices or belong to Schatten-von Neumann classes of
compact operators, which is often the case in physical applications.
%

Closely related to the Sz.-Nagy-Foias model for
contractions and to the open
systems framework are  the Lax-Phillips
scattering theory~\cite{MR0217440}
and the ``canonical model'' due to  L.~de Branges
and J.~Rovnyak~\cite{deBrangesRovnyak}.
The latter is developed
for completely non-isometric
contractions and their adjoints
with quantum-mechanical
applications in mind.
The Lax-Phillips  theory was
originally developed to facilitate the analysis of
scattering problems  for hyperbolic wave equations in exterior domains to compact scatterers.
It provides useful
intuition into the underpinnings of
the functional model construction and
this connection will be exploited
in the next section.

It was realized very early~\cite{MR0206711} that the three
theories, i.e., the open systems theory, the Sz.-Nagy-Foia\c{s} model, and
the Lax-Phillips scattering, all deal with
essentially the same objects. 
In particular, the characteristic function of a contraction (or of a
dissipative operator) emerges, albeit under disguise, in all three theories.
Being a purely theoretical abstract object in the Sz.-Nagy-Foia\c{s} theory, the characteristic function emerges as a
transfer function of a linear system according to M.~S.~Brodski\v{\i} and
 M.~S.~Liv\v{s}ic, and  as the scattering matrix in the Lax-Phillips theory.
%
The characteristic function of a contraction is
also the central component in
the L.~de Branges and J.~Rovnyak
model theory~\cite{deBrangesRovnyak}.
Deep connections between the Sz-Nagy-Foias and the
de Branges-Rovnyak
models are clarified in a series of
papers by N.~Nikolskii and V.~Vasyunin~\cite{NikolskiiVasyunin1986,
NikolskiiVasyunin1989, NikolskiiVasyunin1998}.
%
%
%

\subsection{Lax-Phillips theory}

The Lax-Phillips scattering theory~\cite{MR0217440}
for the acoustic waves by
a smooth compact obstacle in~$\mathbb R^n$ with  $n \geq 3$ odd
provides an excellent
illustration of the intrinsic links
between the operator theory and
mathematical physics.
%
%
A number of concepts found in the theory of functional
models of dissipative operators
find their direct counterparts here, expressed in the
language of realistic physical processes.
For instance, the characteristic function
of the operator governing the
scattering process
is realized as the scattering matrix,
the self-adjoint dilation corresponds to
the operator of ``free'' dynamics, i.e., the
wave propagation process observed
in absence of the obstacle, and the scattering
channels are a direct analogue of the channels found in
the Brodski\v{\i}-Liv\v{s}ic  constructions.
In this section we briefly recall
the main concepts of Lax-Phillips scattering theory.

Let $\cal H$ be a Hilbert space
with two mutually
orthogonal subspaces $\cal
D_\pm \subset H$,  $\cal D_- \oplus \cal D_+ \neq \mathcal H$.
Denote by $\mathcal K$ the orthogonal
complement of~$\cal D_- \oplus \cal D_+$
in $\mathcal H$.
%
%
Assume the existence of a single parameter evolution group of
unitary operators~$\{U(t)\}_{t\in \mathbb R}$ with the following properties
\begin{equation}\label{conditionsUt}
\begin{aligned}
U(t)\mathcal{D}_- & \subseteq \mathcal{D}_-, \quad t \leq 0,  \\[0.3em]
U(t) \mathcal{D}_+ & \subseteq \mathcal{D}_+, \quad t \geq 0, \\[0.3em]
\cap_{t \in \mathbb R}U(t) \mathcal{D}_\pm &= \{0\},\\[0.3em]
\clos \cup_{t \in \mathbb R}U(t) \mathcal{D}_\pm & =  \mathcal{H}.
\end{aligned}
\end{equation}
In the acoustic scattering,
the space~$\mathcal H$ consists of
solutions to the wave equation
(i.e., acoustic waves) and is
endowed with the energy norm.
The group~$U(t)$ describes the
evolution of ``free'' waves in~$\mathcal H$,
that is,
the group~$U(t)$ maps
the Cauchy data of solutions at the time~$t = 0$
to their Cauchy data at the time $t$.
Since~$U(t)$ is unitary for all $t\in\mathbb R$,
the energy of solutions is preserved
under the time
evolution~$f = U(0) f \mapsto  U(t)f$, for any
Cauchy data~$f \in \mathcal H$.
Correspondingly, the infinitesimal
generator~$\mathscr B$ of~$U(t)$ is a
self-adjoint
operator in $\mathcal H$ with purely absolutely continuous spectrum
covering the whole real line.
The subspaces~$\cal D_\pm$ are called incoming and
outgoing subspaces of~$U(t)$.
These names are well justified. Indeed,
the subspace~$\mathcal D_-$ consists of solutions
that do not interact with the
obstacle prior to the moment~$t = 0$,
whereas $\mathcal D_+$ consists of scattered waves
which do not interact with the obstacle after $t = 0$.
There exist two representations for the generator~$\mathscr B$
associated with $\mathcal D_\pm$ (the so called incoming and
outgoing translation representations), in which
the group~$U(t)$ acts as the right shift
operator~$U(t): u(x)\mapsto u(x-t)$ on $L^2(\mathbb R, E)$
with some auxiliary Hilbert space~$E$.
In these representations the subspaces~$\mathcal D_\pm$
are mapped to $L^2(\mathbb R_\pm, E)$.
It is not difficult to see that this
construction satisfies the assumptions~(\ref{conditionsUt}).
Denote  by
$P_\pm : \mathcal{H} \to  [\mathcal{D}_\pm]^\perp$
the orthogonal projections to the complements of
$\mathcal{D}_\pm$ in $\cal H$.
The elements of~$\mathcal K$ are the scattering waves that are neither
incoming in the past, nor outgoing in the future,
i.e., the waves localized in the vicinity of the obstacle.
The interaction of incoming waves with the obstacle, i.e.,
the scattering
process, is then described by the compression of  the group~$U(t)$ to the
neighborhood of the obstacle:
$$
Z(t) = P_+ U(t) P_- = P_{\mathcal K} U(t) P_{\mathcal K}, \quad t \geq 0.
$$
Here~$ P_{\mathcal K} = P_- P_+$ is an orthoprojection on~$\mathcal H$.
The operator family~$\{Z(t)\}_{t \geq 0}$ forms a strongly
continuous semigroup acting on~$\mathcal{K}$.
Since $Z(t)$ is a compression of the unitary group, it is clear that
$\|Z(t)\| \leq 1$ for all $t \geq 0$.
The infinitesimal generator~$B$ of the semigroup~$\{Z(t)\}_{t \geq 0}$
turns out to be a
linear operator with purely discrete spectrum~$\{\lambda_k\}$
with~$\re(\lambda_k) < 0$,
$k = 1,2,\dots$.
The poles~$\{\lambda_k\}$ of the resolvent of~$B$
and the corresponding
eigenvectors are interpreted
as the scattering resonances.
These resonances correspond
to the poles of the scattering matrix
defined as an operator-valued function acting in the
space $L^2(\mathbb R_-, E)$, i.e., the space of vector
functions taking values in~$E$.
The scattering matrix is mapped by the Fourier transform to the
analytic in the lower half-plane operator function~$S(z)$, $z\in\mathbb C_-$
with zeroes at~$z_k = -{\rm i}\lambda_k$.
The boundary values $S(k - {\rm i}0)$ on the real
axis exist almost
everywhere in the strong operator topology
and are
unitary for almost all $k \in \mathbb R$.
The function~$S(z)$
permits an analytic continuation~$S(z) = [S^*(\bar z)]^{-1}$
to the upper half-plane, where it is meromorphic.

The results of~\cite{MR0206711} show
that $\theta(z): = S^*(\bar z)$ coincides with
the Liv\v{s}ic's characteristic
function of a dissipative operator being
unitary equivalent to the infinitesimal
generator of~$Z(t)$.
Consequently,~$\theta\bigl((z+{\rm i})/(z-{\rm i})\bigr)$
 is the characteristic function
of its Cayley transform as
defined by B.~Sz.-Nagy
and C.~Foia\c{s}~\cite{MR2760647}.
Finally,
the resolvents of
operators~$\mathscr L = -{\rm i}\mathscr  B$ and $L = -{\rm i}B$
satisfy the dilation equation~(\ref{DilationCondition}).
In other words, the operator corresponding to the free dynamics
is the self-adjoint dilation of the dissipative operator
that governs the scattering process.
%

\subsubsection{Minimality, non-selfadjointness, resolvent}

The beautiful geometric interpretation of scattering processes provided by the Lax-Phillips
theory is not entirely transferable to
the modelling of a general
dissipative (or contractive) operator.
For instance, given an arbitrary
dissipative operator~$L$ on a Hilbert
space~$K$, its  selfadjoint
dilation $\mathscr L$ does
not exists {\it a priori} and must be
explicitly constructed first.
In addition, such a
dilation~$\mathscr L$  should
be {\it minimal}, that is,
it must contain no
reducing self-adjoint parts
unrelated to the operator~$L$.
Mathematically, the minimality condition is
expressed by the equality
\[
\clos\bigvee_{z \notin R} {(\mathscr L -  zI)^{-1}
\mid_K}  =
\mathcal H
\]
where $\mathcal H$ is the dilation space~$\mathcal H
\supset K$.
The construction of a dilation satisfying this
condition is a highly non-trivial task which was successfully addressed for contractions
by B.~Sz.-Nagy and C.~Foia\c{s}~\cite{MR2760647}, aided by a theorem of M.~A.~Na\u{\i}mark \cite{Naimark1943},
and then by B.~Pavlov~\cite{MR0510053, Drogobych}
in two
important cases of dissipative
operators arising in
mathematical physics. Later, this construction was generalized to a generic setting. We dwell on this further in the following sections.
%


For obvious reasons, the functional model theory of non-selfadjoint operators deals with operators possessing no non-trivial reducing
self-adjoint parts.
Such operators are called {\em completely non-selfadjoint} or, using
a somewhat less accurate term, {\em simple}.
The rationale behind this condition is easy to
illustrate within the Lax-Phillips framework.
Let the dissipative operator~$L = -{\rm i}B$ governing
the wave dynamics in a vicinity of an obstacle possess
a non-trivial self-adjoint part.
This part is then a self-adjoint operator
acting on the subspace spanned
by the eigenvectors of~$L$
corresponding to its real eigenvalues.
The restriction of $Z(t)$ to this subspace
is an isometry for all~$t\in \mathbb R$, and
the energy of these states remains constant
(recall that the space~$K$ is equipped with
the energy norm).
Therefore these waves stay in
a bounded region adjacent
to the obstacle at all times.
These bound states do
not participate in scattering, as they
are invisible to the scattering process
describing the asymptotic
behaviour as $t \to\pm\infty$, so that
the matrix~$S$ (or the characteristic
function of~$L$) contains no
information pertaining to them.
In operator theory, this is known as the
claim that the characteristic function of a dissipative
operator is oblivious to its self-adjoint part.
It is also well-known, that the characteristic
function uniquely determines the
completely non-selfadjoint part of a
dissipative operator.
The interaction dynamics of the Lax-Phillips scattering
supposes neither minimality nor complete
non-selfadjointness.
One can envision incoming waves that do not interact with the
obstacle during their evolution and eventually become outgoing as~$t\to+\infty$.
 It is also easy to conceive of trapping
 obstacles preventing waves
 from leaving the neighbourhood of the
obstacle at~$t\to+\infty$.
The geometry of such
obstacles cannot be fully recovered from
the scattering data because, as explained above,
these standing waves
do not participate in the
scattering process.
%
%


In the applications discussed below,
unless explicitly stated otherwise,
all  non-selfadjoint operators are
assumed closed, densely defined
with regular points in both lower
and upper half planes.
The latter condition can be relaxed but is adopted in what follows for the sake of convenience.

\subsection{Pavlov's functional model and its spectral form}
\label{sec:pavl-funct-model}

Functional models for prototypical dissipative operators
of mathematical physics (as
opposed to the model for contractions), alongside explicit constructions of self-adjoint
dilations, were investigated by
B.~Pavlov in his
 works~\cite{MR0365199, MR0510053, Drogobych}.
Two classes of dissipative operators were considered:
the Schr\"odigner operator in $L^2(\mathbb R^3)$ with
a complex-valued potential, and the operator
generated by the differential
expression~$-y'' + q(x)y $ on the interval~$[0, \infty)$
with a dissipative boundary condition at $x = 0$.
In both cases the self-adjoint dilations are constructed explicitly in terms of the problem at hand, and
supplemented by the model
representations known today
as ``symmetric''
and commonly referred to
as the Pavlov's model.
The results of~\cite{MR0365199, MR0510053, Drogobych}
were extensively employed in
various applications and provided a foundation for the subsequent constructions
of self-adjoint dilations and functional models for
general non-selfadjoint
operators.

\subsubsection{Additive perturbations~\cite{MR0365199, MR0510053}}\label{sec:add_perturbations}

Let $A = A^*$ be a selfadjoint unbounded operator on a Hilbert space~$K$ and
$V$ a bounded non-negative operator~$V  = V^* = \alpha^2/2\geq 0$, where
$\alpha := (2V)^{1/2}$.

The paper~\cite{MR0510053} studies the dissipative
Schr\"odinger operator~$L = A +({\rm i}/2)\alpha^2$ in $\mathbb R^3$
defined by the differential expression~ $-\Delta + q(x)+({\rm i}/2)\alpha^2(x)$
with real continuous functions~$q$ and  $\alpha$
such that $0 \leq \alpha\leq C < \infty$.
The operators~$A = -\Delta + q$ and~$V = \alpha^2/2$ are the
real and imaginary parts
of~$L$ defined on $\dom(L) = \dom(A)$.
Assuming the operator~$L$ has no non-trivial self-adjoint components
 and the  resolvent set of~$L$ contains points in both upper and
lower half planes,
the operator $L$ is a maximal completely non-selfadjoint, densely defined
dissipative operator on~$K$.

According to the general theory, there exists a minimal dilation of
$L$, which is a self-adjoint operator~$\mathscr L$ on a Hilbert space $\mathcal
H \supset K$ such that
\begin{equation}\label{eqn:DilationEq}
 (L - zI)^{-1} = P_K  ( {\mathscr  L} - zI)^{-1}|_K, \quad z \in\mathbb C_-,
\end{equation}
where $P_K$  is the orthogonal projection
from $\mathcal H$ onto its subspace~$K$.

The dilation constructed in~\cite{MR0510053}  closely
resembles the generator $\mathscr B$ of the unitary group $U(t)$ in the Lax-Phillips theory: Pavlov realised that a natural way to construct a self-adjoint dilation would be to add the missing ``incoming'' and ``outgoing'' energy channels to the original non-conservative dynamics, thus mimicking the starting point of Lax and Phillips.
%
%
The challenge here is to determine the operator that describes the ``free''
evolution of the dynamical system given only its ``internal'' part, thus in some sense ``reversing" the Lax-Phillips approach.

Denote $E:= \clos \ran \alpha$ and define the dilation
space as the direct sum of
$K$ and the equivalents of incoming and
outgoing channels~ $\mathcal D_\pm = L^2(\mathbb
R_\pm, E)$,
\begin{equation*}
\mathcal H =  \mathcal D_- \oplus K \oplus \mathcal D_+
\end{equation*}
Elements of~$\mathcal H$ are represented as
three-component vectors~$(v_-, u,
v_+)$ with
$v_\pm \in \mathcal D_\pm$ and $u \in K$.
The Lax-Phillips theory suggests that the
dilation~$\mathscr L$
restricted
to~$\mathcal D_- \oplus \{0\} \oplus\mathcal D_+$
should be the self-adoint generator~$A$ of
the continuous unitary
group of right shifts~$\exp({\rm i}At) =U(t) : v(x) \mapsto v(x -t)$
in $L^2(\mathbb R, E)$.
By Stone's theorem, one has
\[
{\rm i}A v = \lim\limits_{t\downarrow 0}t^{-1}[U(t) v - v]
=  \lim\limits_{t\downarrow 0}t^{-1}\bigl(v(x-t) - v(x)\bigr) = - v^\prime (x),
\]
so that
the generator of~$U(t)$ is the operator~$A : v \mapsto {\rm i}dv/dx$.
Hence, the action of~$\mathscr L$ on the channels~$\mathcal D_\pm$
is defined by
$\mathscr L : (v_-,0, v_+) \mapsto ( {\rm i}v_-^\prime,0,{\rm i}v_+^\prime)$.
The self-adjointness of~$\mathscr L = \mathscr L^*$ and
the requirement~(\ref{eqn:DilationEq}) yield
the form of dilation~$\mathscr L$
as found in~\cite{MR0510053},
\begin{equation}\label{DilationForm}
 \mathscr L  \left(
 \begin{array}{l} v_-
                     \\ u
                     \\   v_+
\end{array}\!\!\right)
=
 \begin{pmatrix} {\rm i}\dfrac{dv_-}{dx}
                     \\[0.7em]A u + \dfrac{\alpha}{2} \left[v_+(0) + v_-(0)  \right]
                     \\[0.5em]  {\rm i}\dfrac{dv_+}{dx}
\end{pmatrix},
\end{equation}
defined on the domain
\[
 \dom(\mathscr L) = \left \{(v_-, u, v_+ ) \in \mathcal H \mid
 v_\pm \in W_2^1(\mathbb R_\pm, E), u \in \dom(A),
 v_+(0) - v_-(0) = i\alpha u
 \right\}
\]
Embedding theorems for the Sobolev space~$W_2^1$
guarantee the existence of  boundary values $v_\pm(0)$.
The ``boundary condition''~$v_+(0) - v_-(0) = {\rm i}\alpha u$
can be interpreted
as a concrete form of coupling between the incoming and outgoing channels
$\mathcal D_\pm$ realized by the
imaginary part of~$L$ acting
on $E.$
%

%
When $\alpha = 0$, the right hand side of~(\ref{DilationForm}) is the
orthogonal sum of two self-adjoint operators, that is,
the operator~$A$ on $K$
and the operator~${\rm i}d/dx$ acting
in the orthogonal sum of
channels~$L^2(\mathbb R, E )
= \mathcal D_-\oplus\mathcal D_+$.
The characteristic function of $L$ is the contractive
operator-valued
function defined by the formula
\begin{equation}
	\label{DissCharF}
S(z) =  I_E + {\rm i}\alpha(L^* - zI)^{-1}\alpha : E \to E,
\quad z \in \mathbb C_+
\end{equation}
According to the fundamental result of Adamyan and Arov~\cite{MR0206711}, the
function~$S$ coincides with
the scattering operator of the pair $(\mathscr L,
\mathscr
L_0)$ where $\mathscr L_0$ is defined by (\ref{DilationForm})
with $u = 0$ and  $\alpha = 0$.

\subsubsection{Extensions of symmetric operators~\cite{Drogobych}}

Consider the differential expression
\[
\ell y = -y'' +q(x)y
\]
in $K= L^2(\mathbb R_+)$ with a real function~$q$
such that the Weyl limit point case takes place.
Denote by $\varphi$ and $\psi$ the standard solutions to
the equation $\ell y = z y$ with $z
\in \mathbb C_+$, satisfying the boundary conditions
\[
\varphi(0,z) = 0, \quad  \varphi^\prime (0,z) = -1,
\quad \psi(0,z) = 1, \quad \psi^\prime(0,z) = 0
\]
Then the Weyl solution~$\chi = \varphi + m_\infty(z)\psi\in L^2(\mathbb R_+)$, where $m_\infty(z)$ is the Weyl
function pertaining to~$\ell$ and
corresponding to the boundary condition~$y(0) = 0$, is defined
uniquely.
The function $m_\infty(z)$ is analytic with positive
imaginary part for $z
\in \mathbb C_+$.

Define the operator $L$ in $K =L^2(\mathbb R_+)$
by the expression~$\ell$
supplied with the non-selfadjoint
boundary condition at $x = 0$
\[
( y^\prime - \left . h y) \right |_{x =0} = 0 , \quad \text{where} \quad  \im h
= \frac{\alpha^2}{2}, \quad \alpha > 0
\]
A short calculation ascertains  that $L$ is dissipative indeed.

The Pavlov's dilation of $L$ is the operator~$\mathscr L$ in the
space~$\mathcal H = \mathcal D_- \oplus K \oplus \mathcal D_+$,
where $\mathcal D_\pm = L^2(\mathbb R_\pm)$, defined
on elements
$( v_- , u, v_+) \in \mathcal H$
which satisfy
\begin{equation}\label{eqn:ExtensionsDomain}
\begin{aligned}
& v_\pm \in W_2^1(\mathbb R_\pm), \quad u,\ell u \in L^2({\mathbb R}_+),
\\[0.3em]
& \left . u^\prime - h u \right|_0 = \alpha v_-(0), \quad
 \left . u^\prime - \bar h u \right|_0 = \alpha v_+(0)
\end{aligned}
\end{equation}
The action of the operator $\mathscr L$ on this domain is set by the formula
\begin{equation}\label{eqn:ExtensionsAction}
 \mathscr L  \left(
 \begin{array}{l} v_-
                     \\ u
                     \\   v_+
\end{array}\!\!\right)
=
 \begin{pmatrix} {\rm i}\dfrac{dv_-}{dx}
                     \\[0.8em] \ell u
                     \\[0.4em]   {\rm i}\dfrac{dv_+}{dx}
\end{pmatrix}.
\end{equation}
The characteristic function of~$L$ is the scalar analytic in the
upper half-plane function~$S(z)$
given by
\[
S(z) = \frac{ m_\infty(z) - h }{m_\infty(z) - \bar h}, \quad \im(z) > 0.
\]
Note that since $\im h =\alpha^2/2$, the function $S(z)$ can be
rewritten in a form similar to that of the characteristic
function~(\ref{DissCharF}), i.~e.,
\begin{equation}\label{CharF_h}
 S(z) = 1 + {\rm i}\alpha\bigl(\bar h -
m_\infty(z)\bigr)^{-1}\alpha.
\end{equation}

\subsubsection{Pavlov's symmetric form of the dilation}\label{sec:pavlov_model}

According to the general theory~\cite{MR2760647},
once the characteristic function~$S$ is known, the
analysis of the completely non-selfadjoint part of
the operator~$L$ is reduced to the
analysis of~$S$.
Hence, the typical questions of the operator theory
(the spectral analysis, description of invariant
subspaces) are reformulated as problems pertaining
to analytic (operator-valued) functions.
Assume  that $L$ is completely non-selfadjoint
 and $\rho(L) \cap \mathbb C_\pm \neq \varnothing$.
Let $\mathscr L $ be its minimal self-adjoint dilation,
$S$ being the characteristic function of~$L$.
Owing to the general theory~\cite{MR2760647},
the operator~$L$ is unitary equivalent
to its model acting
in the spectral representation of~$\mathscr L$
in accordance with~(\ref{eqn:DilationEq}).
Recall that the characteristic
function~$S(z)$, $z \in \mathbb C_+$
is analytic in the upper half-plane taking values in
the set of contractions of~$E$,
\[
S(z) : E \to E, \quad \|S(z) \| \leq 1, \quad z \in \mathbb C_+
\]
Due to the operator version of Fatou's
theorem~\cite{MR2760647},
the nontangential boundary values of the
function~$S$
exist in the strong operator topology
almost everywhere on the real line.
Put~$S = S(k): =  \slim_{\varepsilon\downarrow 0}
S(k + {\rm i}\varepsilon)$
and~$S^* = S^*(k)
:= \slim_{\varepsilon\downarrow 0}[S(k + {\rm i}\varepsilon)]^*$,
both limits  existing for almost all~$k \in \mathbb R$.
The Fatou theorem guarantees that
the operators~$S(k)$ and $S^*(k)$ are
contractions on~$E$
for almost all~$k\in\mathbb R$.
The symmetric form of the dilation
is obtained
by completion
of the dense linear set in~$L^2(E) \oplus L^2(E)$
with respect to the norm
\begin{equation}\label{ModelSpace}
 \left\|\binom{\tilde g}{g}\right\|_{\mathscr H}^2 :=
 \int\limits_{\mathbb R}
  \left\langle
    \left(
      \begin{array}{cc} I &  S^* \\  S & I \end{array}
    \right)
    \binom{\tilde g}{g},
    \binom{\tilde g}{g}
  \right\rangle_{ E\oplus  E}
 dk,
\end{equation}
followed by factorisation by the elements of zero norm.
%
In the symmetric  representation, the incoming and outgoing
subspaces~$\mathcal  D_\pm$ admit their simplest possible
form.
On the other hand,
calculations related to
the space~$K$ can meet
certain difficulties,  since
the ``weight'' in~(\ref{ModelSpace})
can be singular.
Also note that the
elements of~$\mathscr H$ are not individual functions from
$L^2(E) \oplus L^2(E)$ but rather equivalence classes~\cite{NikolskiiKhrushchev, NikolskiiVasyunin1989}.
Despite these complications,
the Pavlov's symmetric model has been widely accepted in
the analysis of non-selfadjoint operators, and in particular of the operators
of mathematical physics.
Two alternative and equivalent forms
of the norm $\|\cdot\|_{\mathscr H}$ that
are easy to derive,
\begin{equation*}
\left\|\binom{\tilde g}{g}\right\|_{\mathscr H}^2 =
 \left\| S\tilde g + g \right\|_{L^2( E)}^2 +
 \left\|\Delta_* g \right\|_{L^2( E)}^2 =
\left\|\tilde g +  S^* g \right\|_{L^2( E)}^2 +
 \left\|\Delta \tilde g \right\|_{L^2( E)}^2,
\end{equation*}
where $\Delta := \sqrt{I- S^* S}$ and
$\Delta_* := \sqrt{I-  S S^*}$, show
that for each $\binom{\tilde g}{g} \in \mathscr H$
expressions
$ S\tilde g + g$, $\tilde g +  S^* g$, $\Delta
\tilde g$,
and $\Delta_*  g$ are in fact usual square summable
vector-functions
from~$L^2( E)$.
Moreover, due to these equalities the form~(\ref{ModelSpace})  is
positive-definite indeed and thus represents a norm.

The space
\[
\mathscr H = L^2\left(\begin{matrix}
                            I & S^* \\ S & I
                           \end{matrix}\right)
\]
with the norm defined
by~(\ref{ModelSpace}) is the space of spectral
representation for the self-adjoint
dilation~$\mathscr L$ of the operator~$L$. Henceforth we will denote the corresponding unitary mapping of $\mathcal{H}$ onto $\mathscr{H}$ by $\Phi$.
It means that
the operator of multiplication by the independent variable
acting on~$\mathscr H$, i.e., the
operator~$f(k) \mapsto k f(k) $, is unitary equivalent
to the dilation~$\mathscr L$.
Hence, for~$z \in\mathbb
C \setminus \mathbb R$, the
mapping~$\binom{\tilde g}{g} \mapsto (k - z)^{-1}
\binom{\tilde g}{g}$
is unitary equivalent to the resolvent~$(\mathscr L-z)^{-1}$
and therefore
\begin{equation*}
 (L - zI)^{-1} \simeq\left . P_{\mathscr K} (k - z)^{-1} \right
 |_{\mathscr K}, \quad z \in \mathbb C_-,
\end{equation*}
where the $\simeq$ sign is utilised to denote unitary equivalence.

The incoming and outgoing subspaces of the dilation
space~$\mathscr H$ admit the form
\begin{equation*}
 {\mathscr D}_+ := \binom{H^2_+(E)}{0}, \quad
 {\mathscr D}_- := \binom{0}{H^2_-( E)}, \quad
 \mathscr K := \mathscr H \ominus
  \left[
    {\mathscr D}_+ \oplus {\mathscr D}_-
  \right]
\end{equation*}
where $H_2^\pm(E)$ are the Hardy classes of $
E$-valued vector functions analytic in $\mathbb C_\pm$.
As usual~\cite{MR822228}, the functions from vector-valued Hardy
classes~$H_2^\pm(E)$ are identified with their
boundary values existing almost everywhere
on the real line.
They form two complementary mutually orthogonal subspaces
so that
$L^2(E) = H^2_+(E)\oplus H^2_-(E)$.

The image~$\mathscr K$ of $K$ under the spectral
mapping $\Phi$ of the dilation space~$\mathcal H$
to~$\mathscr H$ is the subspace
\begin{equation*}
 \mathscr K =
   \left\{
     \binom{\tilde g}{g} \in \mathscr H \,\, :\,\,
       \tilde g +  S^* g \in H^2_-( E), \,
        S \tilde g + g \in H^2_+ ( E)
   \right\}
\end{equation*}
The orthogonal projection~$P_\mathscr K$ from
$\mathscr H$ onto
$\mathscr K$ is defined by formula~(\ref{2.1.3.2})
on a dense set of functions from
$L^2(E) \oplus L^2(E)$ in $\mathscr H$
\begin{equation}\label{2.1.3.2}
  P_{\mathscr K} \binom{\tilde g}{g} =
    \binom{\tilde g - P_+ ( \tilde g +   S^* g)}
          {       g - P_- ( S \tilde g +   g)},
    \quad \tilde{g} \in L^2( E), \; g \in L^2( E).
\end{equation}
Here $P_\pm$ are the orthogonal projections of $L^2$
onto the Hardy
classes~$H_2^\pm$.

Further information on model representations can be found in the
series of papers~\cite{NikolskiiKhrushchev,NikolskiiVasyunin1986,
NikolskiiVasyunin1989, NikolskiiVasyunin1998}
and the treatise~\cite{Nikolski}.


%

%
%
%
%

\subsubsection{Naboko's functional model of non-selfadjoint operators}

The development of the functional
model approach for
contractions inspired the
search for such models of
non-dissipative operators.
The attempts to follow the blueprints of Sz.-Nagy-Foias
and Lax-Philitps meet serious challenges
rooted in the absence of a proper self-adjoint dilation for
non-dissipative operators: the dilatation in this case
is a self-adjoint operator acting on a space with an
indefinite metric~\cite{Davis}.
Consequently, the characteristic function of a non-dissipative
operator is an analytic operator-function, contractive with
respect to an indefinite metric~\cite{DavisFoias}, which
considerably hinders any further progress in this direction.
We mention the works~\cite{Ball, McEnnis}, the
monograph~\cite{Kuzhel} and references therein
for more details and examples.

An alternative approach was suggested in the late seventies with the
publication of papers~\cite{MR0500225,nabokozapiski2} and
especially~\cite{MR573902} by S.~Naboko who
found a way to represent a non-dissipative
operator in a model space of a suitably
chosen dissipative one.
%
%
%
We refer the reader to the relevant section of the paper on Sergey Naboko's mathematical heritage in the present volume for the details of
the mentioned approach and the relevant references. In the next section of the present paper we outline the main ingredients of an adaptation of the latter to the setting of extensions of symmetric operators, which was developed by Sergey's students.

We mention that this set of techniques allows one to significantly advance the spectral analysis of non-selfadjoint operators, including the definition of the absolutely continuous and singular subspaces and the study of spectral resolutions of identity. In particular, of major importance is the possibility to construct the wave and scattering operators in a natural representation. It should be noted that the self-adjoint scattering theory (and all the major versions of the latter) turns out to be included as a particular case of a much more general non-selfadjoint one.

\subsection{Functional model for a family of
extensions of a symmetric operator}
\label{sec:ryzh-funct-model}

The generic model constructed in~\cite{Mak_Vas, Ryzhov_closed}
lends itself as a powerful and universal tool for the analysis of
(completely) non-selfadjoint and (completely) non-unitary operators.
Since characteristic functions of such operators are
essentially unique and define the operators up to unitary
equivalence, all model considerations are immediately
available once this
function is known.
In many applications, however, the results sought  need to be
formulated in terms of the problem itself (i.e., in the natural terms), rather than in the abstract language
of characteristic functions (and their transforms).
One prominent example when the general theory is not
sufficient is the setting of extensions of symmetric operators and the associated setting of operators pertaining to boundary-value problems.
Within this setup, the results  are expected to be formulated
as statements concerning
the symmetric operator itself and the
relevant properties of the
extension parameters.
Some results in this direction were obtained by B.~Solomyak
in \cite{Solomyak2}, but the related calculations tend to be rather tedious due to the reduction to the case of contractions which is required.
%


The extension theory of symmetric operators, especially
differential  operators, greatly benefits from the abstract
framework known as the boundary triples theory.
The basic concepts of this operator-theoretic approach
can be found in the textbook~\cite{MR2953553}
by K.~Schm{\"u}dgen.
The recent monograph~\cite{BHS} contains a detailed
treatment of this area.

It is therefore quite natural to utilise this approach in conjunction with the functional models techniques outlined above in the analysis of non-selfadjoint extensions of symmetric operators.
This section briefly outlines the results pertaining to the functional model construction for dissipative and non-dissipative extensions of symmetric operators and the related developments, including an explicit construction of the wave and scattering operators and of the scattering matrices. Since all the considerations in this area are essentially parallel to the ones of Naboko in his development of spectral theory for additive perturbations of self-adjoint operators, one can consider this narrative as a rather detailed exposition of Naboko's ideas and results in a particular case, important for applications.

\subsubsection{Boundary triples}

The fundamentals of the boundary triples theory
have been introduced
in~\cite{BHS, MR2953553}, see also
references therein.
%

%

Denote by~$A$ a closed and densely defined symmetric
operator on the separable Hilbert space $H$ with the
domain~$\dom A$, having equal deficiency
indices~$0 < n_{+}(A)= n_{-}(A) \leq\infty$.

\begin{definition}[\cite{MR0365218}]\label{def:bonudaryTriple}
A triple $\{\mathcal{K}, \Gamma_0, \Gamma_1 \}$ consisting of an auxiliary
Hilbert space $\mathcal{K}$ and linear mappings $\Gamma_0, \Gamma_1$ defined
everywhere on  $\dom A^*$ is called a \emph{boundary triple} for $A^*$ if the following
conditions are satisfied:
\begin{enumerate}
\item
The abstract Green's formula is valid
    \begin{equation}\label{GreenFormula}
        (A^*f,g)_H - (f, A^*g)_H = (\Gamma_1 f, \Gamma_0 g)_{\mathcal{K}} -
        (\Gamma_0 f, \Gamma_1 g)_{\mathcal{K}},\quad f,g \in \dom A^*
    \end{equation}
\item
  For any $Y_0, Y_1 \in{\mathcal{K}}$ there exist $f \in
\dom A^*$, such that $\Gamma_0 f = Y_0$, $\Gamma_1 f = Y_1$.
In other words,
the mapping~$f \mapsto \Gamma_0 f \oplus \Gamma_1 f $, $f \in \dom A^*$
to ${\mathcal{K}}\oplus {\mathcal{K}}$ is surjective.
\end{enumerate}
\end{definition}
\noindent
It can be shown~(see \cite{MR0365218}) that a boundary
triple for~$A^*$
exists assuming only~$ n_{+}(A)=n_{-}(A) $.
Note also that a boundary triple
is not unique.
Given any bounded self-adjoint operator~$\Lambda = \Lambda^*$
on $\mathcal K$, the
collection~$\{\mathcal K, \Gamma_0 , \Gamma_1 + \Lambda \Gamma_0\}$
is a
boundary triple for~$A^*$ as well,
provided that~$\Gamma_1 + \Lambda \Gamma_0$
is surjective.

\begin{definition}\label{def:WeylFunction}
Let $\mathscr T = \{\mathcal{K}, \Gamma_0, \Gamma_1 \}$
be a boundary triple
of~$A^*$.
The Weyl function of $A^*$ corresponding to~$\mathscr T$ and
denoted $M(z)$, $z \in\mathbb C\setminus \mathbb R$ is an analytic
operator-function with a positive imaginary part for~$z \in \mathbb C_+$
(i.e., an operator $R$\nobreakdash-function) with values in
the algebra of bounded operators on $\mathcal K$ such that
\[
  M(z)\Gamma_0 f_z = \Gamma_1 f_z, \quad f_z \in {\rm ker}(A^* -zI),
  \quad z \notin \mathbb R
\]
For $z\in\mathbb C \setminus\mathbb R$ we
have~$(M(z))^* = (M(\bar z))$ and
$\im (z) \cdot \im(M(z)) > 0$.
\end{definition}

\begin{definition}
An extension~$\mathcal A$ of a closed densely defined
symmetric operator~$A$ is
called \emph{almost solvable (a.s.)} and denoted $\mathcal A = A_B$ if
there exist a boundary
triple~$\{\mathcal{K}, \Gamma_0, \Gamma_1 \}$ for~$A^*$
and a bounded operator~$B : \mathcal{K} \to \mathcal K$ defined
everywhere in $\mathcal K$ such that
\begin{equation*}
    f \in \dom A_B \iff  \Gamma_1 f =  B \Gamma_0 f
\end{equation*}
\end{definition}
\noindent
This definition implies the
inclusion~$\dom A_B \subset \dom A^*$
and that~$A_B$ is a restriction of~$A^*$ to
the linear set $\dom A_B := \{f \in \dom A^* : \Gamma_1 f =
B \Gamma_0 f\}$.
In this context, the operator~$B$ plays the role of a
parameter for the
family of extensions~$\{A_B \mid B : \mathcal K \to \mathcal K\}$.

It can be shown (see~\cite{CherKisSilva1} for references)
that if the
deficiency indices $n_\pm(A)$ are equal
and $A_B$ is an almost
solvable extension of~$A$, then
the resolvent set of~$A_B$ is
not empty (i.e. $A_B$ is maximal),
both $A_B$ and $(A_B)^* = A_{B^*}$
are restrictions of $A^*$ to their
domains, and
$A_B$ and $B$
are selfadjont (dissipative) simultaneously.
The spectrum of~$A_B$ coincides with the
set of points~$z_0 \in \mathbb C$ such that $(M(z_0) -B)^{-1}$
does not admit analytic continuation into it.

\subsubsection{Characteristic functions}\label{sec:char_functions_ext}

Assume that the parameter~$B$ of
an almost solvable extension $A_B$
is completely non-selfadjoint.
It can be represented as the sum of its real
and imaginary part
\[
 B = B_R + {\rm i}B_I, \quad B_R =B_R^*, \quad B_I = B_I^*
\]
These parts are well defined since $B$ is bounded.
The Green's formula implies that
for~$B_I \neq 0$ the imaginary part
of~$A_B$ (in the sense of its form) is non-trivial,
i.~e.,~$\im(A_B u, u) \neq 0$
at least for some $u \in\dom(A_B)$.
Hence~$A_B$ in this case is not a
self-adjoint operator.
It appears highly plausible that complete non-selfadjointness of~$A_B$
can be derived solely from complete non-selfadjointness of~$B$, assuming that~$A$
has no reducing self-adjoint parts.
However, no direct proof of this assertion
seems to be available in the existing
literature.

According to~(\ref{DissCharF}),
the characteristic function of~$B$
has the form
\[
 \Theta_B(z) = I_E + {\rm i}J\alpha(B^* - zI)^{-1}\alpha : E \to E,
 \quad z \in \rho(B^*),
\]
where $\alpha:=\sqrt{2|B_I|}$, $J:={\rm sign} B_I,$
and~$E:=\clos\ran(\alpha)$.
On the other hand, direct calculations according
to~\cite{Strauss1960} lead to the following
representation for the characteristic
function~$\Theta_{A_B}: E \to E$ of the
non-selfadjoint part of the extension~$A_B$
\[
\Theta_{A_B} = I_E + {\rm i}J \alpha (B^* - M(z))^{-1}\alpha,
\quad z \in \rho(A_B^*).
\]
These two formulae confirm an earlier observation
that goes back to B.~S.~Pavlov's work~\cite{Drogobych},
see~(\ref{CharF_h}) above.
The function~$\Theta_{A_B}$ is obtained
from $\Theta_B$ by  the
substitution of~$M(z)$ for $zI_E$,
$z\in \mathbb C_+$
\[
 \Theta_{A_B}(z) = \Theta_B(M(z)), \quad z \in \rho(A_B^*).
\]

Alongside $A_B$
introduce the dissipative almost solvable extension~$A_+$
parameterized by~$B_+:= B_R + {\rm i}|B_I|$.  Note that the characteristic function~$S$
of~$A_+$
is given by ({\it cf.} (\ref{DissCharF}))
\begin{equation}\label{eq:charf_ext}
 S(z) = I_E + {\rm i}\alpha(B_+^* - zI)^{-1}\alpha : E \to E,
 \quad z \in \mathbb{C}_+.
\end{equation}
%
%

%
%
Calculations of~\cite{MR2330831} (cf. \cite{MR0500225})  show that the
characteristic functions of $A_B$ and
$A_+$ are
related via an
operator linear-fractional transform known as
Potapov-Ginzburg transformation,
or PG-trans\-form~\cite{AzizovIokhvidov}.
This fact is essentially geometric.
It connects contractions on Kre\u{\i}n spaces
(i.e., the spaces with an indefinite metric defined
by the involution~$J = J^* = J^{-1}$) with contractions
on Hilbert spaces endowed with the regular metric.
The PG-trans\-form is invertible and
the following assertion pointed out in~\cite{MR0500225}
holds.
\begin{proposition}
\!\!The characteristic function~$\Theta_{A_B}$ is $J$\nobreakdash-contractive on its domain
and the PG\nobreakdash-trans\-form
maps it to the contractive characteristic
function $S$ of~$A_+$
as follows:
\begin{equation}\label{PGtransform}
\Theta_{A_B} \mapsto S = - (\chi^+ - \Theta_{A_B} \chi^-)^{-1}(\chi^- - \Theta_{A_B} \chi^+),
\qquad
S \mapsto \Theta_{A_B} = (\chi^- + \chi^+ S)(\chi^+ + \chi^- S)^{-1}
\end{equation}
where $\chi^\pm =(I_E \pm J)/2$ are orthogonal projections onto
subspaces of $\chi^+ E$ and  $\chi^- E$, respectively.
\end{proposition}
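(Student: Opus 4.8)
The plan is to reduce the two displayed formulas in~\eqref{PGtransform} to a single algebraic identity, verify that identity by a short resolvent computation, prove the $J$\nobreakdash-contractivity by a direct defect calculation, and then treat the invertibility of the operators being inverted as the one genuinely delicate point. First I would note that the two formulas are each equivalent to the single relation
\begin{equation}\label{keyid}
\Theta_{A_B}\,(\chi^+ + \chi^- S) = \chi^- + \chi^+ S :
\end{equation}
the second formula is exactly~\eqref{keyid} solved for $\Theta_{A_B}$, while the first, after clearing the inverse and using $\chi^++\chi^-=I_E$, $\chi^+-\chi^-=J$, $J^2=I_E$, rearranges into $\Theta_{A_B}(\chi^++\chi^- S)=\chi^+S+\chi^-=\chi^-+\chi^+S$, which is again~\eqref{keyid}. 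Thus it suffices to establish~\eqref{keyid} together with the invertibility of $\chi^+-\Theta_{A_B}\chi^-$ and $\chi^++\chi^- S$.

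To prove~\eqref{keyid} I would work pointwise, writing the common argument as $w=M(z)$ (the scalar case $w=zI_E$ underlying the $B$\nobreakdash-level statement being verbatim the same) and setting $R:=(B^*-w)^{-1}$, $R_+:=(B_+^*-w)^{-1}$, so that $\Theta_{A_B}=I_E+{\rm i}J\alpha R\alpha$ and $S=I_E+{\rm i}\alpha R_+\alpha$. Two facts would be recorded at the outset: (i) $\alpha$ and $J$ are both functions of $B_I$ and hence commute, so $\alpha\chi^\pm=\chi^\pm\alpha$ and $\alpha\chi^-\alpha=\alpha^2\chi^-$; and (ii) writing $B_I=J\lvert B_I\rvert$ and $\alpha^2=2\lvert B_I\rvert$ gives $B^*-B_+^*={\rm i}\lvert B_I\rvert(I_E-J)={\rm i}\alpha^2\chi^-$, whence the resolvent identity $R-R_+=R(B_+^*-B^*)R_+=-{\rm i}R\alpha^2\chi^-R_+$. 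Substituting the formulas for $\Theta_{A_B}$ and $S$ into~\eqref{keyid}, left\nobreakdash-multiplying by $J$ (which uses only $J^2=I_E$ and never commutes $J$ past $R$), and cancelling by means of $\chi^++\chi^-=I_E$, the identity collapses to ${\rm i}(R-R_+)=R\,\alpha\chi^-\alpha\,R_+$. By~(i) the right side is $R\alpha^2\chi^-R_+$ and by~(ii) the left side equals $R\alpha^2\chi^-R_+$ as well, so~\eqref{keyid} holds. The one thing to watch is that $B_R$ and $J$ need not commute; the computation above is arranged so that $J$ is moved only through $\alpha$ and through itself, never through $B_R$ or $R$.

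For the $J$\nobreakdash-contractivity I would compute $J-\Theta_{A_B}^*J\Theta_{A_B}$ directly. Using $J\Theta_{A_B}=J+{\rm i}\alpha R\alpha$, the relation $B-B^*={\rm i}\alpha^2 J$, and the identity $R^*-R=R^*\bigl[-{\rm i}\alpha^2 J+(M(z)^*-M(z))\bigr]R$ with $R^*=(B-M(z)^*)^{-1}$, the contributions proportional to $\alpha R^*\alpha^2 J R\alpha$ cancel exactly, leaving
\[
 J-\Theta_{A_B}^{*}\,J\,\Theta_{A_B}=2\,(R\alpha)^{*}\bigl(\im M(z)\bigr)(R\alpha)\ \ge\ 0,\qquad z\in\mathbb C_+,
\]
since $\im M(z)\ge 0$ as $M$ is an $R$\nobreakdash-function. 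This is precisely the asserted $J$\nobreakdash-contractivity (and, in the open upper half\nobreakdash-plane, its strict form together with the analogous computation for $\Theta_{A_B}^{*}$).

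The step I expect to be the main obstacle is the invertibility of the two factors being inverted, on which the very meaning of the transform depends. Here I would pass to the block decomposition $E=\chi^+E\oplus\chi^-E$: both $\chi^+-\Theta_{A_B}\chi^-$ and $\chi^++\chi^- S$ are block\nobreakdash-triangular with diagonal corners $I$ and, respectively, $-\chi^-\Theta_{A_B}\chi^-$ and $\chi^-S\chi^-$, so invertibility reduces to that of the lower\nobreakdash-right corner. This is the nontrivial point, and it is exactly where the hypotheses enter: the strict $J$\nobreakdash-contractivity of $\Theta_{A_B}$ just established (equivalently, complete non\nobreakdash-selfadjointness of $B$) forces $\chi^-\Theta_{A_B}\chi^-$ to be boundedly invertible on $\chi^-E$, and the abstract bijectivity of the Potapov--Ginzburg transform between $J$\nobreakdash-contractions and contractions~\cite{AzizovIokhvidov} then guarantees the corresponding corner for $S$ is invertible as well, making the transform a well\nobreakdash-defined bijection carrying the $J$\nobreakdash-contractive $\Theta_{A_B}$ to the contractive $S$. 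The residual bookkeeping---that every operator is read as a map $E\to E$, that $\alpha$ maps into $E$, and that $J=\mathrm{sign}(B_I)$ is a symmetry on $E$---is routine and I would dispatch it at the end.
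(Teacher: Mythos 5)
The paper itself offers no proof of this proposition: it is stated as a fact ``pointed out in \cite{MR0500225}'', with the supporting calculations deferred to \cite{MR2330831}. Your write-up therefore cannot be compared against an in-text argument, but judged on its own it is essentially a correct and self-contained derivation, and its structure (reduce both formulae to the single identity $\Theta_{A_B}(\chi^++\chi^-S)=\chi^-+\chi^+S$, verify it by the resolvent identity for $(B^*-w)^{-1}$ and $(B_+^*-w)^{-1}$ with $B_+^*-B^*=-{\rm i}\alpha^2\chi^-$, and compute the defect $J-\Theta_{A_B}^*J\Theta_{A_B}=2(R\alpha)^*\im M(z)(R\alpha)\ge 0$ for $z\in{\mathbb C}_+$) is exactly the kind of computation the cited sources carry out. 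I checked the two reductions and the two resolvent-identity cancellations; they are correct, and your care in moving $J$ only through $\alpha$ and $\chi^{\pm}$ (never through $B_R$ or the resolvents) is precisely the point where a sloppier argument would fail.

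The one place that needs repair is the justification of invertibility of the lower-right corner $\chi^-\Theta_{A_B}\chi^-$. One-sided $J$-contractivity, strict or not, only gives that this corner is bounded below (take $x=(0,x_-)$ in $\Theta^*J\Theta\le J$ to get $\|\Theta_{22}x_-\|\ge\|x_-\|$); in infinite dimensions surjectivity requires the \emph{bicontraction} property $\Theta_{A_B}J\Theta_{A_B}^*\le J$, which yields the same lower bound for $\Theta_{22}^*$. Your parenthetical ``analogous computation for $\Theta_{A_B}^*$'' supplies exactly this (one finds $J-\Theta_{A_B}J\Theta_{A_B}^*=2(R^*\alpha J)^*\im M(z)(R^*\alpha J)\ge0$), but the sentence as written attributes the invertibility to ``strict $J$-contractivity, equivalently complete non-selfadjointness of $B$'', neither of which is the correct hypothesis nor are they equivalent to each other. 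A cleaner route, which avoids the Kre\u{\i}n-space lemma altogether, is to observe that $\chi^+-\Theta_{A_B}\chi^-=J\bigl(I-{\rm i}\alpha R\alpha\chi^-\bigr)$ and to verify, using the same resolvent identity you already established, that $\bigl(I-{\rm i}\alpha R\alpha\chi^-\bigr)^{-1}=I+{\rm i}\alpha R_+\alpha\chi^-$ whenever $z\in\rho(A_B^*)\cap\rho(A_+^*)$; this exhibits the inverse explicitly and simultaneously settles the invertibility of $\chi^++\chi^-S$ via the identity you proved. With that adjustment the argument is complete.
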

\noindent

It appears somewhat unexpected that two operator-valued functions
connected by formulae~(\ref{PGtransform})   can be explicitly
written down in terms of their ``main
operators''~$A_{B}$ and $A_+$.
This relationship between the characteristic
functions of~$A_B$ and
$A_+$  goes in fact
much deeper, see~\cite{Arov,AzizovIokhvidov}.
In particular, the self-adjoint dilation
of~$A_+$
and the $J$-self-adjoint dilation of $A_B$ are also related via a suitably
adjusted version of the PG\nobreakdash-transform.
Similar statements hold for the corresponding
linear systems or ``generating operators'' of the
functions~$\Theta_{A_B}$ and $S$, cf.~\cite{Arov,AzizovIokhvidov}.
This fact is crucial for the construction of a model of a
general closed and densely defined
non-selfadjoint operator,
see~\cite{Ryzhov_closed}.
%

%

 %
%
%
%
%
%

\subsubsection{Functional model for a family of
extensions}\label{sect:BoundaryTriplesModel}

Formulae of the previous secion are essentially the same as the
formulae of \cite{MR0500225}
connecting characteristic functions of
non-dissipative and dissipative operators.
Reasoning by analogy, this suggests an existence
of certain identities that would connect the resolvents of $A_B$ and $A_+$
corresponding to
parameters~$B = B_R + iB_I$ and $B_+ = B_R + i|B_I|$.
Such identities indeed exist; they are the
celebrated Kre\u{\i}n formulae for
resolvents of two extensions of a symmetric operator.
Their variant is readily derived within the framework of boundary
triplets, see~\cite{MR2330831} for calculations,
where all details of the following results can also be found.
\smallskip

The functional model of the dissipative extension~$A_+$ begins
with the derivation of its minimal
selfadjoint dilation~$\mathscr A$.
It is constructed following the recipe of
B. Pavlov~\cite{MR0365199,Drogobych,MR0510053}
and takes a form quite
similar to (\ref{eqn:ExtensionsDomain}), (\ref{eqn:ExtensionsAction})
\[
\mathscr A  \begin{pmatrix}
                v_- \\ u \\ v_+
             \end{pmatrix} =
             \begin{pmatrix}
              {\rm i}v^\prime_- \\[0.3em] A_+^* u \\[0.3em] {\rm i}v^\prime_+
             \end{pmatrix}, \quad
              \begin{pmatrix}
                v_- \\ u \\ v_+
             \end{pmatrix} \in \dom(\mathscr A)
\]
where $\dom(\mathscr A)$ consists of vectors~$(v_-,u,v_+) \in \mathcal H =
\mathcal D_- \oplus H\oplus \mathcal D_+$, with $v_\pm \in W_2^1(\mathbb R_\pm,
E)$, $u \in \dom(A_+^*)$ under  two ``boundary
conditions'' imposed on $v_\pm$ and $u$:
\[
 \Gamma_1 u - B_+ \Gamma_0 u = \alpha v_-(0), \quad
 \Gamma_1 u - B_+^* \Gamma_0 u = \alpha v_+(0)
\]

The functional model construction for  $A_+$ follows the recipe
by S.~Naboko~\cite{MR573902}. The following theorem holds.
\begin{theorem}
	\label{ModelTheoremDiss}
There exists a  mapping $\Phi$ from the
dilation space~$\mathcal H$ onto  Pavlov's model space~$\mathscr H$
defined by~(\ref{ModelSpace})
with the following properties
\begin{enumerate}
  \item  
    $\Phi$ is  isometric.
  \item  
    $\tilde g +  S^* g = \mathscr F_+  h$,
    $  S \tilde g + g = \mathscr F_-  h$, where
    $\binom{\tilde g}{g} = \Phi  h$, $ h \in \mathcal H$
  \item
    $
   \Phi \circ (\mathscr L - zI )^{-1} = (k - z)^{-1} \circ \Phi,
   \quad z \in \mathbb C \setminus\mathbb R
    $
  \item
    $ \Phi\mathcal H  = \mathscr H$, \quad $\Phi \mathcal D_\pm = {\mathscr
D}_\pm$, \quad $\Phi \mathcal K = \mathscr K$
  \item
    $\mathscr F_\pm \circ (\mathscr L - zI)^{-1} = (k - z)^{-1} \circ
    \mathscr F_\pm,
     \quad z\in \mathbb C \setminus \mathbb R$.
\end{enumerate}
where bounded maps~$\mathscr F_\pm : \mathcal H \to L^2(\mathbb R, E)$
are defined by the formulae
\begin{align*}
  \mathscr F_+ : h &  \mapsto - \frac{1}{\sqrt{2\pi}}\,
 \alpha \Gamma_0( A_+ - k + {\rm i}0)^{-1}
u +
 S_+^*(k) {\hat{v}}_-(k) +    \hat{v}_+(k),
  \\
  \mathscr F_- : h &  \mapsto - \frac{1}{\sqrt{2\pi}}\,
   \alpha \Gamma_0(A_+^*-k-{\rm i}0)^{-1}
u + {\hat{v}}_-(k) + S_+(k) \hat{v}_+(k),
 \end{align*}
where $h = (v_-, u, v_+) \in \mathcal H$ and $\hat v_\pm$ are the Fourier
transforms of $v_\pm \in L^2(\mathbb R_\pm, E)$.
%
\end{theorem}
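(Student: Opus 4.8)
This is a spectral-representation statement for the self-adjoint dilation $\mathscr A$ of $A_+$ (the operator written $\mathscr L$ in properties 3 and 5): it asserts that $\Phi$ carries $\mathscr A$ to multiplication by the independent variable $k$ on Pavlov's model space $\mathscr H$. The organising remark is that the weight $\left(\begin{smallmatrix} I & S^* \\ S & I\end{smallmatrix}\right)$ appearing in (\ref{ModelSpace}) is precisely the Gram operator of $\mathscr H$, and property 2 reads $\left(\begin{smallmatrix}\mathscr F_+ h \\ \mathscr F_- h\end{smallmatrix}\right)=\left(\begin{smallmatrix} I & S^* \\ S & I\end{smallmatrix}\right)\Phi h$. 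The plan is therefore to construct the two maps $\mathscr F_\pm$ first and to define $\Phi h$ as the unique class in $\mathscr H$ whose image under the weight equals $(\mathscr F_+h,\mathscr F_-h)$; the alternative expressions for $\|\cdot\|_{\mathscr H}$ recorded after (\ref{ModelSpace}) recover $g$ and $\tilde g$ from $\mathscr F_\pm h$ on $\ran\Delta_*$ and $\ran\Delta$ and guarantee that this class is well defined. The preliminary point is that $\mathscr F_\pm\colon\mathcal H\to L^2(\mathbb R,E)$ are bounded: on the channels $\mathcal D_\pm=L^2(\mathbb R_\pm,E)$, where $\mathscr A$ acts as $\mathrm i\,d/dx$, they reduce to the Fourier transform $v_\pm\mapsto\hat v_\pm$, so only the inner term $\alpha\Gamma_0(A_+-k\mp\mathrm i0)^{-1}u$ requires the boundary-value (smooth-vector) bound $\int_{\mathbb R}\|\alpha\Gamma_0(A_+-k-\mathrm i0)^{-1}u\|_E^2\,dk\le C\|u\|_H^2$, which follows from the $R$-function structure $\im M(z)>0$ together with the representation (\ref{eq:charf_ext}) of $S$ through $\alpha$ and $(B_+^*-M(z))^{-1}$.

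For properties 3 and 5 I would start from a Kre\u{\i}n-type resolvent formula for $\mathscr A$, derived within the boundary-triple framework (cf.\ \cite{MR2330831}), that expresses $(\mathscr A-z)^{-1}$ through the channel resolvents of $\mathrm i\,d/dx$, the resolvents $(A_+-z)^{-1}$ and $(A_+^*-z)^{-1}$, and the boundary maps $\Gamma_{0},\Gamma_1$. Feeding this into the explicit formulae for $\mathscr F_\pm$, the channel terms intertwine $(\mathscr A-z)^{-1}$ with the Fourier multiplier $(k-z)^{-1}$ by elementary harmonic analysis, while the inner term does so by the first resolvent identity for $A_+$ combined with the Cauchy-kernel behaviour of $k\mapsto\Gamma_0(A_+-k\mp\mathrm i0)^{-1}$; this gives property 5. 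Property 3 is then immediate from property 2, since the scalar factor $(k-z)^{-1}$ commutes with the constant-in-$k$ weight $\left(\begin{smallmatrix} I & S^* \\ S & I\end{smallmatrix}\right)$, so $\Phi$ inherits the intertwining directly from $\mathscr F_\pm$.

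The crux is the isometry, property 1, and this is the step I expect to be the main obstacle. I would compute the $\mathcal H$-norm through the absolutely continuous spectral density of the dilation, $\|h\|_{\mathcal H}^2=\lim_{\varepsilon\downarrow0}\tfrac1\pi\int_{\mathbb R}\im\langle(\mathscr A-k-\mathrm i\varepsilon)^{-1}h,h\rangle\,dk$, and expand the right-hand side with the explicit resolvent of $\mathscr A$. The pure-channel contributions reproduce $\|\hat v_-\|^2+\|\hat v_+\|^2$ by Plancherel, and the difficulty is that the remaining inner and cross terms must be reorganised into exactly the weighted form $\langle\left(\begin{smallmatrix} I & S^* \\ S & I\end{smallmatrix}\right)\Phi h,\Phi h\rangle$ of (\ref{ModelSpace}). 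This reconciliation of the spectral density of $\mathscr A$ with the Pavlov weight is where the specific algebra of the characteristic function is indispensable: one uses the identities $I-S^*S=\Delta^2$ and $I-SS^*=\Delta_*^2$ together with the relation between $\im M$, $\alpha$ and $S$ coming from (\ref{eq:charf_ext}), and it is most cleanly packaged through the two alternative expressions for $\|\cdot\|_{\mathscr H}$ displayed after (\ref{ModelSpace}).

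Property 4 I would obtain last. The channel identities are explicit: for $h=(v_-,0,0)$ the formulae give $\mathscr F_+h=S^*\hat v_-$ and $\mathscr F_-h=\hat v_-$, whence $\Phi h=\binom{0}{\hat v_-}$, and for $h=(0,0,v_+)$ they give $\Phi h=\binom{\hat v_+}{0}$; since the Fourier transforms of $L^2(\mathbb R_\pm,E)$-functions fill the Hardy classes $H^2_\pm(E)$, this yields $\Phi\mathcal D_\pm=\mathscr D_\pm$ exactly. Because $\Phi$ is isometric and $\mathcal K=\mathcal H\ominus(\mathcal D_-\oplus\mathcal D_+)$, it follows that $\Phi\mathcal K\subseteq\mathscr H\ominus(\mathscr D_-\oplus\mathscr D_+)=\mathscr K$, and the reverse inclusion, hence the surjectivity $\Phi\mathcal H=\mathscr H$ and the identity $\Phi\mathcal K=\mathscr K$, follows from the minimality of the dilation $\mathscr A$ (Section \ref{sec:pavl-funct-model}) together with the already-established isometry and intertwining, exactly as in Naboko's scheme \cite{MR573902,MR2330831}.
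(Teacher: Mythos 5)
The paper itself gives no proof of this theorem: it is stated as an instance of Naboko's recipe \cite{MR573902}, with the calculations deferred to \cite{MR2330831}. Your proposal reconstructs that recipe essentially correctly in its architecture: the boundedness of $\mathscr F_\pm$ via the Green's-formula estimate for $\alpha\Gamma_0(A_+-z)^{-1}$, the intertwining of $\mathscr F_\pm$ with the resolvent through a Kre\u{\i}n-type formula for the dilation, the Parseval identity obtained from the spectral density of $\mathscr A$ and repackaged using $I-S^{*}S=\Delta^{2}$ and $I-SS^{*}=\Delta_*^{2}$, the explicit channel images $\Phi\mathcal D_\pm=\mathscr D_\pm$, and surjectivity from complete non-selfadjointness/minimality are exactly the ingredients of the cited construction.

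The one step that does not work as written is your definition of $\Phi$. You define $\Phi h$ as the unique class whose image under the weight equals $(\mathscr F_+h,\mathscr F_-h)$ and appeal to the alternative norm expressions for well-definedness. Those give \emph{uniqueness} of such a class, but not \emph{existence}: the Gram operator $W=\left(\begin{smallmatrix}I&S^{*}\\ S&I\end{smallmatrix}\right)$, extended to the completion $\mathscr H$, is injective but its range in $L^2(E)\oplus L^2(E)$ is in general a non-closed proper subspace --- solving $(I-S^{*}S)\tilde g=\mathscr F_+h-S^{*}\mathscr F_-h$ and $(I-SS^{*})g=\mathscr F_-h-S\mathscr F_+h$ requires the right-hand sides to lie in the corresponding ranges --- so the assertion that $(\mathscr F_+h,\mathscr F_-h)\in\ran W$ is itself equivalent in strength to part of the theorem and cannot serve as a definition. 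The standard repair, and what \cite{MR573902,MR2330831} actually do, is to write the pair $(\tilde g,g)$ explicitly on a dense set of $h=(v_-,u,v_+)$; for instance $g=\hat v_-$ and $\tilde g=\hat v_+-\tfrac{1}{\sqrt{2\pi}}\,\alpha\Gamma_0(A_+-k+{\rm i}0)^{-1}u$ is consistent with property 2 by virtue of the boundary identity $\alpha\Gamma_0(A_+^{*}-k-{\rm i}0)^{-1}=S(k)\,\alpha\Gamma_0(A_+-k+{\rm i}0)^{-1}$ (the boundary-value form of the resolvent identity relating $A_+$ and $A_+^{*}$ through $S$); with that in place your isometry computation and the density argument go through unchanged. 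A minor slip: the weight is not ``constant in $k$'' (the symbol $S=S(k)$ depends on $k$); your deduction of property 3 from properties 2 and 5 survives only because the scalar factor $(k-z)^{-1}$ commutes with every multiplication operator, constant or not.
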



Using the
Kre\u{\i}n formulae, which play the same role as
Hilbert resolvent identities in the case of additive perturbations,
one can obtain results similar to those of \cite{MR573902} and obtain an explicit description of $(A_+-z)^{-1}$ in the functional model representation.
An analogue of  this result
for more general extensions of~$A$
corresponding to a choice of parameter~$B$ in the
form~$B_R+\alpha\varkappa\alpha/2$
with bounded~$\varkappa:E\to E$ and~$B_R = B_R^*$
is proven along the same lines.
This program was realized in~\cite{Ryzhov_closed} for a particular
case~$\varkappa = {\rm i}J$ and in~\cite{CherKisSilva1}
for the family of
extensions~$A_\varkappa$ parameterized
by~$B_\varkappa =\alpha \varkappa\alpha/2.$
The latter form of extension parameter
utilizes the possibility of ``absorbing''
the part~$B_R= B_R^*$ into
the map~$\Gamma_1$, i.~e., passing from the
boundary triple~$\{\mathcal K, \Gamma_0, \Gamma_1\}$
to the triple~$\{\mathcal K,
\Gamma_0, \Gamma_1 + B_R \Gamma_0\}$.

\subsubsection{Smooth vectors and the absolutely continuous subspace}
\label{sec:smooth-vectors-scatt}
Here we characterise the absolutely continuous spectral subspace for an almost solvable extension of a densely defined symmetric operator with equal (possibly infinite) deficiency indices. The procedure we follow is heavily influenced by the ideas of Sergey Naboko, see \cite{MR573902, MR1252228} and is carried out essentially in parallel to the exposition of \cite{MR573902}. In contrast to the mentioned works, dealing with additive perturbations of self-adjoint operators, we are dealing with the case of extensions, self-adjoint and non-self-adjoint alike. The narrative below follows the argument presented in our papers \cite{CherednichenkoKiselevSilva,CherKisSilva1}.

Since we are not limiting the consideration to the case of self-adjoint operators, we first require the notion of the absolutely continuous spectral subspace applicable in the non-self-adjoint setup. In the functional model space $\mathscr{H}$ introduced in Section \ref{sec:pavlov_model} constructed based on the characteristic function $S(z)$ introduced in Section \ref{sec:char_functions_ext}
consider two subspaces $\mathscr{N}^\varkappa_\pm$ defined as follows:
\begin{equation*}
   \mathscr{N}^\varkappa_\pm:=\left\{\binom{\widetilde{g}}{g}\in\mathscr{H}:
  P_\pm\left(\chi_\varkappa^+(\widetilde{g}+S^*g)+\chi_\varkappa^-(S\widetilde{g}
  +g)\right)=0\right\},
\end{equation*}
where \begin{equation*}
  \chi_\varkappa^\pm:=\frac{I\pm{\rm i}\varkappa}{2}.
\end{equation*}
and $P_\pm$ are orthogonal projections onto their respective Hardy classes, as above.

These subspaces have a characterisation in terms of the resolvent of the operator $A_\varkappa.$ This, again, can be seen as a consequence of a much more general argument (see {\it e.g.} \cite{Ryzhov_closed, Ryzh_ac_sing}). 
\begin{theorem}
  \label{lem:similar-to-naboko-thm-4}
Suppose that $\ker \alpha =0.$ The following characterisation holds:
\begin{equation*}
 \mathscr{N}^\varkappa_\pm=\left\{\binom{\widetilde{g}}{g}\in\mathscr{H}:
  \Phi(A_{\varkappa}-z I)^{-1}\Phi^*P_K\binom{\widetilde{g}}{g}=
P_K\frac{1}{k-z}\binom{\widetilde{g}}{g}
\text{ for all } z\in\complex_\pm\right\}\,.
\end{equation*}
Here $\Phi$  denotes the unitary mapping of the dilation space $\mathcal{H}$ onto $\mathscr{H}$, as above.
\end{theorem}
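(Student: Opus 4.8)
The plan is to derive a closed expression for the compressed resolvent $\Phi(A_{\varkappa}-zI)^{-1}\Phi^{*}P_{K}$ acting in the model space $\mathscr{H}$, and to read off from it the exact obstruction to its coinciding with the multiplication operator $P_{K}(k-z)^{-1}$. The two ingredients are Theorem~\ref{ModelTheoremDiss}, which diagonalises the dilation $\mathscr{L}$ of the dissipative companion $A_{+}$, and the Kre\u{\i}n resolvent formula in the boundary-triple form of \cite{MR2330831}, which expresses $(A_{\varkappa}-zI)^{-1}$ through $(A_{+}-zI)^{-1}$ and the Weyl function $M(z)$. Throughout I absorb $B_{R}$ into $\Gamma_{1}$, so that $B_{\varkappa}=\alpha\varkappa\alpha/2$ and the dissipative companion corresponds to $\varkappa=\mathrm{i}I_{E}$.

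First I would dispose of the dissipative extension $A_{+}$. Combining the dilation identity $(A_{+}-zI)^{-1}=P(\mathscr{L}-zI)^{-1}|_{K}$, valid for $z\in\mathbb{C}_{-}$ (here $P$ is the projection of $\mathcal{H}$ onto $K$), with property~(3) of Theorem~\ref{ModelTheoremDiss} and the intertwining $\Phi P=P_{K}\Phi$, one obtains at once
\[
\Phi(A_{+}-zI)^{-1}\Phi^{*}P_{K}\binom{\tilde g}{g}=P_{K}\frac{1}{k-z}\,P_{K}\binom{\tilde g}{g},\qquad z\in\mathbb{C}_{-},
\]
with the mirror identity for $A_{+}^{*}$ and $z\in\mathbb{C}_{+}$. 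Thus for $A_{+}$ the multiplication property already holds automatically in one half-plane; the entire content of the theorem lies in the passage to a general parameter $\varkappa$.

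Next I would insert the Kre\u{\i}n formula. Since $B_{\varkappa}-B_{+}=\tfrac12\alpha(\varkappa-\mathrm{i}I_{E})\alpha$, the difference $(A_{\varkappa}-zI)^{-1}-(A_{+}-zI)^{-1}$ is a rank-type term carrying the factor $(B_{\varkappa}-M(z))^{-1}\alpha(\varkappa-\mathrm{i}I_{E})\alpha(B_{+}-M(z))^{-1}$ between the appropriate gamma-fields (solution operators). Transporting this correction to the model by means of the maps $\mathscr{F}_{\pm}$ (properties~(2) and~(5) of Theorem~\ref{ModelTheoremDiss}), using the boundary-value relation between $M(z)$ and the characteristic function $S$, and substituting $\tilde g+S^{*}g=\mathscr{F}_{+}h$, $S\tilde g+g=\mathscr{F}_{-}h$, the correction acquires the shape of the Cauchy transform, evaluated at $z$, of the single $E$-valued function
\[
\Psi:=\chi_{\varkappa}^{+}(\tilde g+S^{*}g)+\chi_{\varkappa}^{-}(S\tilde g+g)=\chi_{\varkappa}^{+}\mathscr{F}_{+}h+\chi_{\varkappa}^{-}\mathscr{F}_{-}h.
\]

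The final step is the Hardy-class dichotomy. By the standard Riesz-projection criterion, the Cauchy transform of $\Psi\in L^{2}(E)$ vanishes identically throughout $\mathbb{C}_{+}$ exactly when $P_{+}\Psi=0$, i.e.\ $\Psi\in H^{2}_{-}(E)$, and throughout $\mathbb{C}_{-}$ exactly when $P_{-}\Psi=0$. Hence the correction vanishes for all $z\in\mathbb{C}_{\pm}$ if and only if $P_{\pm}\Psi=0$, which is precisely the defining condition of $\mathscr{N}^{\varkappa}_{\pm}$; this yields both inclusions simultaneously. The main obstacle is the middle step: obtaining the correction in closed form and recognising it as the Riesz projection of $\Psi$ requires careful bookkeeping of the Kre\u{\i}n formula in model coordinates and of the boundary behaviour of $S$ on $\mathbb{R}$. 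The hypothesis $\ker\alpha=0$ enters decisively here, guaranteeing that $\alpha$ is injective with dense range in $E$ so that the coefficient multiplying the Cauchy transform is non-degenerate; without it one would recover only a proper subspace of the stated Hardy condition.
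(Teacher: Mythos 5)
Your overall strategy is the right one and is essentially the argument that the paper itself points to (Naboko's scheme from \cite{MR573902}, adapted to extensions of symmetric operators in \cite{CherednichenkoKiselevSilva,CherKisSilva1}): express the compressed resolvent of $A_\varkappa$ in the model via the Kre\u{\i}n formula relative to the dissipative companion, identify the deviation from multiplication by $(k-z)^{-1}$ as an injective operator coefficient applied to the Cauchy transform of $\Psi:=\chi_\varkappa^+(\widetilde g+S^*g)+\chi_\varkappa^-(S\widetilde g+g)$, and conclude by the Riesz-projection dichotomy, with $\ker\alpha=0$ guaranteeing that the coefficient (which carries a factor of $\alpha$ next to the Cauchy transform) annihilates nothing. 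The survey does not reproduce this computation, deferring to the cited works, and your sketch matches their architecture.

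There is, however, one bookkeeping error that would derail the middle step if carried out literally. The dilation identity together with property (3) of Theorem \ref{ModelTheoremDiss} gives $\Phi(A_+-zI)^{-1}\Phi^*P_K=P_K(k-z)^{-1}P_K$ for $z\in{\mathbb C}_-$, whereas the right-hand side of the theorem is $P_K(k-z)^{-1}$ \emph{without} the inner projection. These are not equal: for $z\in{\mathbb C}_-$ the multiplication by $(k-z)^{-1}$ does not preserve ${\mathscr D}_-=\binom{0}{H^2_-(E)}$, and a short computation with \eqref{2.1.3.2} shows that $P_K(k-z)^{-1}(I-P_K)\binom{\widetilde g}{g}$ contributes a term proportional to the value at $z$ of the analytic continuation of $P_-(S\widetilde g+g)$. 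Hence the multiplication property does \emph{not} ``hold automatically'' for $A_+$ on all of $\mathscr H$; it holds precisely on ${\mathscr N}^{{\rm i}I}_-$, which is the $\varkappa={\rm i}I$ instance of the theorem. This mismatch term is exactly what supplies the $\chi_\varkappa^-(S\widetilde g+g)$ (respectively, for $z\in{\mathbb C}_+$, the $\chi_\varkappa^+(\widetilde g+S^*g)$) portion of $\Psi$, while the Kre\u{\i}n correction, which carries the factor $\varkappa-{\rm i}I$, can only produce the difference $\Psi-(S\widetilde g+g)=\chi_\varkappa^+\bigl[(\widetilde g+S^*g)-(S\widetilde g+g)\bigr]$ (note that $\chi_\varkappa^+=\tfrac{{\rm i}}{2}(\varkappa-{\rm i}I)$). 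Tracking only the Kre\u{\i}n term therefore yields the wrong Hardy condition; keeping both contributions, your argument assembles the full $\Psi$ and closes as you describe.
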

Consider the counterparts of $\mathscr{N}^\varkappa_\pm$ in the original
Hilbert space $H:$
\begin{equation*}
  \widetilde{N}_\pm^\varkappa:=\Phi^*P_K\mathscr{N}^\varkappa_\pm\,,
\end{equation*}
which are linear sets albeit not necessarily subspaces.
In a way similar to \cite{MR573902}, one introduces the set
\begin{equation*}
  \widetilde{N}_{\rm e}^\varkappa:=\widetilde{N}_+^\varkappa\cap
 \widetilde{N}_-^\varkappa
\end{equation*}
of so-called \emph{smooth vectors} and its closure $N_{\rm e}^\varkappa:=\clos(\widetilde{N}_{\rm e}^\varkappa).$



The next assertion ({\it cf. e.g.} \cite{Ryzhov_closed, Ryzh_ac_sing}, for the case of general non-selfadjoint operators), is an alternative non-model characterisation of the linear sets
$\widetilde{N}_\pm^\varkappa$.
\begin{theorem}
  \label{lem:on-smooth-vectors-other-form}
The sets $\widetilde{N}_\pm^\varkappa$ are described as follows:
\begin{equation*}
\widetilde{N}_\pm^\varkappa=\{u\in\cH: \alpha\Gamma_0(A_{\varkappa}-z
I)^{-1}u\in H^2_\pm(E)\}.
\end{equation*}
\end{theorem}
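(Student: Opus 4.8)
The plan is to prove the stated set equality as a chain of equivalences, translating the model-space condition defining $\mathscr{N}_\pm^\varkappa$ into a condition on the resolvent of $A_\varkappa$ acting in the original space, and then recognising the latter as membership of $\alpha\Gamma_0(A_\varkappa-zI)^{-1}u$ in the Hardy class $H^2_\pm(E)$. Since $\Phi$ is unitary and $\Phi\mathcal K=\mathscr K$ (Theorem~\ref{ModelTheoremDiss}(4)), unfolding the definition $\widetilde N_\pm^\varkappa=\Phi^*P_K\mathscr N_\pm^\varkappa$ shows that $u\in\widetilde N_\pm^\varkappa$ precisely when $u=\Phi^*P_K\binom{\widetilde g}{g}$ for some $\binom{\widetilde g}{g}\in\mathscr H$ subject to $P_\pm\bigl(\chi_\varkappa^+(\widetilde g+S^*g)+\chi_\varkappa^-(S\widetilde g+g)\bigr)=0$. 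The argument mirrors Naboko's treatment of additive perturbations in \cite{MR573902}, the only new feature being the passage from the dissipative operator $A_+$, whose characteristic function $S$ underlies $\mathscr H$, to the general extension $A_\varkappa$.

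First I would express the defining combination through the maps of Theorem~\ref{ModelTheoremDiss}. By item~(2), $\widetilde g+S^*g=\mathscr F_+h$ and $S\widetilde g+g=\mathscr F_-h$ for $h=\Phi^*\binom{\widetilde g}{g}$, so the bracket equals $\chi_\varkappa^+\mathscr F_+h+\chi_\varkappa^-\mathscr F_-h$. The explicit formulae for $\mathscr F_\pm$ give the interior contribution in terms of $\alpha\Gamma_0(A_+-k\mp{\rm i}0)^{-1}$, i.e.\ in terms of the \emph{dissipative} operator $A_+$; to reach $A_\varkappa$ I would invoke the Kre\u{\i}n resolvent formula within the boundary triple, which, with the reference extension $A_0:=A^*|_{\ker\Gamma_0}$ and the $\gamma$-field $\gamma(z)$ (satisfying $\Gamma_0\gamma(z)=I$ and $\ran\gamma(z)\subset\ker(A^*-zI)$), yields $\alpha\Gamma_0(A_\varkappa-zI)^{-1}u=\alpha(B_\varkappa-M(z))^{-1}\gamma(\bar z)^*u$ with $B_\varkappa=\alpha\varkappa\alpha/2$, and the analogous identity for $A_+$. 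The algebraic passage from $(B_+-M(z))^{-1}$ to $(B_\varkappa-M(z))^{-1}$ is exactly the resolvent-level counterpart of the Potapov--Ginzburg transform \eqref{PGtransform}, and it is this step that absorbs the weights $\chi_\varkappa^\pm$ and replaces $A_+$ by $A_\varkappa$. The upshot I am after is that, up to the constant $-1/\sqrt{2\pi}$, the model combination $\chi_\varkappa^+(\widetilde g+S^*g)+\chi_\varkappa^-(S\widetilde g+g)$ is the boundary data on $\mathbb R$ of the $E$-valued function $z\mapsto\alpha\Gamma_0(A_\varkappa-zI)^{-1}u$.

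With this identification, I would close the equivalence using the standard Hardy dichotomy: the vanishing of a one-sided Hardy projection of the boundary data is equivalent to analytic continuability of the associated function into $\complex_\pm$ as an $H^2_\pm(E)$ element. Tracing the directions through the intertwining by $\mathscr F_\pm$ (and cross-checking against Theorem~\ref{lem:similar-to-naboko-thm-4}, which already asserts that on $\widetilde N_\pm^\varkappa$ the resolvent acts as multiplication by $(k-z)^{-1}$), the condition $P_\pm(\cdots)=0$ defining $\mathscr N_\pm^\varkappa$ becomes precisely $\alpha\Gamma_0(A_\varkappa-zI)^{-1}u\in H^2_\pm(E)$. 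The hypothesis $\ker\alpha=0$ enters here to guarantee that $E=\clos\ran\alpha$ and that $u\mapsto\alpha\Gamma_0(A_\varkappa-zI)^{-1}u$ loses no information, so that the correspondence $u\leftrightarrow\binom{\widetilde g}{g}$ is faithful.

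The hard part will be the analytic bookkeeping behind this last equivalence rather than any single algebraic identity. The functions $\alpha\Gamma_0(A_\varkappa-zI)^{-1}u$ are a priori defined and analytic only on $\rho(A_\varkappa)$, so one must justify that the one-sided model condition forces the requisite analyticity throughout the half-plane together with the correct $L^2$ boundary behaviour, controlling the nontangential boundary values of $S$ (which need only be contractive, not inner) and of the Cauchy integrals involved, and aligning the Hardy projections $P_\pm$ with the half-plane of analyticity. The resolvent-level Potapov--Ginzburg identity, while algebraically routine, must moreover be established first on a dense set and then extended by continuity, which is exactly where $\ker\alpha=0$ and the mapping properties of $\mathscr F_\pm$ are used most delicately.
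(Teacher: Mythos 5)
The paper itself offers no proof of this statement---it is quoted from \cite{Ryzhov_closed, Ryzh_ac_sing} and the authors' papers \cite{CherednichenkoKiselevSilva, CherKisSilva1}---so your proposal can only be measured against the argument of those sources, whose skeleton (Theorem \ref{ModelTheoremDiss}, the maps $\mathscr F_\pm$, the Kre\u{\i}n formula, Hardy-class analysis) you have correctly identified. However, the central analytic step as you describe it is not right, and the chain of equivalences would not close as written. The combination $\chi_\varkappa^+(\widetilde g+S^*g)+\chi_\varkappa^-(S\widetilde g+g)$ is \emph{not} the boundary data of $z\mapsto\alpha\Gamma_0(A_\varkappa-zI)^{-1}u$: by Theorem \ref{ModelTheoremDiss}(2) and the formulae for $\mathscr F_\pm$, for the canonical representative in $\mathscr K$ it is a $\chi_\varkappa^\pm$-weighted mixture of the boundary values of \emph{two different} functions taken from \emph{two different} half-planes, namely $\alpha\Gamma_0(A_+-z)^{-1}u$ from ${\mathbb C}_-$ (which lies in $H^2_-(E)$ by the dilation construction) and $\alpha\Gamma_0(A_+^*-z)^{-1}u$ from ${\mathbb C}_+$ (which lies in $H^2_+(E)$). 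Hence $P_-$ of that combination is simply $\chi_\varkappa^+$ applied to the first summand and $P_+$ is $\chi_\varkappa^-$ applied to the second; feeding this into your ``Hardy dichotomy'' ($P_\mp\rho=0$ iff the associated function continues into ${\mathbb C}_\pm$ as an $H^2_\pm$ element) either trivialises the condition or pairs $\widetilde N_\pm^\varkappa$ with $H^2_\mp(E)$, the opposite of what is claimed.

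The step that actually carries the proof, and which your outline passes over, is the elimination of the existential quantifier hidden in $\Phi^*P_K$: one has $u\in\widetilde N_-^\varkappa$ iff \emph{some} representative in the fibre over $\Phi u$---equivalently, some choice of the free channel components $\hat v_\pm\in H^2_\pm(E)$ of $h=(v_-,u,v_+)$---annihilates $P_-$ of the combination. Carrying this out, the $\hat v_+$ contribution drops out of $P_-$ (since $S H^2_+\subset H^2_+$), and the condition reduces to the solvability in $H^2_-(E)$ of
\begin{equation*}
\bigl(\chi_\varkappa^+S^*+\chi_\varkappa^-\bigr)\hat v_-=\tfrac{1}{\sqrt{2\pi}}\,\chi_\varkappa^+\,\alpha\Gamma_0(A_+-k+{\rm i}0)^{-1}u,
\end{equation*}
that is, to the membership in $H^2_-(E)$ of $\bigl(\chi_\varkappa^+S^*(z)+\chi_\varkappa^-\bigr)^{-1}\chi_\varkappa^+\alpha\Gamma_0(A_+-z)^{-1}u$; only at this point does the Kre\u{\i}n formula (equivalently, the resolvent-level Potapov--Ginzburg identity you invoke) enter, to identify this quotient with a multiple of $\alpha\Gamma_0(A_\varkappa-zI)^{-1}u$ on ${\mathbb C}_-$, with the mirror argument for the $+$ case. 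The mechanism is thus a division problem in Hardy classes keyed on the free incoming/outgoing components, not a Riesz-projection/Cauchy-transform dichotomy applied to a single boundary function; without this reorganisation your argument does not go through.
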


Moreover, one shows  that for the functional model image of $\tilde{N}^\varkappa_{\rm e}$ the following representation holds:
\begin{align}
&\Phi\widetilde{N}^\varkappa_{\rm e}=\biggl\{P_K\binom{\widetilde{g}}{g}\in\mathscr{H}:\nonumber
\\
&\binom{\widetilde{g}}{g}\in\mathscr{H}\ {\rm satisfies}\
  \Phi(A_{\varkappa}-z I)^{-1}\Phi^*P_K\binom{\widetilde{g}}{g}=
P_K\frac{1}{k-z}\binom{\widetilde{g}}{g}\ \ \ \forall\,z\in{\mathbb C}_-\cup{\mathbb C}_+\biggr\},
\label{New_Representation}
\end{align}
which motivates the term ``the set of smooth vectors'' used for $\widetilde{N}^\varkappa_{\rm e}$.
(Note that the inclusion of the right-hand side of (\ref{New_Representation}) into $\Phi\tilde{N}^\varkappa_{\rm e}$ follows immediately from Theorem \ref{lem:similar-to-naboko-thm-4}.)


The above Theorem together with Theorem \ref{thm:on-smooth-vectors-a.c.equality} below motivates generalising the notion of the absolutely continuous subspace $\cH_{\rm ac}(A_{\varkappa})$ to the case of non-selfadjoint extensions $A_\varkappa$ of a symmetric operator $A,$ by identifying it with the set $N^\varkappa_{\rm e}.$ This generalisation follows in the footsteps of the corresponding definition by Naboko \cite{MR573902} in the case of additive perturbations (see also \cite{Ryzhov_closed, Ryzh_ac_sing} for the general case).
\begin{definition}
\label{abs_cont_subspace}
For a symmetric operator $A,$ in the case of a non-selfadjoint extension $A_\varkappa$ the absolutely continuous subspace $\cH_{\rm ac}(A_{\varkappa})$  is defined by the formula $\cH_{\rm ac}(A_{\varkappa}):=N^\varkappa_{\rm e}.$

In the case of a self-adjoint extension $A_\varkappa$, we understand $\cH_{\rm ac}(A_{\varkappa})$ in the sense of the classical definition of the absolutely continuous subspace of a self-adjoint operator.
\end{definition}

It turns out that in the case of self-adjoint extensions a rather mild additional condition guarantees that the non-self-adjoint definition above is equivalent to the classical self-adjoint one. Namely, we have the following

\begin{theorem}
  \label{thm:on-smooth-vectors-a.c.equality}
  Assume that $\varkappa=\varkappa^*,$  ${\rm ker}(\alpha)=\{0\}$
  and let  $\alpha\Gamma_0(A_{\varkappa}-z I)^{-1}$ be a Hilbert-Schmidt
  operator for at least one point $z\in\rho(A_\varkappa)$. If
  $A$ is completely non-selfadjoint, then the definition $\cH_{\rm ac}(A_{\varkappa})=N^\varkappa_{\rm e}$ is
  equivalent to the classical definition of the absolutely continuous subspace of a self-adjoint operator, {\it i.e.}
  \begin{equation*}
    N_{\rm e}^\varkappa = \cH_{\rm ac}(A_{\varkappa})\,.
  \end{equation*}
\end{theorem}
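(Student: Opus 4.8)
The plan is to reduce the claimed identity of subspaces to the pointwise equivalence, valid for $u$ in $H$ (or at least on a dense set), between absolute continuity of $u$ with respect to the operator $A_\varkappa$ — which is self-adjoint here because $\varkappa=\varkappa^*$ — and the membership of the channel observation $\Psi_u(z):=\alpha\Gamma_0(A_\varkappa-zI)^{-1}u$ in both Hardy classes $H^2_+(E)$ and $H^2_-(E)$. Indeed, Theorem \ref{lem:on-smooth-vectors-other-form} already identifies the smooth vectors $\widetilde{N}_{\rm e}^\varkappa$ with $\{u:\Psi_u\in H^2_+(E)\cap H^2_-(E)\}$, while \eqref{New_Representation} together with Theorem \ref{lem:similar-to-naboko-thm-4} furnishes the parallel model description in which the resolvent of $A_\varkappa$ acts on the model image of $u$ as plain multiplication by $(k-z)^{-1}$, the hallmark of the absolutely continuous part. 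Granting the equivalence, the set $\widetilde{N}_{\rm e}^\varkappa$ coincides with $\cH_{\rm ac}(A_\varkappa)$; since the latter is closed, passing to the closure gives $N_{\rm e}^\varkappa=\cH_{\rm ac}(A_\varkappa)$.

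The analytic engine is a boundary-value computation. Using the Kre\u{\i}n resolvent formula for $A_\varkappa$ within the boundary triple and the $R$-function $M(z)$, I would express the spectral density of the measure $\mu_u:=\langle E_{A_\varkappa}(\cdot)u,u\rangle$ through the non-tangential boundary values of $\Psi_u$ and of $\im M$. The Hilbert-Schmidt hypothesis on $\alpha\Gamma_0(A_\varkappa-zI)^{-1}$ is precisely what makes this step legitimate: it guarantees that the boundary values $\Psi_u(k+{\rm i}0)$ exist almost everywhere in the Hilbert-Schmidt, hence strong, topology and that the associated operator-valued measures are well behaved, so that $\Psi_u\in H^2_\pm(E)$ translates into square-integrability of the density of the absolutely continuous part of $\mu_u$. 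This yields the passage from smoothness to absolute continuity, and hence the inclusion $N_{\rm e}^\varkappa\subseteq\cH_{\rm ac}(A_\varkappa)$, once the invisible vectors discussed below are excluded.

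Complete non-selfadjointness of $A$, in tandem with $\ker\alpha=\{0\}$, enters through the faithfulness of the channel. If $\Psi_u\equiv 0$ then $\Gamma_0(A_\varkappa-zI)^{-1}u=0$, and since $f_z:=(A_\varkappa-zI)^{-1}u\in\dom A_\varkappa$ forces $\Gamma_1 f_z=B_\varkappa\Gamma_0 f_z=0$ as well, each $f_z$ lies in $\dom A$ with $A_\varkappa f_z=A f_z$; the cyclic subspace generated by $u$ then reduces $A_\varkappa$ and realises a self-adjoint part of $A$, which by complete non-selfadjointness must be trivial, whence $u=0$. This injectivity of $u\mapsto\Psi_u$ rules out singular vectors invisible to the channel: writing $u=u_{\rm ac}+u_{\rm sing}$, smoothness of $u$ and the engine above force $\Psi_{u_{\rm sing}}\in H^2_\pm(E)$, which for a purely singular vector is incompatible with the boundary-value identity unless $\Psi_{u_{\rm sing}}\equiv 0$, and faithfulness then gives $u_{\rm sing}=0$. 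The converse implication, that every absolutely continuous vector has $\Psi_u\in H^2_\pm(E)$, follows from the same identity run in reverse.

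I expect the main obstacle to be the direction from $H^2$-membership to absolute continuity, that is, the exclusion of singular vectors that the channel actually sees: this demands a genuinely quantitative form of the boundary-value identity rather than the mere qualitative existence of boundary values, and it is here that the Hilbert-Schmidt assumption must be used to full strength in order to control the singular part of $\mu_u$. A secondary delicate point is the simultaneous bookkeeping of the two Hardy-class conditions coming from $\complex_+$ and from $\complex_-$, whose compatibility rests on the self-adjointness of $A_\varkappa$ and on the non-degeneracy furnished by $\ker\alpha=\{0\}$.
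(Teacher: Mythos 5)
The survey states Theorem \ref{thm:on-smooth-vectors-a.c.equality} without proof (the argument is in \cite{MR573902,MR1252228,CherednichenkoKiselevSilva,CherKisSilva1}), so your attempt is measured against that source material. Your overall architecture is the right one: translate smoothness into the Hardy-class condition on $\Psi_u:=\alpha\Gamma_0(A_\varkappa-zI)^{-1}u$ via Theorem \ref{lem:on-smooth-vectors-other-form}, and use complete non-selfadjointness of $A$ together with ${\rm ker}\,\alpha=\{0\}$ to show that $\Psi_u\equiv0$ forces $u=0$; your ``faithfulness of the channel'' paragraph is essentially the standard argument and is correct.

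The genuine gap is the asserted pointwise equivalence ``$u$ absolutely continuous $\iff\Psi_u\in H^2_+(E)\cap H^2_-(E)$''. Only the left-pointing implication holds for every $u$; the converse fails for individual vectors, since a generic $u\in\cH_{\rm ac}(A_\varkappa)$ has unbounded spectral density $\rho_u$ and $\Psi_u$ then need not have finite $H^2$ norm. Hence ``the same identity run in reverse'' does not yield $\cH_{\rm ac}(A_\varkappa)\subseteq\widetilde{N}_{\rm e}^\varkappa$, and your closure step collapses. What the theorem actually requires is that a \emph{dense subset} of $\cH_{\rm ac}(A_\varkappa)$ --- e.g.\ vectors with bounded, compactly supported spectral density --- consists of smooth vectors, and this is precisely where the Hilbert--Schmidt hypothesis is spent: writing $T=\alpha\Gamma_0(A_\varkappa-z_0I)^{-1}=\sum_n s_n\langle\cdot,\phi_n\rangle\psi_n$ with $\sum_n s_n^2<\infty$ and invoking the uniform bound $\int_{\mathbb R}\bigl|\bigl\langle(A_\varkappa-(k\pm{\rm i}\varepsilon)I)^{-1}u,\phi\bigr\rangle\bigr|^2\,dk\le C\|\rho_u\|_\infty\|\phi\|^2$ for such $u$, one sums over $n$ to obtain $\sup_{\varepsilon>0}\int_{\mathbb R}\|\Psi_u(k\pm{\rm i}\varepsilon)\|^2\,dk<\infty$, i.e.\ smoothness. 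You instead assign the Hilbert--Schmidt condition to the direction ``smooth $\Rightarrow$ a.c.'', which needs no such hypothesis: for $u\in\widetilde{N}_{\rm e}^\varkappa$ the representation \eqref{New_Representation} gives $\langle(A_\varkappa-z)^{-1}u,u\rangle=\int_{\mathbb R}(s-z)^{-1}\langle W(s)h(s),h(s)\rangle\,ds$ with $W$ the model weight and $h=\Phi u$, so the spectral measure of $u$ is absolutely continuous outright --- complete non-selfadjointness being what makes $\Phi$ faithful here. Relatedly, your exclusion of the singular component is circular: to conclude $\Psi_{u_{\rm sing}}\in H^2_\pm(E)$ you must subtract $\Psi_{u_{\rm ac}}$, whose membership in $H^2_\pm(E)$ is exactly the false pointwise converse you set out to use.
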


\begin{remark}
Alternative conditions, which are even less restrictive in general, that guarantee the validity of the assertion of Theorem
\ref{thm:on-smooth-vectors-a.c.equality} can be obtained along the lines of \cite{MR1252228}.
\end{remark}

\subsubsection{Wave and scattering operators}
\label{sec:wave-operators}
The results of the preceding section allow us, see \cite{CherednichenkoKiselevSilva,CherKisSilva1}, to calculate the wave
operators for any pair $A_{\varkappa_1},A_{\varkappa_2}$, where
$A_{\varkappa_1}$ and $A_{\varkappa_2}$ are two different extensions of a symmetric operator $A$, under the additional
assumption that the operator $\alpha$ has a trivial kernel.
For simplicity,
in what follows we set $\varkappa_2=0$ and write
$\varkappa$ instead of $\varkappa_1$. Note that $A_0$ is a self-adjoint
operator, which is convenient for presentation purposes.

In order to compute the wave operators of this pair, one first establishes the model representation for the function
$\exp(iA_\varkappa t)$, $t\in\reals$, of the
operator $A_\varkappa,$ evaluated on the set of smooth vectors $\widetilde{N}_{\rm e}^\varkappa.$ Due to \eqref{New_Representation}, it is easily shown that on this set $\exp(iA_\varkappa t)$ acts as an operator of multiplication by $\exp(ikt)$. We then utilise the following result.

\begin{proposition}(\cite[Section 4]{MR573902})
  \label{prop:pre-wave-op}
  If $\Phi^*P_K\binom{\widetilde{g}}{g}\in\widetilde{N}_{\rm e}^\varkappa$
  and $\Phi^*P_K\binom{\widehat{g}}{g}\in\widetilde{N}_{\rm e}^0$
  (with the same element\footnote{Despite the fact that
    $\binom{\widetilde{g}}{g}\in\mathscr{H}$ is nothing but a symbol,
    still $\widetilde{g}$ and $g$ can be identified with vectors in
    certain $L^2(E)$ spaces with  operators ``weights'', see
    details below in Section~\ref{sec:spectral-repr-ac}. Further, we recall that even then
    for $\binom{\widetilde{g}}{g}\in\mathscr{H}$, the components $\widetilde{g}$ and $g$ are not, in general, \emph{independent} of each other.} $g$), then
\begin{equation*}
  \norm{\exp(-{\rm i}A_\varkappa t)\Phi^*P_K\binom{\widetilde{g}}{g}-\exp(-{\rm i}A_0
  t)\Phi^* P_K\binom{\widehat{g}}{g}}_{\mathscr H}\convergesto{t}{-\infty}0.
\end{equation*}
\end{proposition}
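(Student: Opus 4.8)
The plan is to conjugate everything by the unitary $\Phi$ into the model space $\mathscr H$, where the dynamics becomes multiplication by $e^{-{\rm i}kt}$ and the comparison of the two semigroups collapses to a single Hardy-space estimate. The first step is to record the model form of the evolutions. For a smooth vector the resolvent identity of Theorem~\ref{lem:similar-to-naboko-thm-4}, in the sharpened form~\eqref{New_Representation} valid for all $z\in\mathbb{C}_+\cup\mathbb{C}_-$, upgrades through Stone's formula to the statement already announced before the proposition, namely that on $\widetilde{N}_{\rm e}^\varkappa$ the group acts as multiplication:
\[
\Phi\exp(-{\rm i}A_\varkappa t)\Phi^*P_K\binom{\widetilde g}{g}=P_{\mathscr K}\Bigl(e^{-{\rm i}kt}\binom{\widetilde g}{g}\Bigr),
\]
and the same for $A_0$ with $\binom{\widehat g}{g}$. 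Since $\Phi$ is unitary and the two model vectors share the second component $g$, subtraction telescopes the difference into the single term $P_{\mathscr K}(e^{-{\rm i}kt}\binom{\varphi}{0})$ with $\varphi:=\widetilde g-\widehat g$, so it remains to show $\|P_{\mathscr K}(e^{-{\rm i}kt}\binom{\varphi}{0})\|_{\mathscr H}\to0$ as $t\to-\infty$.

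A preliminary observation disposes of the equivalence-class subtlety caused by the possibly singular weight in~\eqref{ModelSpace}: although $\widetilde g,\widehat g,g$ are individually only classes, the shared $g$ cancels, so that $\varphi=(\widetilde g+S^*g)-(\widehat g+S^*g)$ and $S\varphi=(S\widetilde g+g)-(S\widehat g+g)$ are honest $L^2(E)$ functions, being differences of the genuinely square-summable combinations listed after~\eqref{ModelSpace}. Thus the projection formula~\eqref{2.1.3.2} applies directly to $\binom{e^{-{\rm i}kt}\varphi}{0}\in L^2(E)\oplus L^2(E)$ and gives $P_{\mathscr K}\binom{e^{-{\rm i}kt}\varphi}{0}=\binom{P_-(e^{-{\rm i}kt}\varphi)}{-P_-(e^{-{\rm i}kt}S\varphi)}$. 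Evaluating the $\mathscr H$-norm through the equivalent form $\|\binom{\widetilde h}{h}\|_{\mathscr H}^2=\|S\widetilde h+h\|_{L^2}^2+\|\Delta_*h\|_{L^2}^2$, and using that, $S$ being the boundary value of a bounded analytic function in $\mathbb{C}_+$, one has $P_-SP_+=0$ and hence $SP_-f-P_-(Sf)=P_+SP_-f$, I would arrive at
\[
\Bigl\|P_{\mathscr K}\binom{e^{-{\rm i}kt}\varphi}{0}\Bigr\|_{\mathscr H}^2\le\bigl\|P_-(e^{-{\rm i}kt}\varphi)\bigr\|^2+\bigl\|P_-(e^{-{\rm i}kt}S\varphi)\bigr\|^2,
\]
where $\|S\|_\infty\le1$ and $\|\Delta_*\|\le1$ have been used.

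The last step is to let $t\to-\infty$. With $\tau:=-t\to+\infty$ both right-hand terms have the form $\|P_-(e^{{\rm i}k\tau}\psi)\|$ with $\psi\in L^2(E)$, and I would show these tend to $0$: this is the elementary outgoing property of the free shift, equivalently the Lax--Phillips property~\eqref{conditionsUt} of $\mathscr D_-$. It is transparent in the translation representation, where $e^{{\rm i}k\tau}$ is a shift and $P_-(e^{{\rm i}k\tau}\psi)$ becomes the $L^2$-tail $\int_\tau^\infty$ of a density; equivalently, $e^{{\rm i}k\tau}$ is inner in $H^2_+(E)$, so it annihilates the $H^2_+(E)$-part of $\psi$ after $P_-$ while the $H^2_-(E)$-part scatters out. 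I would verify this on a dense class of smooth $\psi$ and extend by the uniform bound $\|P_-\|\le1$. Combining the three steps gives the claim.

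I expect the main obstacle to be the first step, the passage from the resolvent representation~\eqref{New_Representation} to the action of the non-selfadjoint group $\exp(-{\rm i}A_\varkappa t)$ on $\widetilde{N}_{\rm e}^\varkappa$: this is the only place where the defining properties of the smooth vectors (and, implicitly, complete non-selfadjointness) genuinely enter, and it must be argued carefully precisely because $A_\varkappa$ is non-selfadjoint and the model norm is weighted. By contrast, the reduction and the concluding Hardy-space estimate are robust and, beyond the shared-$g$ cancellation that keeps $\varphi$ and $S\varphi$ in $L^2(E)$, do not even require the explicit smoothness relations.
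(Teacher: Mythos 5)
Your argument is correct and is, in essence, the proof this proposition actually has: the paper itself offers no proof, citing \cite[Section 4]{MR573902}, and what you have written is a faithful reconstruction of Naboko's original reasoning (pass to the model space, use that the evolution acts as multiplication by $e^{-{\rm i}kt}$ on smooth vectors, cancel the shared component $g$, and kill the remainder $P_{\mathscr K}\binom{e^{-{\rm i}kt}\varphi}{0}$ by the outgoing property of $e^{{\rm i}k\tau}$ on Hardy classes). You are also right to single out the first step as the only genuinely non-elementary input: the passage from the resolvent identity \eqref{New_Representation} to the statement that $\exp(-{\rm i}A_\varkappa t)$ is multiplication by $e^{-{\rm i}kt}$ on $\widetilde N_{\rm e}^\varkappa$ is exactly what the survey asserts without proof in the sentence preceding the proposition, and it is where complete non-selfadjointness and the definition of smooth vectors do the work. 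Your handling of the equivalence-class issue via $\varphi=(\widetilde g+S^*g)-(\widehat g+S^*g)$ and $S\varphi=(S\widetilde g+g)-(S\widehat g+g)$ is a genuine point that needs saying, and the identity $SP_--P_-S=P_+SP_-$ (valid because $SH^2_+(E)\subset H^2_+(E)$) is used correctly. One cosmetic caveat: the norm identity you quote, $\|\binom{\widetilde h}{h}\|^2_{\mathscr H}=\|S\widetilde h+h\|^2+\|\Delta_* h\|^2$, reproduces a typo in the survey; the correct pairing is $\|S\widetilde h+h\|^2+\|\Delta\widetilde h\|^2$ (respectively $\|\widetilde h+S^*h\|^2+\|\Delta_* h\|^2$), as one checks by expanding \eqref{ModelSpace}. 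With the corrected identity your bound becomes $\|P_{\mathscr K}\binom{e^{-{\rm i}kt}\varphi}{0}\|^2_{\mathscr H}\le 2\|P_-(e^{-{\rm i}kt}\varphi)\|^2$, which tends to zero for the same reason, so the conclusion is unaffected.
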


It follows from Proposition \ref{prop:pre-wave-op}  that whenever $\Phi^*P_K\binom{\widetilde{g}}{g}\in\widetilde{N}_{\rm e}^\varkappa$
and $\Phi^*P_K\binom{\widehat{g}}{g}\in\widetilde{N}_{\rm e}^0$ (with the same second component $g$), formally one has
\begin{align*}
\lim_{t\to-\infty}{\rm e}^{{\rm i}A_0t}{\rm e}^{-{\rm i}A_\varkappa t}\Phi^*P_K\binom{\widetilde{g}}{g}&=\Phi^*P_K\binom{\widehat{g}}{g}\\
&=\Phi^*P_K\binom{-(I+S)^{-1}(I+S^*)g}{g}\,.
\end{align*}

In view of the classical definition of the wave operator of a pair of self-adjoint operators, see {\it e.g.} \cite{MR0407617},
\begin{equation*}
  W_\pm(A_0,A_\varkappa):=\slim_{t\to\pm\infty}{\rm e}^{{\rm i}A_0t}{\rm e}^{-{\rm i}A_\varkappa t}P_{\rm ac}^\varkappa,
\end{equation*}
where $P_{\rm ac}^\varkappa$ is the projection onto the absolutely continuous subspace of $A^\varkappa,$ we obtain that, at least formally, for $\Phi^*P_K\binom{\widetilde{g}}{g}\in\widetilde{N}_{\rm e}^\varkappa$ one has
\begin{equation}
  \label{eq:formula-0-kappaw-}
  W_-(A_0,A_\varkappa)\Phi^*P_K\binom{\widetilde{g}}{g}=\Phi^*P_K\binom{-(I+S)^{-1}(I+S^*)g}{g}\,.
\end{equation}

By  considering the case $t\to+\infty$, one also obtains
\begin{equation*}
W_+(A_0,A_\varkappa)\Phi^*P_K\binom{\widetilde{g}}{g}=
\lim_{t\to+\infty}{\rm e}^{{\rm i}A_0t}{\rm e}^{-{\rm i}A_\varkappa t}\Phi^*P_K\binom{\widetilde{g}}{g}
=\Phi^*P_K\binom{\widetilde{g}}{-(I+S^*)^{-1}(I+S)\widetilde{g}}
\end{equation*}
again for $\Phi^*P_K\binom{\widetilde{g}}{g}\in\widetilde{N}_{\rm e}^\varkappa$.

Further, the definition of the wave operators $W_\pm(A_\varkappa,A_0)$
\begin{equation*}
  \norm{{\rm e}^{-{\rm i}A_\varkappa
      t}W_\pm(A_\varkappa,A_0)\Phi^*P_K\binom{\widetilde{g}}{g}-{\rm e}^{-{\rm i}A_0t}\Phi^*P_K\binom{\widetilde{g}}{g}
}_{\mathscr H}\convergesto{t}{\pm\infty}0
\end{equation*}
yields, for all $\Phi^*P_K\binom{\widetilde{g}}{g}\in\widetilde{N}_{\rm e}^0,$
\begin{equation*}
  W_-(A_\varkappa,A_0)\Phi^*P_K\binom{\widetilde{g}}{g}=\Phi^*P_K\binom{-(I+\chi_\varkappa^-(S-I))^{-1}(I+\chi_\varkappa^+(S^*-I))g}{g}
\end{equation*}
and
\begin{equation}
 \label{eq:formula-kappa-0w+}
  W_+(A_\varkappa,A_0)\Phi^*P_K\binom{\widetilde{g}}{g}=\Phi^*P_K
\binom{\widetilde{g}}{-(I+\chi_\varkappa^+(S^*-I))^{-1}
(I+\chi_\varkappa^-(S-I))\widetilde{g}}.
\end{equation}

In order to rigorously justify the above formal argument, {\it i.e.} in order to prove the existence and completeness of the wave operators, one needs to first show that
the right-hand sides of the formulae \eqref{eq:formula-0-kappaw-}--\eqref{eq:formula-kappa-0w+} make sense on dense subsets of the corresponding absolutely continuous subspaces, which is done in a similar way to \cite {MR1252228}. Below, we show how this argument works in relation to the wave operator \eqref{eq:formula-0-kappaw-} only, skipping the technical details in view of making the exposition more transparent.


Let $S(z)-I$ be of the class $\mathfrak {S}_\infty(\overline{ \mathbb {C}}_+),$ {\it i.e.} a compact analytic operator function in the upper half-plane up to the real line. Then so is $(S(z)-I)/2$, which is also uniformly bounded in the upper half-plane along with $S(z)$. We next use the result of \cite[Theorem 3]{MR1252228} about the non-tangential boundedness of operators of the form $(I+T(z))^{-1}$ for $T(z)$ compact up to the real line. We infer that, provided $(I+(S(z_0)-I)/2)^{-1}$ exists for some $z_0\in \mathbb C_+$ (and hence, see \cite{Brodski}, everywhere in $\mathbb C_+$ except for a countable set of points accumulating only to the real line), one has non-tangential boundedness of $(I+(S(z)-I)/2)^{-1}$, and therefore also of $(I+S(z))^{-1}$, for almost all points of the real line.

On the other hand, the latter inverse can be computed in $\mathbb C_+$:
\begin{equation}\label{id1}
\bigl(I+S(z)\bigr)^{-1}=\frac{1}{2}\bigl(I+ {\rm i}\alpha M(z)^{-1}\alpha/2\bigr).
\end{equation}

It follows from (\ref{id1}) and the analytic properties of $M(z)$ that the inverse $(I+S(z))^{-1}$ exists everywhere in the upper half-plane. Thus, Theorem 3 of \cite{MR1252228}  is indeed applicable, which yields that
$(I+S(z))^{-1}$ is $\mathbb R$-a.e. nontangentially bounded and, by the operator generalisation of the Calderon theorem (see \cite{Calderon}), which was extended to the operator context in \cite[Theorem 1]{MR1252228}, it admits measurable non-tangential limits in the strong operator topology almost everywhere on $\mathbb R$. As it is easily seen, these limits must then coincide with $(I+ S(k))^{-1}$ for almost all $k\in \mathbb R$.

Then the correctness of the formula (\ref{eq:formula-0-kappaw-}) for the wave operators follows: indeed, consider $\mathbbm{1}_n(k),$ the indicator of the set $\{k\in \mathbb R: \|(I+S(k))^{-1}\|\leq n\}.$
Clearly, $\mathbbm{1}_n(k)\to 1$ as $n\to \infty$ for almost all $k\in \mathbb R$. Next, suppose that $P_K(\tilde g, g)\in \tilde N_{\rm e}^\varkappa$. Then $P_K\mathbbm{1}_n(\tilde g, g)$ is shown to be a smooth vector as well as
$$
\binom{-(I+S)^{-1}\mathbbm{1}_n (I+S^*) g}{\mathbbm{1}_n g}\in \mathscr H.
$$
It follows, by the Lebesgue dominated convergence theorem, that the set of vectors $P_K\mathbbm{1}_n(\tilde g, g)$ is dense in $N_{\rm e}^\varkappa$.

Thus the following theorem holds.
\begin{theorem}
  \label{thm:existence-completeness-wave-operators}
  Let $A$ be a closed, symmetric, completely nonselfadjoint operator
  with equal deficiency indices and consider its extension $A_\varkappa$  under the assumptions that ${\rm ker}(\alpha)=\{0\}$  and that $A_\varkappa$ has at least one regular point in ${\mathbb C}_+$ and in ${\mathbb C}_-.$ If $S-I\in\mathfrak {S}_\infty(\overline{ \mathbb {C}}_+),$
  then the wave operators
  $W_\pm(A_0,A_\varkappa)$ and $W_\pm(A_\varkappa,A_0)$  exist on dense sets in $N_{\rm e}^\varkappa$ and
  $\mathcal{H}_{\rm ac}(A_0)$, respectively, and are given by the formulae
  (\ref{eq:formula-0-kappaw-})--(\ref{eq:formula-kappa-0w+}). The
  ranges of $W_\pm(A_0,A_\varkappa)$ and  $W_\pm(A_\varkappa,A_0)$ are dense in $\mathcal{H}_{\rm ac}(A_0)$ and $N_{\rm e}^\varkappa,$ respectively.\footnote{In the case when $A_\varkappa$ is self-adjoint, or, in general, the named wave operators are bounded, the claims of the theorem are equivalent (by the classical Banach-Steinhaus theorem) to the statement of the existence and completeness of the wave operators for the pair $A_0, A_\varkappa.$ Sufficient conditions of boundedness of these wave operators are contained in {\it e.g.} \cite[Section 4]{MR573902}, \cite {MR1252228} and references therein.}
\end{theorem}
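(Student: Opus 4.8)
The plan is to transport the whole computation into the functional-model space $\mathscr{H}$ and to work there with multiplication operators, exploiting the fact recorded above that, on the set of smooth vectors $\widetilde{N}_{\rm e}^\varkappa$ characterised by \eqref{New_Representation}, the group $\exp({\rm i}A_\varkappa t)$ is carried by $\Phi$ into multiplication by $\exp({\rm i}kt)$. First I would invoke Proposition \ref{prop:pre-wave-op} to obtain, for matching second components $g$, the strong convergence of $\exp(-{\rm i}A_\varkappa t)\Phi^*P_K\binom{\widetilde{g}}{g}$ towards $\exp(-{\rm i}A_0 t)\Phi^*P_K\binom{\widehat{g}}{g}$ as $t\to-\infty$; rearranging this limit, and its $t\to+\infty$ counterpart, produces the candidate expressions \eqref{eq:formula-0-kappaw-}--\eqref{eq:formula-kappa-0w+} for the four wave operators on the respective smooth-vector sets. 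At this stage the formulae are purely formal, and the substance of the theorem is that their right-hand sides genuinely define operators on dense subsets and that the limits defining $W_\pm$ exist there.

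The analytic heart of the argument is the behaviour of $(I+S(z))^{-1}$ and of the allied factors $(I+\chi_\varkappa^-(S-I))^{-1}$ and $(I+\chi_\varkappa^+(S^*-I))^{-1}$ occurring in \eqref{eq:formula-0-kappaw-}--\eqref{eq:formula-kappa-0w+}. Here I would use the hypothesis $S-I\in\mathfrak{S}_\infty(\overline{\mathbb C}_+)$, so that $(S(z)-I)/2$ is compact up to the real line and uniformly bounded in $\mathbb C_+$, and appeal to \cite[Theorem 3]{MR1252228} on the non-tangential boundedness of $(I+T(z))^{-1}$ for such $T$. The decisive point is that the inverse exists throughout $\mathbb C_+$: the explicit identity \eqref{id1}, $(I+S(z))^{-1}=\tfrac12(I+{\rm i}\alpha M(z)^{-1}\alpha/2)$, together with the analytic properties of $M$, shows that $(I+S(z))^{-1}$ is well defined for every $z\in\mathbb C_+$. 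Consequently $(I+S(z))^{-1}$ is $\mathbb R$-almost-everywhere non-tangentially bounded, and the operator Calderon theorem \cite[Theorem 1]{MR1252228} then supplies strong non-tangential boundary limits almost everywhere on $\mathbb R$, necessarily equal to $(I+S(k))^{-1}$; the same reasoning applies to the $\chi_\varkappa^\pm$-factors.

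With these boundary values in hand I would carry out the density reduction. Writing $\mathbbm{1}_n(k)$ for the indicator of $\{k\in\mathbb R:\|(I+S(k))^{-1}\|\le n\}$, I would verify that for any smooth vector $P_K\binom{\widetilde{g}}{g}\in\widetilde{N}_{\rm e}^\varkappa$ the truncated vector $P_K\mathbbm{1}_n\binom{\widetilde{g}}{g}$ is again smooth and that $\binom{-(I+S)^{-1}\mathbbm{1}_n(I+S^*)g}{\mathbbm{1}_n g}$ lies in $\mathscr{H}$. Since $\mathbbm{1}_n\to1$ almost everywhere, the Lebesgue dominated convergence theorem shows that these truncated vectors are dense in $N_{\rm e}^\varkappa$; on each of them the boundedness of the model factors makes the limits defining $W_-(A_0,A_\varkappa)$ legitimate and yields exactly \eqref{eq:formula-0-kappaw-}, the remaining three wave operators being treated identically, with the analogous truncation on $\mathcal H_{\rm ac}(A_0)$ governing $W_\pm(A_\varkappa,A_0)$. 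Density of the ranges follows by reading the same formulae backwards, using the invertibility of the model factors on the truncated sets.

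The step I expect to be the main obstacle is the passage from non-tangential boundedness to genuine strong boundary limits, together with the verification that the truncated expressions stay inside $\mathscr{H}$ and inside the smooth-vector sets. The subtlety, flagged in the footnote to Proposition \ref{prop:pre-wave-op}, is that the components $\widetilde{g}$ and $g$ of an element of $\mathscr{H}$ are \emph{not} independent, so one must check that multiplication by $\mathbbm{1}_n$ and application of $(I+S)^{-1}$ respect the constraints encoded in the model norm \eqref{ModelSpace}. This is precisely where the operator-valued Calderon theorem and the fine structure of $\mathscr{H}$ do the real work; once they are in place, everything else is a rearrangement of the model representation.
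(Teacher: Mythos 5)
Your proposal follows essentially the same route as the paper: the formal derivation of \eqref{eq:formula-0-kappaw-}--\eqref{eq:formula-kappa-0w+} via Proposition \ref{prop:pre-wave-op}, the justification of $(I+S(z))^{-1}$ through the identity \eqref{id1} combined with \cite[Theorem 3]{MR1252228} and the operator Calderon theorem, and the density argument via the truncations $\mathbbm{1}_n$ and dominated convergence. The points you flag as delicate (strong boundary limits, and the truncated vectors remaining smooth and in $\mathscr H$) are exactly the ones the paper also asserts with reference to \cite{MR1252228} rather than proving in full, so the proposal is a faithful reconstruction of the paper's argument.
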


\begin{remark}
\label{long_remark}
1. The condition $S(z)-I\in \mathfrak S_\infty(\overline{ \mathbb {C}}_+)$ can be replaced by the following equivalent condition: $\alpha M(z)^{-1}\alpha$ is nontangentially bounded almost everywhere on the real line, and $\alpha M(z)^{-1}\alpha\in \mathfrak S_\infty(\overline{ \mathbb {C}}_+)$ for $\Im z\geq 0$. 

2. The latter condition is satisfied \cite{Krein}, as long as the scalar function $\|\alpha M(z)^{-1}\alpha\|_{\mathfrak S_p}$ is nontangentially bounded almost everywhere on the real line for some $p<\infty,$ where ${\mathfrak S_p},$
$p\in(0, \infty],$ are the standard Schatten -- von Neumann classes of compact operators.

3. An alternative sufficient condition is the condition $\alpha\in\mathfrak S_2$ (and therefore $B_\varkappa \in \mathfrak S_1$), or, more generally, $\alpha M(z)^{-1}\alpha\in\mathfrak S_1,$ see \cite{MR1036844} for details.

\end{remark}


Finally, the scattering operator $\Sigma$ for the pair $A_\varkappa,$
$A_0$
is defined by
$$
\Sigma=W_{+}^{-1}(A_\varkappa,A_0) W_{-}(A_\varkappa,A_0).
$$
The above formulae for the wave operators lead ({\it cf.} \cite{MR573902}) to the following formula for the action of $\Sigma$ in the model representation:
\begin{equation}
\Phi\Sigma\Phi^*P_K \binom{\tilde g}{g}=
P_K \binom
{-(I+\chi_\varkappa^-(S-I))^{-1}(I+\chi_\varkappa^+(S^*-I))g}
{(I+S^*)^{-1}(I+S)(I+\chi_\varkappa^-(S-I))^{-1}(I+\chi_\varkappa^+(S^*-I))g},
\label{last_formula}
\end{equation}
whenever $\Phi^*P_K \binom{\tilde g}{g}\in\widetilde N_{\rm e}^0$. In fact, as explained
above, this representation holds on a dense linear set in
$\widetilde N_{\rm e}^0$ within the conditions of Theorem \ref{thm:existence-completeness-wave-operators}, which guarantees that
all the objects on the right-hand side of the formula (\ref{last_formula}) are correctly defined.

\subsubsection{Spectral representation for the absolutely continuous part of the operator $A_0$ and the scattering matrix}
\label{sec:spectral-repr-ac}
The identity
$$
\biggl\|P_K\binom{\tilde g}{g}\biggr\|^2_{\mathscr H}=\bigl\langle(I-S^*S)\tilde g, \tilde g \bigr\rangle
$$
which is derived in \cite[Section 7]{MR573902} for all
$P_K \binom{\tilde g}{g}\in\widetilde N_{\rm e}^0$  allows us to consider the
isometry
$F: \Phi\widetilde N_{\rm e}^0\mapsto L^2(E; I-S^*S)$
defined by the formula
\begin{equation*}
FP_K \binom{\tilde g}{g}=\tilde g.
\end{equation*}
Here $L^2(E; I-S^*S)$ is the Hilbert space of $E$-valued functions on
$\mathbb R$ square summable with the matrix ``weight'' $I-S^*S$.

Under the assumptions of Theorem \ref{thm:existence-completeness-wave-operators} one can show that the range of the operator $F$ is dense in the space $L^2(E; I-S^*S)$.
Thus, the operator $F$  admits an extension to the
unitary mapping between $\Phi N_{\rm e}^0$ and
$L^2(E; I-S^*S)$.

It follows that the self-adjoint operator $(A_0-z)^{-1}$
considered on $\widetilde N_{\rm e}^0$ acts as the multiplication by
$(k-z)^{-1},$ $k\in{\mathbb R},$
 in $L^2(E; I-S^*S)$. In particular,
if one considers the absolutely continuous ``part'' of the
operator $A_0$, namely the operator $A_0^{({\rm e})}:=A_0|_{N_{\rm e}^0},$ then  $F\Phi A_0^{({\rm e})}\Phi^*F^*$
is the operator of multiplication by the independent variable in the
space $L^2(E; I-S^*S)$.

In order to obtain a spectral representation from the above result, it
is necessary to diagonalise the ``weight'' in the definition of the
above $L^2$-space. The corresponding transformation is straightforward when, e.g.,
$\alpha=\sqrt{2}I.$ (This choice of $\alpha$ satisfies the conditions of Theorem \ref{thm:existence-completeness-wave-operators} {\it e.g.} when the boundary space $\mathcal K$ is finite-dimensional).  In this particular case one has
\begin{equation*}
S=(M-{\rm i}I)(M+{\rm i}I)^{-1},
\end{equation*}
and
consequently
\begin{equation*}
I-S^*S=-2i (M^*-{\rm i}I)^{-1}(M-M^*)(M+{\rm i}I)^{-1}.
\end{equation*}
Introducing the unitary transformation
\begin{equation*}
G: L^2(E; I-S^*S)\mapsto L^2(E; -2{\rm i}(M-M^*)),
\end{equation*}
 by the
formula $g\mapsto (M+{\rm i}I)^{-1}g$, one arrives at the fact that
$
GF\Phi A_0^{({\rm e})}\Phi^* F^*G^*$
is the operator of multiplication by the independent
variable in the space $L^2(E;-2{\rm i}(M-M^*))$.

\begin{remark}
The weight $M^*-M$ can be assumed to be naturally diagonal in many physically relevant settings, including the setting of quantum graphs considered in  Section \ref{sec:theor-appl-funct}.
\end{remark}

The above result
only pertains to the absolutely
continuous part of the self-adjoint operator $A_0$, unlike {\it e.g.}
the passage to the classical von Neumann direct integral, under which
the whole of the self-adjoint operator gets mapped to the
multiplication operator in a weighted $L^2$-space (see {\it e.g.}
\cite[Chapter 7]{MR1192782}). Nevertheless, it proves useful in scattering theory, since it
yields an explicit expression for the scattering matrix $\widehat{\Sigma}$ for the pair
$A_\varkappa,$ $A_0,$ which is the image of the scattering operator
$\Sigma$ in the spectral representation of the operator $A_0.$ Namely, one arrives at:

\begin{theorem}
The following formula holds:
\begin{equation}
\label{scat2}
\widehat{\Sigma}=GF\Sigma(GF)^*=
(M-\varkappa)^{-1}(M^*-\varkappa)(M^*)^{-1}M,
\end{equation}
where the right-hand side
represents the operator of multiplication by the corresponding function in the space $L^2(E;-2{\rm i}(M-M^*))$.
\end{theorem}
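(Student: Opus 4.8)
The plan is to transport the model–space formula (\ref{last_formula}) for $\Phi\Sigma\Phi^*$ through the two unitaries $F$ and $G$, and then to eliminate the characteristic function $S$ in favour of the Weyl function $M$ using the explicit relations available in the case $\alpha=\sqrt2\,I$. Throughout, it suffices to argue on the dense set of smooth vectors $\widetilde N_{\rm e}^0$, where every inverse below exists and the weight $I-S^*S$ is under control; the extension to the full subspace $N_{\rm e}^0$, and hence the identification of $\widehat\Sigma$ as a genuine multiplication operator, is then supplied by Theorem \ref{thm:existence-completeness-wave-operators} together with the density argument used there.

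First I would record the action of $\Sigma$ in the $F$-representation. Since $\Sigma$ maps $N_{\rm e}^0$ into itself and $F$ retains only the top component, reading off the first entry of (\ref{last_formula}) gives
\[
F\,\Phi\Sigma\Phi^*P_K\binom{\widetilde g}{g}=-\bigl(I+\chi_\varkappa^-(S-I)\bigr)^{-1}\bigl(I+\chi_\varkappa^+(S^*-I)\bigr)\,g.
\]
The one genuinely delicate point is that this is expressed through the second component $g$, whereas the argument seen by $F$ is $\widetilde g$. To close the gap I would invoke the description of $\Phi\widetilde N_{\rm e}^0$ read off from the wave–operator formula (\ref{eq:formula-0-kappaw-}): every smooth vector of $A_0$ satisfies $\widetilde g=-(I+S)^{-1}(I+S^*)g$, whence $g=-(I+S^*)^{-1}(I+S)\widetilde g$. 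Substituting (the two minus signs cancel), the operator $F\Phi\Sigma\Phi^*F^*$ becomes multiplication on $L^2(E;I-S^*S)$ by
\[
\sigma:=\bigl(I+\chi_\varkappa^-(S-I)\bigr)^{-1}\bigl(I+\chi_\varkappa^+(S^*-I)\bigr)\bigl(I+S^*\bigr)^{-1}\bigl(I+S\bigr).
\]

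Next I would pass to the Weyl function. Using $S=(M-{\rm i}I)(M+{\rm i}I)^{-1}$ and its adjoint $S^*=(M^*-{\rm i}I)^{-1}(M^*+{\rm i}I)$ (the latter being consistent with the stated weight formula, since then $I-S^*S=-2{\rm i}(M^*-{\rm i}I)^{-1}(M-M^*)(M+{\rm i}I)^{-1}$), together with $\chi_\varkappa^\pm=(I\pm{\rm i}\varkappa)/2$, each factor of $\sigma$ collapses to a Möbius expression in $M$ or $M^*$:
\begin{align*}
I+S&=2M(M+{\rm i}I)^{-1}, & I+\chi_\varkappa^-(S-I)&=(M-\varkappa)(M+{\rm i}I)^{-1},\\
I+S^*&=2(M^*-{\rm i}I)^{-1}M^*, & I+\chi_\varkappa^+(S^*-I)&=(M^*-\varkappa)(M^*-{\rm i}I)^{-1}.
\end{align*}
Multiplying the four factors and using that functions of $M^*$ commute, so that $(M^*-{\rm i}I)^{-1}(M^*)^{-1}(M^*-{\rm i}I)=(M^*)^{-1}$, all scalar prefactors cancel and one is left with
\[
\sigma=(M+{\rm i}I)(M-\varkappa)^{-1}(M^*-\varkappa)(M^*)^{-1}M\,(M+{\rm i}I)^{-1}.
\]
Finally, conjugating by $G$, which is multiplication by $(M+{\rm i}I)^{-1}$ (so $G\,(\,\cdot\,)\,G^*$ amounts to multiplication by $(M+{\rm i}I)^{-1}(\,\cdot\,)(M+{\rm i}I)$), the outer factors $(M+{\rm i}I)^{\mp1}$ telescope and yield exactly $\widehat\Sigma=(M-\varkappa)^{-1}(M^*-\varkappa)(M^*)^{-1}M$. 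Since conjugating multiplication operators by multiplication operators again produces a multiplication operator, $\widehat\Sigma$ is automatically of the asserted multiplicative form on $L^2(E;-2{\rm i}(M-M^*))$.

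The principal obstacle is not the algebra, which is the short Möbius computation indicated above, but the bookkeeping between the two components of a model vector: $F$ sees only $\widetilde g$ while (\ref{last_formula}) is phrased through $g$, so one must correctly install the constraint $\widetilde g=-(I+S)^{-1}(I+S^*)g$ characterising $\Phi\widetilde N_{\rm e}^0$, and verify the required invertibility of $I+S$ and $I+S^*$ (equivalently of $M$ and $M^*$) along the way. A secondary, purely analytic, point requiring care is that all of these identities are first established on the dense set $\Phi\widetilde N_{\rm e}^0$ and only then extended to the whole absolutely continuous subspace, the boundedness and density needed for this closure having already been secured in Theorem \ref{thm:existence-completeness-wave-operators}.
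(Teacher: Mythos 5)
Your computation is correct and follows exactly the route the paper intends (the paper itself only says ``one arrives at'' (\ref{scat2}), deferring the details to \cite{CherednichenkoKiselevSilva,CherKisSilva1}): read off the first component of (\ref{last_formula}), install the constraint $(I+S)\widetilde g+(I+S^*)g=0$ characterising $\Phi\widetilde N_{\rm e}^0$ (which indeed follows from the definition of $\mathscr N_\pm^0$ with $\chi_0^\pm=I/2$), perform the M\"obius substitution $S=(M-{\rm i}I)(M+{\rm i}I)^{-1}$, and conjugate by $G$. I verified the four factor identities and the telescoping of the $(M+{\rm i}I)^{\pm1}$ factors; the argument, including the density/extension step via Theorem \ref{thm:existence-completeness-wave-operators}, is sound.
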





\subsection{Functional models for operators of boundary value problems}
\label{sec:funct-model-family}

The surjectivity condition in Definition~\ref{def:bonudaryTriple}
is a strong limitation that excludes many important problems
for extensions of symmetric operators with
infinite deficiency indices.
The standard textbook version of a boundary value
problem for the Laplace operator in a
bounded domain~$\Omega \subset \mathbb R^3$
with smooth boundary~$\partial \Omega$
is a typical example.
The ``natural'' boundary maps $\Gamma_0$ and $\Gamma_1$ are
two trace
operators~$\Gamma_0 : u \mapsto u|_{\partial\Omega}$,
$\Gamma_1 : u \mapsto -\partial u/\partial n|_{\partial\Omega}$,
where $\partial/\partial n$ denotes the derivative along the
exterior normal to the boundary~$\partial\Omega$.
The ranges of these operators do not coincide
with~$\mathcal H = L^2(\Omega)$
(the simplest possible Hilbert space
of functions defined on the boundary)
so the assumption of surjectivity does not
hold.
A simple argument reveals the source of
this problem: it appears due to
the limited
compatibility of the Green's formula
required to hold
on all of~$\dom(A^*)$ and the required
surjectivity of both
boundary maps~$\Gamma_0$,
$\Gamma_1$ also defined on
the same domain~$\dom(A^*)$.
This limitation of the boundary triples
formalism can be relaxed and the framework
extended to cover more general cases,
albeit at the cost of increased complexity,
see~\cite{BehrndtLanger2007, BehrndtLanger2012},
the book~\cite{BHS} and the
references therein for a detailed account.
%

%
%
%
%
%

Formally a more restrictive approach
applicable to semibounded
symmetric operators~$A$ and
not based on
the description of~$\dom A^*$
was developed
by M.~Birman, M.~Kre\u{\i}n and
M.~Vishik.
Despite its limited scope, this
theory proves to be indispensable
in applications to various
problems of ordinary and
partial differential operators.
The publication~\cite{AlonsoSimon}
contains a concise
exposition of these results.
It was realized later that the
Birman-Kre\u{\i}n-Vishik
method is closely related to the
theory of linear systems with
boundary control and to the
original ideas of M.~Liv\v{s}ic
from the open systems theory,
see e.~g.~\cite{Ryzhov_systems, Ryzh_spec}
in this connection.
Let us give a brief account of
relevant results
derived from the works cited above
and tailored to the purposes
of current presentation.

\subsubsection{Boundary value problem}

Let~$H$, $E$ be two separable
Hilbert spaces, $A_0$ an unbounded closed
linear operator on $H$ with the dense domain~$\dom A_0$
and $\Pi : E \to H$ a bounded linear
operator defined everywhere in~$E$.
\begin{theorem}\label{thm:BVP1}
 Assume the following:
 \begin{itemize}
  \item $A_0$ is self-adjoint and  boundedly invertible;
  \item There exists the left inverse~$\tilde
\Gamma_0$ of $\Pi$ so that $\tilde
\Gamma_0 \Pi \varphi = \varphi$ for all $\varphi \in E;$
  \item The intersection of $\dom A_0$ and $\ran \Pi$
   is trivial: $\dom A_0\cap \ran\Pi = \{0\}$.
\end{itemize}
%
%
Since~$\dom A_0$ and $\ran\Pi$ have trivial intersection,
the direct sum~$\dom A_0 \dotplus \ran\Pi$
form a dense linear set in~$H$ that
can be described as
$\{A_0^{-1} f + \Pi \varphi \mid f\in H, \varphi \in E\}$.
Define two linear operators~$A$ and $\Gamma_0$ with
the common
domain~$\dom A_0 \dotplus \ran\Pi$
as ``null extensions'' of $A_0$
and $\tilde \Gamma_0$ to the complementary
component of~$\dom A_0\dotplus \ran\Pi$
\begin{equation*}
 A: A_0^{-1} f + \Pi \varphi \mapsto f, \quad
  \Gamma_0 : A_0^{-1} f + \Pi\varphi  \mapsto \varphi ,
  \qquad f \in H, \varphi \in E
\end{equation*}
The spectral ``boundary value problem'' associated
with the pair~$\{A_0,\Pi\}$ satisfying these conditions
is the system of two linear
equations for the unknown
vector~$u \in \dom A := \dom A_0\dotplus \ran\Pi:$
\begin{equation}\label{eqn:BVP_system}
 \left\{\,\,
 \begin{aligned}
   (&A - zI) u = f \\
  &\Gamma_0 u = \varphi
 \end{aligned}
  \right . \qquad f \in H, \quad \varphi \in E,
\end{equation}
where $z \in \mathbb C$ is the spectral parameter.

Let $z\in \rho(A_0)$, $f\in H$, $\varphi \in E$.
Then the system~(\ref{eqn:BVP_system}) admits
the unique
solution~$u_z^{f,\varphi}$
given by the formula
\begin{equation*}
 u_z^{f,\varphi} = (A_0 - zI)^{-1} f + (I - z A_0^{-1})^{-1} \Pi \varphi
\end{equation*}
If the expression on the right hand side
is null for some~$f\in H$, $\varphi \in E$,
then $f = 0$ and $\varphi = 0$.
\end{theorem}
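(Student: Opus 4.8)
The plan is to reduce every assertion to the algebra of the direct-sum decomposition $\dom A_0 \dotplus \ran\Pi$, since both $A$ and $\Gamma_0$ are defined by reading off the two components of a vector in this sum. I would begin by recording the two preliminary facts on which everything rests. First, because $A_0$ is boundedly invertible, $A_0^{-1}$ is a bounded injection of $H$ onto $\dom A_0$, so $\dom A_0 = \{A_0^{-1}f : f\in H\}$; together with $\ran\Pi = \{\Pi\varphi:\varphi\in E\}$ this gives the stated description of the domain, and density is immediate since $\dom A_0$ is already dense in $H$ (as $A_0$ is self-adjoint). Second, the sum is genuinely direct and the representation $u = A_0^{-1}f + \Pi\varphi$ is unique: if $A_0^{-1}f = \Pi\varphi$ lies in $\dom A_0\cap\ran\Pi = \{0\}$, then injectivity of $A_0^{-1}$ forces $f=0$ and injectivity of $\Pi$ (it has the left inverse $\tilde\Gamma_0$) forces $\varphi=0$. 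This uniqueness is exactly what makes $A$ and $\Gamma_0$ well defined.

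For the solution formula I would verify directly that $u_z^{f,\varphi}$ solves (\ref{eqn:BVP_system}). The key manipulation is the resolvent identity $(I - zA_0^{-1})^{-1} = I + z(A_0-zI)^{-1}$, valid for $z\in\rho(A_0)$ (one checks $(I-zA_0^{-1})^{-1} = A_0(A_0-zI)^{-1} = I + z(A_0-zI)^{-1}$). Applying it to $\Pi\varphi$ separates the two components cleanly,
\[
(I-zA_0^{-1})^{-1}\Pi\varphi = \Pi\varphi + z(A_0-zI)^{-1}\Pi\varphi,
\]
so that
\[
u_z^{f,\varphi} = \bigl[(A_0-zI)^{-1}f + z(A_0-zI)^{-1}\Pi\varphi\bigr] + \Pi\varphi,
\]
with the bracketed term in $\dom A_0$ and the last term in $\ran\Pi$. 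Reading off the $\ran\Pi$ component and using injectivity of $\Pi$ gives $\Gamma_0 u_z^{f,\varphi} = \varphi$ at once. For the first equation I would compute $A u_z^{f,\varphi}$ by applying $A_0$ to the bracketed term, again using $A_0(A_0-zI)^{-1} = I + z(A_0-zI)^{-1}$, and then subtract $z\,u_z^{f,\varphi}$; the resolvent terms cancel in pairs and one is left with exactly $f$, so $(A-zI)u_z^{f,\varphi} = f$.

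Uniqueness of the solution follows by applying the same decomposition to the homogeneous problem: if $(A-zI)w = 0$ and $\Gamma_0 w = 0$, then the $\ran\Pi$ component of $w$ vanishes, so $w\in\dom A_0$ with $A w = A_0 w$, whence $(A_0-zI)w = 0$ and $w=0$ because $z\in\rho(A_0)$. The final assertion, namely injectivity of the map $(f,\varphi)\mapsto u_z^{f,\varphi}$, is read off from the same two-component picture: if $u_z^{f,\varphi}=0$, directness forces both components to vanish, so $\Pi\varphi = 0$ yields $\varphi=0$ and then $(A_0-zI)^{-1}f = 0$ yields $f=0$.

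I do not expect a genuine obstacle here, as the whole statement is verificatory; the one point requiring care is the bookkeeping of the decomposition, since $\Pi\varphi$ is in general not in $\dom A_0$ while $A$ and $\Gamma_0$ act only through the direct-sum components. The resolvent identity $(I-zA_0^{-1})^{-1}=I+z(A_0-zI)^{-1}$ is precisely the device that isolates the single $\ran\Pi$ contribution $\Pi\varphi$ and leaves everything else inside $\dom A_0$; getting this split right is the crux of the computation.
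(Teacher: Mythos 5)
Your proof is correct and follows exactly the argument that the paper (which states this theorem without proof, citing the works of Ryzhov) intends: direct verification through the unique decomposition $u=A_0^{-1}f+\Pi\varphi$ together with the identity $(I-zA_0^{-1})^{-1}=I+z(A_0-zI)^{-1}$, which isolates the $\ran\Pi$ component. All steps check out, including the well-definedness of $A$ and $\Gamma_0$ and the injectivity claim at the end.
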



Let~$\Lambda$ be a linear operator on $E$ with the
domain~$\dom  \Lambda \subset E$ not necessarily dense in $E$.
Define the linear operator~$\Gamma_1$ on~$\dom\Gamma_1
 :=   \{A_0^{-1}f +\Pi\varphi \mid f \in H, \varphi \in \dom\Lambda \}$ as
the mapping
\[
\Gamma_1 :  A_0^{-1}f +\Pi\varphi  \mapsto \Pi^* f + \Lambda \varphi, \qquad
f\in H, \quad \varphi \in \dom\Lambda
\]
This definition implies~$\Lambda = \Gamma_1\Pi|_{\dom\Lambda}$.

Denote~$\mathscr D :=\dom\Gamma_1 =
 \{A_0^{-1}f +\Pi\varphi \mid f \in H, \varphi \in \dom\Lambda \}$.
 Obviously $\mathscr D \subset \dom A$.
 The next theorem is a form of the Green's formula for the operator~$A$.
\begin{theorem}\label{thm:GreenBVP}
 Assume that $\Lambda$ is
 selfadjoint (and therefore densely defined) in $E$. Then
 \[
(A u, v ) - (u, Av) = (\Gamma_1 u, \Gamma_0 v)_E  -
(\Gamma_0 u, \Gamma_1 v)_E, \qquad u, v \in \mathscr D
 \]
\end{theorem}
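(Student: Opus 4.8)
The plan is to establish the identity by a direct computation, representing each $u, v \in \mathscr D$ in the form dictated by the direct-sum decomposition $\dom A_0 \dotplus \ran\Pi$. Concretely, I would write $u = A_0^{-1} f + \Pi\varphi$ and $v = A_0^{-1} g + \Pi\psi$ with $f, g \in H$ and $\varphi, \psi \in \dom\Lambda$. This representation is unique: if $A_0^{-1} f' + \Pi\varphi' = 0$, then $A_0^{-1} f' = -\Pi\varphi' \in \dom A_0 \cap \ran\Pi = \{0\}$, whence $f' = 0$ (as $A_0$ is injective, being boundedly invertible) and $\varphi' = 0$ (as $\Pi$ admits the left inverse $\tilde\Gamma_0$ and is therefore injective). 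This uniqueness is precisely what renders $A$, $\Gamma_0$ and $\Gamma_1$ well defined on their domains, so that I may freely use $Au = f$, $\Gamma_0 u = \varphi$, $\Gamma_1 u = \Pi^* f + \Lambda\varphi$, and the analogous relations for $v$.

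With these substitutions the next step is to expand the left-hand side,
\[
(Au, v) - (u, Av) = (f, A_0^{-1} g + \Pi\psi) - (A_0^{-1} f + \Pi\varphi, g).
\]
Since $A_0 = A_0^*$, the operator $A_0^{-1}$ is self-adjoint, so $(f, A_0^{-1} g) = (A_0^{-1} f, g)$ and these two terms cancel, leaving $(f, \Pi\psi) - (\Pi\varphi, g)$. Transferring $\Pi$ to its adjoint then yields $(\Pi^* f, \psi)_E - (\varphi, \Pi^* g)_E$.

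The remaining step is to expand the right-hand side using $\Gamma_1 u = \Pi^* f + \Lambda\varphi$, $\Gamma_1 v = \Pi^* g + \Lambda\psi$, $\Gamma_0 u = \varphi$ and $\Gamma_0 v = \psi$:
\[
(\Gamma_1 u, \Gamma_0 v)_E - (\Gamma_0 u, \Gamma_1 v)_E = (\Pi^* f, \psi)_E + (\Lambda\varphi, \psi)_E - (\varphi, \Pi^* g)_E - (\varphi, \Lambda\psi)_E.
\]
Because $\Lambda$ is assumed self-adjoint and $\varphi, \psi \in \dom\Lambda$, the terms $(\Lambda\varphi, \psi)_E$ and $(\varphi, \Lambda\psi)_E$ coincide and cancel, leaving exactly $(\Pi^* f, \psi)_E - (\varphi, \Pi^* g)_E$, which agrees with the reduced left-hand side; this proves the formula. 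I do not expect a genuine obstacle here, as the computation is elementary. The only points demanding care are the well-definedness of $\Gamma_1$ via uniqueness of the decomposition, and the correct invocation of self-adjointness at the two junctures where cancellation occurs—of $A_0^{-1}$ on the $H$-component and of $\Lambda$ on the $E$-component. The hypothesis $\Lambda = \Lambda^*$ is exactly what annihilates the would-be asymmetric contribution of $\Lambda$, so that both sides reduce to the same genuinely non-symmetric coupling term $(\Pi^* f, \psi)_E - (\varphi, \Pi^* g)_E$; it is thus indispensable rather than incidental.
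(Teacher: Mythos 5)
Your proof is correct and is the standard direct verification: the paper states this Green's formula without proof (citing the works of Ryzhov and the Birman--Kre\u{\i}n--Vishik literature), and the intended argument is exactly the computation you carry out — decompose $u,v$ via the direct sum $\dom A_0\dotplus\ran\Pi$, cancel the symmetric contributions of $A_0^{-1}$ and $\Lambda$ using their self-adjointness, and match the remaining cross terms $(\Pi^*f,\psi)_E-(\varphi,\Pi^*g)_E$ on both sides. Your preliminary remark on uniqueness of the decomposition (hence well-definedness of $A$, $\Gamma_0$, $\Gamma_1$) is the right point to make explicit, and the argument is complete as written.
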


Notice the difference with the boundary
triples version~(\ref{GreenFormula}),
where the operator on the left hand side is the
adjoint of a symmetric operator.
In contrast,
Theorem~\ref{thm:GreenBVP} has no relation to
symmetric operators. The Green's formula is valid on a
set defined by the selfadjoint~$A_0$
and an arbitrarily chosen selfadjoint
operator~$\Lambda$.

\smallskip

Under the assumptions of Theorems~\ref{thm:BVP1}
and~\ref{thm:GreenBVP}
the operator-valued analytic function
\[
M(z) = \Lambda + z\Pi^*(I -zA_0^{-1})^{-1}\Pi, \quad  z \in \rho(A_0)
\]
defined on~$\dom M(z) = \dom\Lambda$ is the
Weyl function (cf.~\cite{BHS}) of the boundary value
problem~(\ref{eqn:BVP_system})
in the sense of equality (cf. Definition~\ref{def:WeylFunction})
\[
  M(z)\Gamma_0 u_z = \Gamma_1 u_z, \quad z \in \rho(A_0)
\]
where~$u_z= u_z^{f,\varphi}$ is the solution
to~(\ref{eqn:BVP_system}) with~$f = 0$
and~$\varphi \in \dom\Lambda$.

\smallskip

For the boundary value problem pertaining to the Laplace operator in a
bounded domain~$\Omega \subset \mathbb R^3$ with a
smooth boundary~$\partial \Omega$  the boundary maps~$\Gamma_0$,
$\Gamma_1$ are defined as
$\Gamma_0 : u \mapsto u|_{\partial\Omega}$,
$\Gamma_1 : u \mapsto -\partial u/\partial n|_{\partial\Omega}$.
Then~$A_0$ is the Dirichlet Laplacian in~$L^2(\Omega)$
and~$\Pi$ is the operator of harmonic continuation  from
the boundary space~$E = L^2(\partial \Omega)$ to $\Omega$.
The conditions~$\dom(A_0) \cap \ran(\Pi) = \{0\}$ and
$\Gamma_0\Pi = I_E$ are satisfied by virtue of
the embedding theorems for Sobolev
classes.
In this setting~$M(\cdot)$ is known as the Dirichlet-to-Neumann
map, which is a pseudodifferential operator
defined on~$H^1(\partial \Omega)$.
A special role of the operator~$\Lambda = M(0)$ for the study
of boundary value
problems was pointed out by M.~Vishik in his work~\cite{MR0051404} and
sometimes~$\Lambda$ in the settings of elliptic partial differential
operators is referred to as the {\it Vishik operator}.

\subsubsection{Family of boundary value problems}

General boundary value problems for the operator~$A$
have the form (cf.~\cite{MR0051404})
\begin{equation}\label{eqn:BVP_system_family}
\left\{\,\,
 \begin{aligned}
   (&A - zI) u = f \\
  &(\alpha\Gamma_0 + \beta\Gamma_1)u = \varphi
 \end{aligned}
  \right . \qquad f \in H, \quad \varphi \in E.
\end{equation}
Here $\alpha$, $\beta$ are linear operators on~$E$ such that
$\beta$ is bounded (and defined everywhere in $E$)
and $\alpha$ can be unbounded in which case $\dom\alpha \supset
\dom\Lambda = \mathscr D$.
Under certain verifiable conditions the
solutions to~(\ref{eqn:BVP_system_family}) exist and
are described by the following theorem.

\begin{theorem}[see~\cite{Ryzh_spec}]\label{thm:BVP_def}
Assume that the conditions of Theorems~\ref{thm:BVP1} and
\ref{thm:GreenBVP} are satisfied
and that the operator
sum~$\alpha + \beta \Lambda$ is correctly defined
on~$\dom\Lambda$ and closable in $E$.
Then~$\alpha + \beta M(z)$, $z \in \rho(A_0)$ is also
closable as an
additive perturbation of $\alpha + \beta \Lambda$
by the bounded operator
$M(z) - \Lambda$.
Denote by~$\mathscr B(z)$ the closure
of $\alpha + \beta M(z)$, $z\in
\rho(A_0)$ and
let $\mathscr B = \mathscr B(0)$.
\begin{itemize}
 \item
Consider the Hilbert space~$\mathscr H_{\mathscr B}$
formed by the vectors~$\left\{u = A_0^{-1} f + \Pi\varphi \mid
f \in H, \varphi \in \dom\mathscr B\right\}$ and endowed
with the norm
\[
 \|u\|_{\mathscr B} = \left( \|f\|^2 + \|\varphi\|^2 + \|\mathscr B \varphi
\|^2\right)^{1/2}
\]
The formal sum~$\alpha \Gamma_0 + \beta\Gamma_1$
is a bounded map from the Hilbert space~$\mathscr H_{\mathscr B}$ to $E$.
Note that
the summands in~$(\alpha \Gamma_0 + \beta\Gamma_1)u$,
$u\in \mathscr H_{\mathscr B}$  need not be defined individually.
\item
Assume that for some~$z \in \rho(A_0)$ the operator~$\mathscr B(z)$ has a bounded
inverse $[\mathscr B(z)]^{-1}$.
Then the problem~(\ref{eqn:BVP_system_family}) is uniquely solvable.
Under this condition there exists a closed operator~$A_{\alpha, \beta}$
with dense domain
\[
\dom A_{\alpha, \beta} = \left\{
u \in {\mathscr H}_{\mathscr B} \mid
 (\alpha \Gamma_0 + \beta\Gamma_1)u = 0
\right\} = \Ker(\alpha \Gamma_0 + \beta\Gamma_1)
\]
and the resolvent (Kre\u{\i}n formula) holds:
\[
 (A_{\alpha, \beta} - zI)^{-1} = (A_0 -zI)^{-1} -
 (I -zA_0^{-1})^{-1}\Pi [\mathscr B(z)]^{-1} \beta \Pi^* (I -zA_0^{-1})^{-1}
\]
%
\item
Denote by~$A_{00}$ the restriction of~$A_0$ to the
set~$\Ker{\Gamma_1}$,
that is,
$A_{00} = \left. A\right|_{\Ker\Gamma_0\cap\Ker\Gamma_1}$.
Then $A_{00}$ is a symmetric operator with its domain not necessarily
dense in $H$ and
\[
 A_{00} \subset A_{\alpha, \beta} \subset A
\]
\end{itemize}
\end{theorem}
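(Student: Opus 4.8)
The plan is to exploit the explicit parametrisation $\dom A=\dom A_0\dotplus\ran\Pi$ from Theorem~\ref{thm:BVP1}, writing every $u\in\dom A$ uniquely as $u=A_0^{-1}f+\Pi\varphi$, together with the solution operator $\gamma_z:=(I-zA_0^{-1})^{-1}\Pi$. For $z\in\rho(A_0)$ a direct check gives $(A-zI)\gamma_z\varphi=0$ and $\Gamma_0\gamma_z\varphi=\varphi$, while the defining property of the Weyl function yields $\Gamma_1\gamma_z\varphi=M(z)\varphi$ for $\varphi\in\dom\Lambda$. The conceptual heart of the argument is the interpretation of the formal sum $\alpha\Gamma_0+\beta\Gamma_1$: on $\gamma_z\varphi$ with $\varphi\in\dom\Lambda$ it acts as $(\alpha+\beta M(z))\varphi$, and since $\beta(M(z)-\Lambda)$ is bounded this operator is closable with closure $\mathscr B(z)$ satisfying $\dom\mathscr B(z)=\dom\mathscr B$ for every $z\in\rho(A_0)$. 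I therefore \emph{define} $(\alpha\Gamma_0+\beta\Gamma_1)u$, for $u=A_0^{-1}f+\Pi\varphi\in\mathscr H_{\mathscr B}$, as the continuous extension $\beta\Pi^*f+\mathscr B\varphi$; this agrees with the literal sum on the dense set where $\varphi\in\dom\Lambda$, but makes sense for all $\varphi\in\dom\mathscr B$.

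For the first bullet I would check that $u\mapsto(f,\varphi,\mathscr B\varphi)$ embeds $\mathscr H_{\mathscr B}$ isometrically into $H\oplus E\oplus E$: injectivity is the uniqueness clause of Theorem~\ref{thm:BVP1}, and completeness follows because $\mathscr B$ is closed, so the limit of a Cauchy sequence retains $\varphi\in\dom\mathscr B$. Hence $\mathscr H_{\mathscr B}$ is a Hilbert space, and boundedness of the boundary map is then immediate from the definition above, $\|\beta\Pi^*f+\mathscr B\varphi\|\le\|\beta\|\,\|\Pi^*\|\,\|f\|+\|\mathscr B\varphi\|\le C\|u\|_{\mathscr B}$.

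For the second bullet the key reduction is that any $u\in\mathscr H_{\mathscr B}$ with $(A-zI)u=f$ and $\Gamma_0u=\psi$ must equal $(A_0-zI)^{-1}f+\gamma_z\psi$: subtracting $\gamma_z\psi$ leaves an element of $\dom A_0$ with zero boundary data. Applying the extended boundary map and using $\Gamma_1(A_0-zI)^{-1}f=\Pi^*(I-zA_0^{-1})^{-1}f$ on $\dom A_0$, the condition $(\alpha\Gamma_0+\beta\Gamma_1)u=\varphi$ collapses to the single equation $\mathscr B(z)\psi=\varphi-\beta\Pi^*(I-zA_0^{-1})^{-1}f$ in $E$. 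When $[\mathscr B(z)]^{-1}$ is bounded this has the unique solution $\psi=[\mathscr B(z)]^{-1}(\varphi-\beta\Pi^*(I-zA_0^{-1})^{-1}f)\in\dom\mathscr B$, giving unique solvability; taking $\varphi=0$ and reading off $u=(A_{\alpha,\beta}-zI)^{-1}f$ reproduces exactly the stated Kre\u{\i}n formula. The resulting resolvent is bounded and everywhere defined, so $A_{\alpha,\beta}$ is closed. To see its domain is dense I would verify that the adjoint resolvent is injective: $R_z^*g$ again has the form $(A_0-\bar z)^{-1}g+\gamma_{\bar z}\eta\in\dom A$, and applying $A-\bar zI$ returns $g$, so $R_z^*g=0$ forces $g=0$ and thus $\ran R_z=\dom A_{\alpha,\beta}$ is dense.

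The third bullet is then bookkeeping: $A_{\alpha,\beta}\subset A$ by construction, while any $u\in\Ker\Gamma_0\cap\Ker\Gamma_1$ has $\varphi=0\in\dom\mathscr B$ and $(\alpha\Gamma_0+\beta\Gamma_1)u=0$, so $A_{00}\subset A_{\alpha,\beta}$, with symmetry of $A_{00}$ immediate from the Green's formula of Theorem~\ref{thm:GreenBVP} since both traces vanish on its domain. The main obstacle, and the step requiring the most care, is the first one: making $\alpha\Gamma_0+\beta\Gamma_1$ genuinely well defined and bounded on $\mathscr H_{\mathscr B}$ even though $\Gamma_1$ is undefined for $\varphi\notin\dom\Lambda$. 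Everything downstream hinges on identifying $(\alpha\Gamma_0+\beta\Gamma_1)\gamma_z$ with the closed operator $\mathscr B(z)$ and on the $z$-independence of $\dom\mathscr B(z)$, which is precisely the content of the bounded-perturbation hypothesis imposed on $\alpha+\beta\Lambda$.
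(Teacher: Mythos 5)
Your argument is correct and follows essentially the route of the cited source \cite{Ryzh_spec} (the paper itself states this theorem without proof): the decomposition $u=A_0^{-1}f+\Pi\varphi$, the solution operator $\gamma_z=(I-zA_0^{-1})^{-1}\Pi$ with $(A-zI)\gamma_z\varphi=0$, $\Gamma_0\gamma_z\varphi=\varphi$, $\Gamma_1\gamma_z\varphi=M(z)\varphi$, and the closure $\mathscr B(z)$ as a bounded perturbation of $\mathscr B$ are exactly the ingredients that yield the well-defined boundary map on $\mathscr H_{\mathscr B}$, the Kre\u{\i}n formula, and the inclusions $A_{00}\subset A_{\alpha,\beta}\subset A$. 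Your identification of the extended boundary operator with $\beta\Pi^*f+\mathscr B\varphi$ and the adjoint-injectivity argument for density are both sound.
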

\noindent
Notice that~$A_0$ is a self-adjoint extension of~$A_{00}$ contained in~$A$.
It is not difficult to recognize
the parallel with the von Neumann
theory of self-adjoint extensions of
symmetric operators.
The operator~$A_{00}$ is the ``minimal'' operator with
the ``maximal'' equal
to~$A_{00}^{*}$ (whenever the latter exists)
and all self-adjoint extensions~$A_{s.a.}$ of~$A_{00}$
satisfy~$A_{00} \subset A_{s.a.} \subset A_{00}^*$.
Within the framework of Theorem~\ref{thm:BVP_def}
the equivalent of~$A_{00}^*$ is the operator~$A$
of the boundary value problem~(\ref{eqn:BVP_system})
defined on the domain~$\dom(A)$.
The semiboundness condition for~$A_{00}$ is relaxed and
replaced by the bounded invertibility of~$A_0$, i.~e.,
the existence of a regular point of~$A_0$ on the real line.

\subsubsection{Functional model}

The results of previous sections hint at the possibility
of a functional model construction for the
family of operators~$A_{\alpha, \beta}$
with a suitably chosen pair~$(\alpha, \beta)$.
Having in mind the model
space~(\ref{ModelSpace}),
the selection of a ``close'' dissipative
operator is typically guided
by the properties of the problem at hand.
In the most general case when
parameters~$(\alpha, \beta)$ are
unspecified,
a reasonable approach seems to be
to construct a model suitable
for the widest possible range
of~$(\alpha, \beta)$.
In accordance with the work by
S.~Naboko~\cite{MR573902},
the action of the operator~$A_{\alpha, \beta}$
will then be
explicitly described in the functional model representation.
This program is realized in the recent paper~\cite{CherKisSilva2}.
The ``model'' dissipative operator~$L = A_{-iI, I}$
corresponds
to the boundary
condition~$(\Gamma_1 - i\Gamma_0)u = 0$
for $u \in \dom(L)$ in the
notation~(\ref{eqn:BVP_system_family}).
The characteristic function of~$L$ then coincides
with the Cayley transform of the Weyl
function,
\[
 S(z) = (M(z) - {\rm i}I)(M(z) +{\rm i}I)^{-1} : E \to E,
 \quad z \in \mathbb C_+.
\]
Under the mapping of the upper half plane to the unit disk,
the function~$S((z-{\rm i})/(z+{\rm i}))$ is the Sz.Nagy-Foia\c{s}
characteristic function of a contraction,
namely, the Cayley transform~$V_0$ of~$A_{00}$
extended to~$H \ominus \dom(V_0)$ by the null
operator, hence resulting in a partial isometry.
If~$A_{00}$  has no non-trivial
self-adjoint parts,
the dissipative operator~$L = A_{-{\rm i}I, I}$ is
also completely non-selfadjoint.
The standard assumptions of complete
non-selfadjoint\-ness and maximality of
$L$ are thus met.
The minimal self-adjoint dilation~$\mathscr A$ of~$L$ formally
coincides with the dilation obtained in
Section~\ref{sect:BoundaryTriplesModel}
for the case of boundary triples with
$A_+^* = A$, $B_+ = {\rm i}I_E$, $B_+^* = - {\rm i}I_E$ and
$\alpha = \sqrt{2} I_E$.

The description of operators~$A_{\alpha, \beta}$ in the
spectral representation of dilation~$\mathscr A$,
i.e. in the model space~(\ref{ModelSpace}),
cannot be easily obtained for arbitrary~$(\alpha,\beta)$.
However, under certain conditions imposed on the parameters~$(\alpha, \beta)$
the model construction becomes tractable. Namely, one assumes that the operator $\beta$ is boundedly invertible,
the operator~$B = \beta^{-1}\alpha$ is
bounded in~$E$ and that the operator-valued function~$M(z)$ is invertible
and $B M(z)^{-1}$ is compact at least for
some~$z$ in the upper and lower
half-plane of
$\mathbb C$ (and therefore for all $z \in \mathbb C \setminus \mathbb R$).

It follows, that the operator~$A_{\alpha, \beta}$ has at most discrete spectrum
in~$\mathbb C \setminus \mathbb R$ with possible
accumulation to the real line only.
Moreover, the resolvent set of~$A_{\alpha, \beta}$ coincides with the open set of
complex numbers~$z\in \mathbb C$ such that the closed operator~$\overline{B +
M(z)}$ has a bounded inverse, i.~e. $\rho(A_{\alpha, \beta}) = \{z \in \mathbb
C \mid 0 \in \rho(\overline{B + M(z)})\}$.

Finally, the
model representation of the resolvent~$(A_{\alpha,\beta} - zI)^{-1}$, $z \in
\rho(A_{\alpha,\beta})$ is explicitly computed in the model space~(\ref{ModelSpace}).

Once the latter are established, it is natural to expect that the absolutely continuous subspaces can be characterised for the operators of boundary value problems in the case of exterior domains and the scattering theory can then be constructed following the recipe of Naboko, as presented in Section 2.2. If this programme is pursued, this would yield a natural representation of the corresponding scattering matrix purely in terms of the $M-$operator defined above. A paper devoted to this subject is presently being prepared for publication.

\subsection{Generalised resolvents}

\label{gen_res1}

In the present Section, we briefly recall the notion of generalised resolvents (see \cite{AG} for details) of symmetric operators, which play a major role in the asymptotic analysis of highly inhomogeneous media as presented in the present paper. It turns out that generalised resolvents and their underlying self-adjoint operators in larger (dilated) spaces feature prominently in our approach; moreover, their setup turns out to be natural in the theory of time-dispersive and frequency-dispersive media. On the mathematical level, this area is closely interrelated with the theory of dilations and functional models of dissipative operators, the latter (at least, in the case of dissipative extensions of symmetric operators) being an important particular case of the former.

We start with an operator-function $R(z)$ in the Hilbert space $H$, analytic in $z\in \mathbb C_+$. Assuming that $\im R(z)\geq 0$ for $z\in \mathbb{C}_+$, and under the well-known asymptotic condition $$\limsup_{\tau\to+\infty} \tau \|R({\rm i}\tau)\|<+\infty,$$ one has due to the operator generalisation of Herglotz theorem by Neumark \cite{Naimark1943}:
$$
R(z)=\int_{-\infty}^{\infty} \frac{1}{t-z} dB(t),
$$
where $B(t)$ is a uniquely defined left-continuous operator-function such that $B(-\infty)=0$, $B(t_2)-B(t_1)\geq 0$ for $t_2>t_1$ and $B(+\infty)$ bounded. By the argument of \cite{Figotin_Schenker_2005}, it follows from the Neumark theorem \cite{Naimark1940,Naimark1943} (cf., e.g., \cite{MR1036844}) that there exists a bounded operator $X: \mathcal H \mapsto H$ with an auxiliary Hilbert space $\mathcal H$ and a self-adjoint operator $\mathcal A$ in $\mathcal H$ such that
$$
R(z)=X(\mathcal A -z)^{-1} X^*
$$
with
$$
X X^* =\text{\rm s-}\!\lim_{t\to +\infty} B(t)=\text{\rm s-}\limsup_{\tau\to +\infty} \tau \im R(i\tau).
$$

A particular case of this result, see \cite{Strauss}, holds when (a) for some $z_0\in \mathbb{C}_+$ there exists a subspace $\mathfrak L\subset H$ such that (i) for all non-real $z$ and all $f\in \mathfrak{L}$ one has
$$
R(z)f -R(z_0)f= (z-z_0)R(z)R(z_0)f,
$$
(ii) for any $z\in \mathbb{C}_+$ and any $g\in \mathfrak{L}^\bot$ one has
$$
\|R(z)g\|^2\leq \frac{1}{\im z} \im \langle R(z)g,g \rangle,
$$
(iii)  for all $g\in \mathfrak{L}^\bot$ the function $R(z)g$ is regular in $\mathbb{C}_+$;  (b)  $\overline{R(z_0) \mathfrak{L}}= H$.

Under these assumptions, the function $R(z)$ is ascertained to be a generalised resolvent of a densely defined symmetric operator $A$ in $H$. Moreover, the deficiency index of $A$ in $\mathbb{C}_+$ is equal to $\dim \mathfrak{L}^\bot$. Precisely, this means that $\mathcal H \supset H$ and $X=P$, where $P$ is the orthogonal projection of $\mathcal H$ to $H$, i.e.,
\begin{equation}\label{eq:genres}
R(z)=P(\mathcal A -z)^{-1}|_H,\quad \im z\not=0,
\end{equation}
where $\mathcal A$ is a self-adjoint out-of-space extension of the symmetric operator $A$ (or, alternatively, a zero-range model with an internal structure, see Section 4 below). Moreover, under the minimality condition $\bigvee_{\im z\not=0}  (\mathcal A-z)^{-1}H=\mathcal H$, it is defined uniquely up to a unitary transform which acts as unity on $H$, see \cite{Naimark1940}.

The latter representation takes precisely the same form as the dilation condition \eqref{eqn:DilationEq} in the case of maximal dissipative extensions of symmetric operators, with the generalised resolvent $R(z)$ replacing the resolvent of a dissipative operator. It is in fact shown that the property \eqref{eq:genres} generalises \eqref{eqn:DilationEq}.

Namely, it turns out \cite{Strauss,Strauss_ext} that
$$
R(z)=(A_{B(z)}-z)^{-1} \text{ for } z\in \mathbb{C}_+\cup \mathbb{C}_-.
$$
Here in the particular case of equal deficiency indices, which is of interest to us from the point of view of zero-range models with an internal structure, $A_{B(z)}$ is a $z-$dependant extension of $A$ such that there exists a boundary triple $(\mathscr K, \Gamma_0, \Gamma_1)$ defining this extension as follows:
$$
\dom A_{B(z)}=\{u\in \dom A^* | \Gamma_1 u= B(z)\Gamma_0 u\}
$$
with $B(z)$ being a $-R$ function (i.e., an analytic operator-function with a non-positive imaginary part in $\mathbb{C}_+$).

Because of $B^*(\bar z)=B(z)$, which is the standard extension of an $R-$function into $\mathbb{C}_-$  implied here, the extension $A_{B(z)}$ turns out to be dissipative for $z\in \mathbb{C}_-$ and anti-dissipative for $z\in \mathbb{C}_+$.  We henceforth refer to $\mathcal A$ as the Neumark-Strauss dilation of the generalised resolvent $R(z)=(A_{B(z)}-z)^{-1}$. In a particular case of constant $B(z)=B$ such that $\im B\leq 0$, we have
$$
(A_{B}-z)^{-1}=P(\mathcal A -z)^{-1}|_H \text{ for } z\in \mathbb{C}_-
\text{ and }
(A_{B^*}-z)^{-1}=P(\mathcal A -z)^{-1}|_H \text{ for } z\in \mathbb{C}_+,
$$
which are precisely \eqref{eqn:DilationEq} for both $A_B$ and $A_{B^*}$ at the same time.

From what has been said above it follows that generalised resolvents appear when one conceals certain degrees of freedom in a conservative physical system, either for the sake of convenience or because these are not known. In particular, we refer the reader to the papers \cite{Figotin_Schenker_2005,Figotin_Schenker_2007b}, where systems with time dispersion are analysed, with prescribed ``memory'' term. It turns out that passing over to the frequency domain one ends up with a generalised resolvent. It then proves possible to explicitly restore a conservative Hamiltonian (the operator $\mathcal A$ in our notation) which yields precisely the postulated time dispersion. In a nutshell, the idea here is to work with an explicit and simple enough model of the part of the space pertaining to the ``hidden degrees of freedom'' instead of the unnecessarily complicated physical equations which govern them. Similar ideas have been utilised in \cite{Tip_1998,Tip_2006}. The same technique has found its applications in numerics, and in particular in the so-called theory of absorbing boundary conditions, see, e.g., \cite{Absorbing1,Absorbing2}.

The problem of constructing a spectral representation for a Neumark-Strauss dilation of a given generalised resolvent thus naturally arises. In a number of special cases, where $\mathcal H$ and $\mathcal A$ admit an explicit construction (and in particular one has $\mathcal H  = H\oplus \mathbb{C}^k$ for $k\geq 1$), this can be done following essentially the same path as outlined in Section \ref{sec:ryzh-funct-model} above. This is due to the fact that in this case the operator $\mathcal A$ can be realised as a von Neumann extension of a symmetric operator in $\mathcal H$ with equal and finite deficiency indices. The corresponding construction in the case when $k=1$ is presented in \cite{CENS}. Surprisingly, this rather simple model already has a number of topical applications, see Sections 4 and 5 of the present paper, and also the papers \cite{Exner,KuchmentZeng,KuchmentZeng2004}, where a generalised resolvent of precisely the same class appears in the setting of thin networks converging to quantum graphs.

The generic case has been studied by Strauss in \cite{Strauss_survey}, where three spectral representations of the dilation are constructed, analogous to the ones of L.~de~Branges and J.~Rovnyak, B.~S.~Pavlov, and B.~Sz.-Nagy and C.~Foia\c{s}. These results however present but theoretical interest, as they are formulated in terms which apparently cannot be related to the original problem setup and are therefore not usable in applications.

A different approach was suggested by M.~D.~Faddeev and B.~S.~Pavlov in \cite{PavlovFaddeev}, where a problem originally studied by P.~D.~Lax and
R.~S.~Phillips in \cite{Lax1973} was considered. In \cite{PavlovFaddeev}, a five-component representation of the dilation was constructed, which further allowed to obtain the scattering matrix in an explicit form. It therefore comes as no surprise that, precisely as in the Lax-Phillips approach, the resonances are revealed to play a fundamental role in this analysis (cf. the analysis of the so-called Regge poles in the physics literature).

Later on, and again motivated in particular by applications to scattering, Neumark-Strauss dilations were constructed in some special cases by J.~Behrndt et al., see  \cite{Behrndt2007,OQS_Malamud}.

We remark that all the above results, except \cite{CENS} and \cite{Strauss_survey}, have stopped short of attempting to construct a spectral form of the Neumark-Strauss dilation. Any generic construction leading to the latter and formulated in ``natural'' terms is presently unknown, to the best of our knowledge.

\subsection{Universality of the model construction}

The general form of the
functional model of an unbounded
closed
operator~\cite{Ryzhov_closed}
is a generalization
of the special cases, as developed
in the papers by B.~Pavlov
and S.~Naboko cited above.
%
%
This section aims to clarify the relationship
between the models pertaining to
different representations
of the characteristic function of a
non-selfadjoint operator.
As an illustration, we consider
two special cases of
operators of mathematical physics
described above and link them to
the general model
construction of~\cite{Ryzhov_closed}.

\subsubsection{Characteristic function of a linear operator~\cite{Strauss1960}}

Let~$L$ be a closed linear operator on a (separable)
Hilbert space~$H$ with the
domain~$\dom(L)$.
Consider the form~$\Psi_L(\cdot, \cdot)$ defined
on~$\dom(L)\times \dom(L)$:
\begin{equation}\label{Psi_L}
 \Psi_L(f,g) = \frac{1}{\rm i}\left[ (Lf,g)_H  - (f, Lg)_H\right],
 \quad f,g \in \dom(L)
\end{equation}
\begin{definition}\label{defBoundarySpaceAndOperator}
 The {\it boundary space} of~$L$ is a
 linear
space~$E$ with a possibly indefinite
scalar product~$(\cdot, \cdot)_E$ such that
there exists a closed
linear operator~$\Gamma$ defined
on~$\dom(\Gamma) = \dom(L)$ and the
following identity holds
\begin{equation}\label{BoundaryOperator}
\Psi_L(f,g) = (\Gamma f, \Gamma g)_E,
\quad f,g \in\dom(L)
\end{equation}
The operator~$\Gamma$ is called the
{\it boundary operator} of~$L$.
\end{definition}
This definition is meaningful for any
linear operator on~$H$.
For the purposes of model construction,
it is sufficient to focus only on the case
when $L$ is densely defined and dissipative.
The model representation of a non-dissipative
operator is given in the model space of an
auxiliary dissipative one, as explained above.
When~$L$ is dissipative, one has
$\Psi_L(f,f) \geq 0$, $f\in\dom(L)$  and
therefore the space~$E$ can be chosen
as the Hilbert space obtained by
factorization and completion
of~$\{\Gamma f \mid f\in \dom(L)\}$
with respect to the norm~$\|\Gamma f\|_E$,
$f\in \dom(L)$.
Note that the  boundary
operator~$\Gamma$ defined
in~(\ref{BoundaryOperator})
is not uniquely defined.
Due to the Hilbert structure of~$E$,
for any isometry~$\pi$ on $E$
the operator~$\pi\Gamma$
also satisfies the
condition~(\ref{BoundaryOperator}).
Moreover, if (\ref{BoundaryOperator})
holds for some
operator~$\Gamma^\prime$
and space~$E^\prime$,
then there exists an
isometry~$\pi : E^\prime \to E$
such that
$\Gamma = \pi \Gamma^\prime$.

Denote by~$E_*$ and $\Gamma_*$
the boundary space
and the boundary operator for
the dissipative operator~$-L^*$ endowed
with the Hilbert metric.
Assume that~$L$ is maximal, i.~e.,
$\mathbb C_- \subset \rho(L)$.
Then $L^*$ is also maximal
and~$\mathbb C_+ \subset \rho(L^*)$.
The following definition is valid for general
non-selfadjiont operators.
\begin{definition}
Let~$E$ and  $\Gamma$
be the boundary space and the boundary
operator for a closed
densely defined operator~$L$. Let~$E_*$
and  $\Gamma_*$  be the boundary space
and the boundary operator
for the operator~$-L^*$.
The characteristic function of the operator~$L$
 is the analytic operator-valued
 function~$S(z): E \to E_*$
defined by
\[
  S(z) \Gamma f  =
  \Gamma_* (L^* - zI)^{-1}(L - zI)f,
  \quad f \in \dom(L),\quad z \in \rho(L^*)
\]
If $L$ is dissipative, then the spaces~$E$ and $E_*$
are Hilbert spaces, and
the operator~$S(z)$ is a contraction on $E$ for each $z\in\mathbb C_+$.
\end{definition}

Note that the actual form of~$S(z)$
depends on the choice
of  boundary spaces and boundary
operators.
If~$\Gamma^\prime : \dom(L) \to E^\prime$
and $\Gamma_*^\prime : \dom(L^*) \to E_*^\prime$
satisfy the condition~(\ref{BoundaryOperator})
and $S^\prime(z)$ is the
corresponding characteristic function, then
there exist two isometries~$\pi_* : E_* \to E_*^\prime$
and $\pi : E \to E^\prime$ such that
$\pi_* S(z) = S^\prime(z)\pi$, $z\in \rho(L^*)$.
Such characteristic functions of the operator~$L$
are often called {\it equivalent}.


All steps involved in the model
construction outlined above do not depend
on the concrete form
of the characteristic function~\cite{BMNW2018}.
In particular cases when the characteristic
function can be expressed in terms of
the original problem, the model
admits a ``natural'' form in relation to the
problem setup.
Examples of such calculations are
provided towards the end of this section.

In order to compute a
characteristic function of~$L$ one has to
come up with a suitable definition
of boundary spaces and operators.
Consider first the general case where
no specific assumptions on the
operator~$L$ are made, and
introduce the Cayley
transform of~$L$, i.e.,
$T = (L -{\rm i}I)(L+{\rm i}I)^{-1}$.
The operator~$T$ is clearly contractive.
The operators
\[
Q :=\frac{1}{\sqrt{2}}(I-T^*T)^{1/2},\qquad
Q_* :=\frac{1}{\sqrt{2}}(I-T T^*)^{1/2}
\]
are thus non-negative.
A straightforward calculation~\cite{Ryzhov_closed}
shows that
the boundary spaces~$E$, $E_*$ and
the boundary operators~$\Gamma$,
$\Gamma_*$  can be defined as follows:
\[
 E = \clos\ran(Q), \quad E_* = \clos\ran(Q_*),
 \quad \Gamma = \clos Q(L + {\rm i}I),
 \quad \Gamma_* = \clos Q_*(L^* - {\rm i}I).
\]
Here the operators $\Gamma$ and $\Gamma_*$
are the closures of the respective mappings
initially defined on~$\dom(L)$ and $\dom(L^*)$.
This choice leads to the following expression
for the characteristic function of
the operator~$L:$
\begin{equation}\label{abstractCharFunc}
 S(z) = \left.\bigl(T - (z-{\rm i})\Gamma_*(L^* - zI)^{-1}Q\bigr)\right|_E, \quad
 z \in \mathbb C_+.
\end{equation}
An explicit calculation
reveals that~$S(z)$ is closely related to
the characteristic function
of~$T$,
\[
S(z) = - \vartheta_T\left(\frac{z -{\rm i}}{z + {\rm i}}\right), \quad z \in \mathbb C_+,
\]
where
\[
\vartheta_T(\lambda) =\left.
\bigl(- T + 2 \lambda Q_*(I - \lambda T^*)^{-1}Q\bigr)\right|_E,
\qquad |\lambda| < 1
\]
is the Sz.-Nagy-Foia\c{s}
characteristic function of the contractive operator~$T$.
Therefore, the formula~(\ref{abstractCharFunc})
is the abstract form of the  characteristic function
of~$L$ regardless the ``concrete'' realization of
the operator~$L$.

\subsubsection{Examples}

The actual choice of boundary spaces
and operators is guided by the specifics
of the problem at hand.
Let us demonstrate the ``natural'' selection for these objects for some
of the models introduced in Section 2.

\paragraph{Additive perturbations}
This is the simplest (and canonical)
case of the characteristic
function calculations included here solely
for the completeness of exposition.
Let~$L$ be a dissipative operator of Section \ref{sec:add_perturbations},
defined as an additive perturbation
of a self-adjoint operator.
Then for~$f, g\in\dom(A) = \dom(L)$
one has
\[
 \Psi_L(f,g) = \frac{1}{\rm i}\left[(Lf ,g) - (f, Lg)\right] =
 \frac{1}{\rm i}\left[ \left({\rm i}\frac{\alpha^2}{2}f,g\right) -
 \left(f,{\rm i}\frac{\alpha^2}{2}g\right)\right] =
 (\alpha f, \alpha g)
\]
and therefore,  the boundary
space can be chosen as~$E = \clos\ran(\alpha)$
with
the boundary operator~$\Gamma$
defined as a
mapping~$\Gamma f \mapsto \alpha f$. In a similar way, $E_*=E$ and  $\Gamma_*=\Gamma.$
%
%
%

The characteristic function of~$L$ corresponding
to this selection of boundary spaces
and operators is then computed as \eqref{DissCharF}.
As explained above, this characteristic function is equivalent
to the function~(\ref{abstractCharFunc}).

\paragraph{Almost solvable extensions}

In the notation of Section~\ref{sec:ryzh-funct-model},
let~$L = A_B$ be a dissipative almost solvable
extension of a symmetric operator~$A$
corresponding to the bounded
operator~$B = B_R + i B_I$
with $B_R = B_R^*$ and $B_I = B_I^* \geq 0 $
defined on the space~$\mathcal K$.
Denote~$\alpha := \sqrt{2}(B_I)^{1/2}$.
From the Green's formula~(\ref{GreenFormula})
and the condition~$\Gamma_1 f= B \Gamma_0 f$, $f\in\dom(L)$
we obtain for~$f,g \in \dom(L)$:
\[
\Psi_L(f,g) = \frac{1}{\rm i}
\left[ (Lf,g) - (f,Lg) \right]=  \frac{1}{\rm i}
\bigl((B - B^*)\Gamma_0 f, \Gamma_0 g\bigr)_\mathcal K =
(\alpha\Gamma_0 f, \alpha\Gamma_0 g)_{\mathcal K}.
\]
Next we demonstrate two alternative approaches to
the derivation of the characteristic function of~$L$.

\paragraph{Approach 1.}
Define the boundary space~$\mathcal E$ of
the operator~$L = A_B$ as
the factorization and completion of
the linear set~$\mathcal L = \{ \Gamma_0 f, \mid f \in \dom (L)\}$
endowed with the norm~$\|u\|_{\mathcal E} =
\|\alpha^2 u\|_\mathcal K$,
$u \in \mathcal L$.
The norm~$\|\cdot\|_{\mathcal E}$ is degenerate
if~$\ker(\alpha)$ is non-trivial, thus the factorization
becomes necessary.
The corresponding boundary operator~$\Gamma$ is the
map~$\Gamma : f \mapsto \Gamma_0 f$
on the domain~$\dom(\Gamma) = \dom(L)$.
In a similar way, $\mathscr E_*$ is defined
as the factorization and completion of the linear
set ~$\mathcal L_*= \{ \Gamma_0 g, \mid g \in \dom (L^*)\}$
with respect to the norm~$\|u\|_{\mathcal E_*} =
\|\alpha^2  u\|_\mathcal K$, $u \in \mathcal L_*$.
The boundary operator~$\Gamma_*$ is the
mapping~$\Gamma_* : g \mapsto \Gamma_0 g$
defined on~$\dom(\Gamma_*) = \dom(L^*)$.
Thus, both boundary spaces~$\mathcal E$
and $\mathcal E_*$ are Hilbert spaces
with the norm associated with the ``weight''
equal to~$\alpha^2$.

An explicit computation then yields the following expression for the characteristic function $\mathscr S$:
\[
 \mathscr S(z) =\bigl(B^* - M(z)\bigr)^{-1}\bigl(B - M(z)\bigr),
 \quad z \in\rho(L^*),
\]
where $M(z)$ is the Weyl-Titchmarsh $M-$function of Section~\ref{sec:ryzh-funct-model}.

\paragraph{Approach 2.}
An alternative form of the characteristic function
is obtained based on the
boundary operators~$\Gamma$ and $\Gamma_*$ introduced
as the closures of
the mappings~$f \mapsto \alpha\Gamma_0 f$
and $g\mapsto \alpha\Gamma_0 g$
defined on the linear sets~$ \dom(L)$ and
$\dom(L^*)$,
respectively.
The boundary spaces~$E$ and $E_*$
in this case are chosen
as
\[
E = \clos\ran\bigl(\alpha\Gamma_0|_{\dom(L)}\bigr),\qquad
E_* = \clos\ran\bigl(\alpha\Gamma_0|_{\dom(L^*)}\bigr).
\]
In all applications considered in this paper, these
spaces coincide:~$E= E_*$.
Similarly to the situation of additive perturbations,
it is often convenient (and common) to
extend these spaces to~$\clos\ran(\alpha)$.

The
corresponding characteristic function is then represented
by the formula~(\ref{eq:charf_ext})
repeated here for the  sake of readers' convenience:
\[
S(z) = I_E + {\rm i}\alpha\bigl(B^* - M(z)\bigr)^{-1}\alpha : E \to E,
\quad z \in \rho(L^*).
\]
In contrast to Approach 1, this form captures the specifics
of the extension parameter~$B$.
In particular, the dimension of the space~$E$  equals
the dimension of the range of~$\alpha$.
If the operator~$B$ is a compact perturbation
of a self-adjoint, i.~e., $B_I = \alpha^2/2 \in \mathfrak S_\infty$, then
the characteristic function~$S(z)$ is an operator-valued
function of the form $I + \mathfrak{S_\infty}$ defined
on the
(unweighted) Hilbert space~$\clos\ran(\alpha)$.

\paragraph{Equivalence of $\mathscr S$ and $S$.}
The mapping~$\widehat\alpha: f \mapsto \alpha f$,
$f \in \mathcal E$
is an isometry from the ``weighted'' space~$\mathcal E$
to the space~$\mathcal K$.
The equality~$\widehat\alpha \mathscr S(z) = S(z)\widehat\alpha$,
$z\in\rho(L^*)$
expresses equivalence of the characteristic
functions~$\mathscr S$ and $S$ corresponding to
different choices of boundary spaces and operators.
Both $\mathscr S$ and $S$ functions are equivalent to the characteristic
function of~$L$ written in its abstract form~(\ref{abstractCharFunc}).
It is easy to see that the
boundary operators of Approach 1 and Approach 2
are also related by means of the isometric
mapping~$\widehat\alpha$.

In conclusion, we point out the recent
paper~\cite{BMNW2018}
where the
construction of the selfadjoint dilation and the
functional model
of a dissipative operator
is based entirely on the concept of
Strauss boundary
spaces and operators~(\ref{Psi_L}),
(\ref{BoundaryOperator}) with no reference to their
``concrete'' realizations.

\section{An application: inverse scattering problem for quantum graphs}
\label{sec:theor-appl-funct}

In the present section, we present an application of the theory introduced in Section \ref{sec:ryzh-funct-model}, and in particular of the explicit construction of wave operators and scattering matrices facilitated by the approach based on the functional model due to Sergey Naboko. We first obtain an explicit expression for the scattering matrix of a quantum graph which we take to be the Laplacian on a finite non-compact metric graph, subject to $\delta-$type coupling at graph vertices. Then we present an explicit constructive solution to the inverse scattering problem for this graph, i.e., explicit formulae for the coupling constants at graph vertices. The narrative of this Section is based upon the papers \cite{CherednichenkoKiselevSilva,CherKisSilva1}.

For simplicity of presentation we will only consider the case of a finite non-compact quantum graph, when the deficiency indices are finite. However, the same approach allows us to consider the general setting of infinite deficiency indices, which in  the quantum graph setting leads to an infinite graph. In particular, one could consider the case of an infinite compact part of the graph.

In what follows, we denote by ${\mathbb G}={\mathbb G}({\mathcal E},\sigma)$ a
finite metric graph, {\it i.e.} a collection of a finite non-empty set
${\mathcal E}$ of compact or semi-infinite intervals
$e_j=[x_{2j-1},x_{2j}]$ (for semi-infinite intervals we set
$x_{2j}=+\infty$), $j=1,2,\ldots, n,$ which we refer to as
\emph{edges}, and of a partition $\sigma$ of the set of endpoints
${\mathcal V}:=\{x_k: 1\le k\le 2n,\ x_k<+\infty\}$ into $N$ equivalence classes $V_m,$
$m=1,2,\ldots,N,$ which we call \emph{vertices}:
${\mathcal V}=\bigcup^N_{m=1} V_m.$ The degree, or valence, ${\rm deg}(V_m)$ of the vertex $V_m$ is defined as the number of elements in $V_m,$ {\it i.e.} ${\rm card}(V_m).$
Further, we partition the set ${\mathcal V}$ into the two non-overlapping
sets of \emph{internal} ${\mathcal V}^{({\rm i})}$ and \emph{external}
${\mathcal V}^{({\rm e})}$ vertices, where a vertex $V$ is classed as internal
if it is incident to no non-compact edge and external otherwise. Similarly, we partition the set of edges
${\mathcal E}={\mathcal E}^{({\rm i})}\cup {\mathcal E}^{({\rm e})}$, into the collection of
compact (${\mathcal E}^{({\rm i})}$) and non-compact (${\mathcal E}^{({\rm e})}$)
edges. We assume for simplicity that the number of non-compact edges
incident to any graph vertex is not greater than one.

For a finite metric graph ${\mathbb G},$ we consider the Hilbert spaces
\[
L^2({\mathbb G}):=\bigoplus_{j=1}^n L^2(e_j),\qquad W^{2,2}({\mathbb G}):=\bigoplus_{j=1}^n W^{2,2}(e_j).
\]
Further, for a function $f\in W^{2,2}({\mathbb G})$, we
define
the normal derivative at each vertex along each of the adjacent edges,
as follows:
\begin{equation*}
\partial_n f(x_j):=\left\{ \begin{array}{ll} f'(x_j),&\mbox{ if } x_j \mbox{ is the left endpoint of the edge},\\[0.35em]
-f'(x_j),&\mbox{ if } x_j \mbox{ is the right endpoint of the
edge.}
\end{array}\right.
\end{equation*}
In the case of semi-infinite edges we only apply this
definition at the left endpoint of the edge.

\begin{definition}
For $f\in W^{2,2}({\mathbb G})$
and ${a_m}\in{\mathbb C}$ (below referred to as the ``coupling
constant"), the condition of continuity of the function $f$ through
the vertex $V_m$ ({\it i.e.} $f(x_j)=f(x_k)$ if $x_j,x_k\in V_m$)
together with the condition
\[
\sum_{x_j \in V_m} \partial _n f(x_j)={a_m} f(V_m)
\]
is called the $\delta$-type matching at the vertex $V_m$.
\end{definition}

\begin{remark}
  Note that the $\delta$-type matching condition in a particular case
  when ${a_m}=0$ reduces to the standard Kirchhoff matching
  condition at the vertex $V_m$, see {\it e.g.} \cite{MR3013208}.
\end{remark}

\begin{definition}
\label{def6}
The quantum graph Laplacian $A_a,$ $a:=(a_1,...,a_N),$ on a graph ${\mathbb G}$ with
$\delta$-type
 matching conditions is the operator of minus second derivative $-d^2/dx^2$
 in the Hilbert space $L^2({\mathbb G})$ on the domain of functions that
 belong to the Sobolev space $W^{2,2}({\mathbb G})$ and satisfy the
 $\delta$-type matching conditions at every vertex $V_m$,
 $m=1,2,\dots,N.$ The Schr\"odinger operator on the same graph is
 defined likewise on the same domain in the case of summable edge
 potentials ({\it cf.} \cite{MR3484377}).
\end{definition}

If all coupling constants ${a_m}$, $m=1,\dots, N$, are real, it is
shown that the operator $A_a$ is a proper self-adjoint
extension of a closed
symmetric operator $A$ in $L^2({\mathbb G})$
\cite{MR1459512}. Note that, without loss of generality,
each edge $e_j$ of the graph ${\mathbb G}$ can be considered to be an
interval $[0,l_j]$, where $l_j:=x_{2j}-x_{2j-1}$, $j=1,\dots, n$ is
the length of the corresponding edge. Throughout the present Section we
will therefore only consider this situation.

In \cite{MR3484377}, the following result is obtained for the case of
finite \emph{compact} metric graphs.

\begin{proposition}[\cite{MR3484377}]\label{Prop_M}
  Let ${\mathbb G}$ be a finite compact metric graph with $\delta$-type
  coupling at all vertices. There exists a closed densely defined
  symmetric operator $A$ and a boundary triple such that the
  operator $A_a$ is an almost solvable extension of $A$,
  for which the parametrising matrix $\varkappa$
  is given by
  $\varkappa=\mathrm{diag}\{a_1,\dots,a_N\}$, whereas the Weyl
  function is an $N\times N$ matrix with elements
\begin{equation}\label{Eq_Weyl_Func_Delta}
m_{jk}(z)=
\begin{cases}
  -\sqrt{z}\biggl(\sum\limits_{e_p\in E_k}\cot\sqrt{z} l_p- 2\sum\limits_{e_p\in L_k}\tan\dfrac{\sqrt{z}
    l_p}{2}\biggr),
  & j=k,\\[0.4em]
  \sqrt{z}\sum\limits_{e_p\in C_{jk}}\dfrac{1}{\sin\sqrt{z} l_p},
  & j\not=k;\  V_j, V_k\ \mbox{adjacent},\\[0.5em]
  0, & j\not=k;\ V_j, V_k\ \mbox{non-adjacent}.\\
     \end{cases}
\end{equation}
Here the branch of the square root is chosen so that $\Im\sqrt{z}\geq 0,$
$l_p$ is the length of the edge $e_p$,  $E_k$ is the set of non-loop graph edges
incident to the vertex $V_k$, $L_k$ is the set of loops at the
vertex $V_k,$ and $C_{jk}$ is the set of graph edges
connecting vertices $V_j$ and $V_k.$
\end{proposition}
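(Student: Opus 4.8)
\emph{Proof proposal.} The plan is to exhibit the symmetric operator $A$ together with an explicit boundary triple in the sense of Definition~\ref{def:bonudaryTriple} whose boundary space is $\mathbb{C}^N$ (one coordinate per vertex), and then to read off both the parametrising matrix $\varkappa$ and the Weyl function $M(z)$ directly from the $\delta$-type conditions. The decisive choice is to build the continuity requirement at the vertices \emph{into} the maximal operator rather than into the boundary maps. Concretely, I would let $A^*$ be the operator $-d^2/dx^2$ acting on those $f\in W^{2,2}(\mathbb{G})$ that are continuous through every vertex (so that a single value $f(V_m)$ is well defined), and set
\[
\Gamma_0 f=\bigl(f(V_m)\bigr)_{m=1}^N,\qquad
\Gamma_1 f=\Bigl(\sum_{x_j\in V_m}\partial_n f(x_j)\Bigr)_{m=1}^N .
\]
The operator $A$ is then the restriction of $A^*$ to $\ker\Gamma_0\cap\ker\Gamma_1$; it is symmetric, densely defined (its domain contains $C_0^\infty$ supported in the interiors of the edges) and has deficiency indices $n_\pm(A)=N$.

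First I would verify the two boundary-triple axioms. Green's formula~\eqref{GreenFormula} follows by integrating by parts edge-by-edge: on each $e_p=[0,l_p]$ one has $\int_{e_p}(-f''\bar g+f\bar g'')=[\,\partial_n f\,\bar g-f\,\overline{\partial_n g}\,]$ evaluated at the two endpoints, the sign bookkeeping being exactly absorbed by the definition of $\partial_n$. Regrouping the $2n$ endpoint contributions by vertices and using continuity of $f$ and $g$ to factor out the common values $f(V_m)$, $g(V_m)$ turns the right-hand side into $(\Gamma_1 f,\Gamma_0 g)_{\mathbb{C}^N}-(\Gamma_0 f,\Gamma_1 g)_{\mathbb{C}^N}$, as required. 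Surjectivity of $f\mapsto\Gamma_0 f\oplus\Gamma_1 f$ is a freedom count: a continuous $W^{2,2}$ function carries $N$ independent vertex values and $2n\ge N$ independent endpoint derivatives, and since each vertex has positive degree the derivative sums can be prescribed independently of the values. With the triple in hand, the $\delta$-type matching at $V_m$ reads precisely $\Gamma_1 f=\varkappa\,\Gamma_0 f$ with $\varkappa=\diag\{a_1,\dots,a_N\}$, so $A_a$ coincides with the almost solvable extension $A_\varkappa$ and the first two assertions follow.

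For the Weyl function I would use its defining relation $M(z)\Gamma_0 f_z=\Gamma_1 f_z$ for $f_z\in\ker(A^*-z)$, valid for $z$ off the Dirichlet spectrum $\{(k\pi/l_p)^2\}$ (in particular for $z\in\mathbb{C}\setminus\mathbb{R}$, where $\sin(\sqrt{z}l_p)\neq0$). On an edge $e_p=[0,l_p]$ joining $V_j$ and $V_k$ the solution of $-f''=zf$ continuous at both ends is determined by $f(0)=(\Gamma_0 f_z)_j$ and $f(l_p)=(\Gamma_0 f_z)_k$; writing $f=c_1\cos(\sqrt{z}\,\cdot)+c_2\sin(\sqrt{z}\,\cdot)$ and computing the two normal derivatives gives, for the contribution of this edge to the $j$-th component of $\Gamma_1 f_z$, the coefficient $-\sqrt{z}\cot(\sqrt{z}l_p)$ in front of $(\Gamma_0f_z)_j$ and $\sqrt{z}/\sin(\sqrt{z}l_p)$ in front of $(\Gamma_0f_z)_k$. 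Summing over all edges incident to $V_j$ produces the stated off-diagonal entries and the $\cot$ part of the diagonal; a loop at $V_j$ contributes through both of its endpoints, and with $f(0)=f(l_p)$ the two normal derivatives add up to $2\sqrt{z}\,(1-\cos\sqrt{z}l_p)/\sin(\sqrt{z}l_p)=2\sqrt{z}\tan(\sqrt{z}l_p/2)$ by the half-angle identity, which is exactly the loop term in~\eqref{Eq_Weyl_Func_Delta}. Assembling these contributions yields $M(z)$.

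The genuinely delicate point is not the explicit integration but the first step: justifying that the reduced boundary space $\mathbb{C}^N$ --- rather than the naive $\mathbb{C}^{2n}$ attached to the fully decoupled minimal operator --- defines a bona fide boundary triple, i.e.\ that continuity may legitimately be incorporated into $\dom A^*$ with $(A)^*$ still equal to this maximal operator and with surjectivity intact. Once this is settled, identifying $\varkappa$ and evaluating $M(z)$ are essentially bookkeeping; within the computation the only places demanding care are the loop contribution and the sign conventions in $\partial_n$.
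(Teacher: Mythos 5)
Your construction is correct and coincides with the standard argument of the cited source \cite{MR3484377} (the survey itself only quotes this proposition without proof): one builds vertex continuity into $\dom A^*$, takes $\Gamma_0$ to be the vector of vertex values and $\Gamma_1$ the vector of summed normal derivatives, checks Green's formula and surjectivity, and evaluates $M(z)$ edgewise, with the loop term arising exactly from the half-angle identity as you describe. Your sign bookkeeping for $-\sqrt{z}\cot(\sqrt{z}l_p)$, $\sqrt{z}/\sin(\sqrt{z}l_p)$ and $+2\sqrt{z}\tan(\sqrt{z}l_p/2)$ checks out against \eqref{Eq_Weyl_Func_Delta}, and the one point you flag as delicate (that the reduced boundary space ${\mathbb C}^N$ genuinely yields a boundary triple, i.e.\ that the adjoint of $A=A^*|_{\ker\Gamma_0\cap\ker\Gamma_1}$ is the continuity-constrained maximal operator) is indeed the only step requiring a separate verification.
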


In \cite{CherednichenkoKiselevSilva} this is extended to non-compact metric graphs as follows.
Denote by ${\mathbb G}^{({\rm i})}$ the compact part of the graph
${\mathbb G}$, {\it i.e.} the graph ${\mathbb G}$ with all the non-compact edges
removed. Proposition \ref{Prop_M} yields an expression for the Weyl
function $M^{({\rm i})}$ pertaining to the graph ${\mathbb G}^{({\rm i})}$.

\begin{lemma}
The matrix functions $M,$ $M^{({\rm i})}$ described above are related by the formula
\begin{equation}
M(z)=M^{({\rm i})}(z) + {\rm i}\sqrt{z} P_{\rm e},\quad\quad z\in{\mathbb C}_+,
\label{M_Mi}
\end{equation}
where $P_{\rm e}$ is the orthogonal projection in the boundary space
$\mathcal K$ onto the set of external vertices $V_{\mathbb G}^{({\rm e})}$, {\it
  i.e.} the matrix $P_{\rm e}$ such that $(P_{\rm e})_{ij}=1$ if $i=j,$ $V_i\in V_{\mathbb G}^{({\rm e})},$ and
$(P_{\rm e})_{ij}=0$ otherwise.
\end{lemma}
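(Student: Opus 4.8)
The plan is to compute $M(z)$ directly from Definition~\ref{def:WeylFunction}, exploiting the fact that the only difference between ${\mathbb G}$ and ${\mathbb G}^{({\rm i})}$ is the presence of the semi-infinite edges attached at external vertices, together with the standing assumption that each external vertex carries at most one such edge. Fix $z\in{\mathbb C}_+$ and let $f_z\in\ker(A^*-zI)$ be a Weyl solution with vertex data $\Gamma_0 f_z=\varphi\in\mathcal K=\mathbb{C}^N$, so that by definition $M(z)\varphi=\Gamma_1 f_z$, the vector of sums of normal derivatives at the vertices. Since $f_z$ solves $-f_z''=zf_z$ on every edge and lies in $L^2({\mathbb G})$, on each non-compact edge (parametrised as $[0,\infty)$ with the vertex at $x=0$) it must coincide with the decaying exponential $f_z(x)=\varphi_m\,{\rm e}^{{\rm i}\sqrt{z}\,x}$, where $\varphi_m$ is the value at the incident external vertex $V_m$ and the branch $\Im\sqrt{z}\geq 0$ guarantees square-integrability.

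First I would split the sum defining $(\Gamma_1 f_z)_m$ at each vertex into the contributions of the compact edges and of the (at most one) non-compact edge:
\[
(\Gamma_1 f_z)_m=\sum_{x_j\in V_m,\ e_j\in{\mathcal E}^{({\rm i})}}\partial_n f_z(x_j)
\;+\;\sum_{x_j\in V_m,\ e_j\in{\mathcal E}^{({\rm e})}}\partial_n f_z(x_j).
\]
For the non-compact term, the formula above gives $\partial_n f_z(0)=f_z'(0)={\rm i}\sqrt{z}\,\varphi_m$; this term is present precisely when $V_m$ is external and is absent otherwise, which is exactly the action of ${\rm i}\sqrt{z}\,P_{\rm e}$ on $\varphi$. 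It therefore remains to identify the compact-edge sum with $(M^{({\rm i})}(z)\varphi)_m$.

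The crux of the argument, and the step I expect to require the most care, is the observation that the restriction of $f_z$ to the compact part ${\mathbb G}^{({\rm i})}$ coincides with the Weyl solution of the compact graph carrying the same Dirichlet data $\varphi$. The point is that on each compact edge joining $V_j$ and $V_k$ the solution of $-y''=zy$ is determined (for generic $z$, i.e.\ away from the Dirichlet eigenvalues of the edge) by its two endpoint values $\varphi_j,\varphi_k$ alone; the continuity conditions couple the compact and non-compact edges at an external vertex only through the shared value $\varphi_m$, so removing the non-compact edges leaves the compact-edge solutions unchanged. Consequently the compact-edge sum is, by Definition~\ref{def:WeylFunction} applied on ${\mathbb G}^{({\rm i})}$ (whose Weyl function is the explicit matrix of Proposition~\ref{Prop_M}), exactly $(M^{({\rm i})}(z)\varphi)_m$. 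To make this rigorous one must check that the two boundary triples are compatible, namely that $M^{({\rm i})}$ is computed with the same $\Gamma_0$ (vertex values) and with $\Gamma_1$ restricted to normal derivatives over compact edges, and that passing to ${\mathbb G}^{({\rm i})}$ does not alter the relevant solution space. Assembling the two contributions over all vertices then yields $M(z)\varphi=M^{({\rm i})}(z)\varphi+{\rm i}\sqrt{z}\,P_{\rm e}\varphi$ for arbitrary $\varphi\in\mathbb{C}^N$, which is precisely the identity~\eqref{M_Mi}.
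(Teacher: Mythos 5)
Your proposal is correct and is essentially the expected argument: the paper itself states this lemma with a citation rather than an inline proof, but the intended derivation is exactly your decomposition of $\Gamma_1 f_z$ at each vertex into compact-edge and non-compact-edge contributions, with the $L^2$ condition forcing the decaying exponential $\varphi_m{\rm e}^{{\rm i}\sqrt z x}$ on each ray and hence the extra term ${\rm i}\sqrt z\,P_{\rm e}$. One small remark: the caveat about avoiding Dirichlet eigenvalues of the compact edges is automatic for $z\in{\mathbb C}_+$ since those eigenvalues are real, so the restriction of $f_z$ to ${\mathbb G}^{({\rm i})}$ is always the Weyl solution of the compact graph with the same vertex data.
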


The formula  (\ref{M_Mi}) leads to
$M(s)-M^*(s)=2{\rm i}\sqrt{s}P_{\rm e}$ a.e. $s\in{\mathbb R}$, and the
expression (\ref{scat2}) for $\widehat{\Sigma}$ leads to the classical
scattering matrix $\widehat{\Sigma}_{\rm e}(k)$ of the pair of operators $A_0$ (which is the Laplacian on the graph ${\mathbb G}$ with standard Kirchhoff matching at all the vertices) and $A_\varkappa,$ where $\varkappa=\varkappa=\mathrm{diag}\{a_1,\dots,a_N\}:$
\begin{equation}
\widehat{\Sigma}_{\rm e}(s)=
P_{\rm e} (M(s)-\varkappa)^{-1}(M(s)^*-
\varkappa)(M(s)^*)^{-1}M(s) P_{\rm e},\ \ \ \ s\in{\mathbb R},
\label{sigma_hat}
\end{equation}
which acts as the operator of multiplication in the space
$L^2(P_{\rm e}{\mathcal K}; 4\sqrt{s}ds)$.

We remark that in the more common approach to the construction of scattering matrices, based on comparing the asymptotic expansions of solutions to spectral equations, see {\it e.g.} \cite{{Faddeev_additional}}, one obtains
$\widehat{\Sigma}_{\rm e}$ as the scattering matrix. Our approach yields an explicit factorisation of
$\widehat{\Sigma}_{\rm e}$
into expressions involving the matrices $M$ and $\varkappa$ only, sandwiched between two projections. (Recall that  $M$ and $\varkappa$ contain the information about the geometry of the graph and the coupling constants, respectively.) From the same formula (\ref{sigma_hat}), it is obvious that without the factorisation the pieces of information pertaining to the geometry of the graph and the coupling constants at the vertices are present in the final answer in an entangled form.

We reiterate that the analysis above pertains not only to the cases when the coupling constants are real, leading to self-adjoint operators $A_a,$ but also to the case of non-selfadjoint extensions, {\it cf.} Theorem \ref{thm:existence-completeness-wave-operators}.

In what follows we often drop the argument $s\in{\mathbb R}$ of the
Weyl function $M$ and the scattering matrices $\widehat{\Sigma},$
$\widehat{\Sigma}_{\rm e}.$

It is easily seen that
a factorisation of $\widehat{\Sigma}_{\rm e}$ into a product of
$\varkappa-$dependent and $\varkappa-$independent factors ({\it cf.}
(\ref{scat2})) still holds in this case in $P_{\rm e} {\mathcal K},$ namely
\begin{equation}
\widehat{\Sigma}_{\rm e}=
\bigl[P_{\rm e} (M-\varkappa)^{-1}(M^*-\varkappa)P_{\rm e}\bigr]\bigl[P_{\rm e}(M^*)^{-1}M P_{\rm e}\bigr].
\label{bigstar}
\end{equation}

 We will now exploit the above approach in the analysis of the inverse
scattering problem for Laplace operators on finite metric graphs,
whereby the scattering matrix $\widehat{\Sigma}_{\rm e}(s),$ defined by (\ref{bigstar}), is assumed
to be known for almost all positive ``energies'' $s\in{\mathbb R},$
along with the graph ${\mathbb G}$ itself. The data to be determined is the
set of coupling constants $\{{a_j}\}_{j=1}^N$. For simplicity, in what follows we treat the inverse problem for graphs with
real coupling constants, which corresponds to self-adjoint operators.

First, for given $M,$ $\widehat{\Sigma}_{\rm e}$ we reconstruct the expression $P_{\rm e}(M^{({\rm i})}-\varkappa)^{-1}P_{\rm e}$ for almost all $s>0:$
\begin{equation}
P_{\rm e}(M^{({\rm i})}-\varkappa)^{-1}P_{\rm e}
=\frac{1}{{\rm i}\sqrt{s}}\biggl(2\bigl(P_{\rm e}+\widehat{\Sigma}_{\rm e}[P_{\rm e}(M^*)^{-1}MP_{\rm e}]^{-1}\bigr)^{-1}-I\biggr)P_{\rm e}.
\label{DtD}
\end{equation}
In particular, due to the property of analytic continuation, the expression $P_{\rm e}(M^{({\rm i})}-\varkappa)^{-1}P_{\rm e}$ is determined uniquely in the whole of $\mathbb C$ with the
exception of a countable set of poles, which coincides with the
set of eigenvalues of the self-adjoint Laplacian $A_\varkappa^{({\rm i})}$ on the
compact part ${\mathbb G}^{({\rm i})}$ of the graph ${\mathbb G}$ with matching
conditions at the graph vertices given by the matrix $\varkappa,$ {\it cf.} Proposition \ref{Prop_M}.

\begin{definition}
\label{definition7}
Given a partition ${\mathcal V}_1\cup{\mathcal V}_2$ of the set of graph vertices, for $z\in{\mathbb C}$ consider the linear set $U(z)$ of functions that satisfy the differential equation $-u_z''=zu_z$ on each edge, subject to the conditions of continuity at all vertices of the graph and the $\delta$-type matching conditions at the vertices in the set ${\mathcal V}_2.$ For each function $f\in U(z),$ consider the vectors
\[
\Gamma_1^{{\mathcal V}_1}u_z:=\Bigl\{\sum_{x_j \in V_m} \partial _n f(x_j)\Bigr\}_{V_m\in{\mathcal V}_1},\quad\quad\Gamma_0^{{\mathcal V}_1}u_z:=\bigl\{f(V_m)\bigr\}_{V_m\in{\mathcal V}_1}.
\]
The {\it Robin-to-Dirichlet map} of the set ${\mathcal V}_1$ maps the vector  $(\Gamma_1^{{\mathcal V}_1}-\varkappa^{{\mathcal V}_1}\Gamma_0^{{\mathcal V}_1})u_z$ to $\Gamma_0^{{\mathcal V}_1}u_z,$ where $\varkappa^{{\mathcal V}_1}:=\diag \{a_m:\ V_m\in{\mathcal V}_1\}$. (Note that the function $u_z\in U(z)$ is determined uniquely by $(\Gamma_1^{{\mathcal V}_1}-\varkappa^{{\mathcal V}_1}\Gamma_0^{{\mathcal V}_1})u_z$ for all $z\in\mathbb C$ except a countable set of real points accumulating to infinity).
\end{definition}

The above definition is a natural generalisation of the corresponding definitions of Dirichlet-to-Neumann and Neumann-to-Dirichlet maps pertaining to the graph boundary, considered
  in {\it e.g.} \cite{MR3013208}, \cite{MR2600145}.

We argue that the matrix $P_{\rm e}(M^{({\rm i})}-\varkappa)^{-1}P_{\rm e}$ is the Robin-to-Dirichlet map
for the set ${\mathcal V}^{({\rm e})}.$ Indeed, assuming $\phi:=\Gamma_1 u_z-\varkappa \Gamma_0 u_z$ and $\phi=P_{\rm e} \phi,$ where the latter condition ensures the correct $\delta$-type matching on the set $\mathcal{V}^{({\rm i})},$ one has
$P_{\rm e}\phi=(M^{({\rm i})}-\varkappa)\Gamma_0 u_z$ and hence $\Gamma_0 u_z=(M^{({\rm i})}-\varkappa)^{-1}P_{\rm e} \phi$. Applying $P_{\rm e}$ to the last identity yields the claim, in accordance with Definition \ref{definition7}.


We thus have the following theorem.
\begin{theorem}
\label{thm9.1}

  The Robin-to-Dirichlet map for the vertices ${\mathcal V}^{({\rm e})}$
  is determined uniquely by the scattering matrix
  $\widehat{\Sigma}_{\rm e}(s),$ $s\in{\mathbb R},$ via the formula (\ref{DtD}).

\end{theorem}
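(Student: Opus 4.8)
The plan is to establish the reconstruction formula (\ref{DtD}) by a purely algebraic inversion of the factorised scattering matrix (\ref{bigstar}), and then to combine it with the identification of $P_{\rm e}(M^{({\rm i})}-\varkappa)^{-1}P_{\rm e}$ as the Robin-to-Dirichlet map carried out in the paragraph preceding the theorem, finally invoking analytic continuation for the uniqueness claim. First I would introduce the auxiliary matrix $T:=\widehat{\Sigma}_{\rm e}\bigl[P_{\rm e}(M^*)^{-1}MP_{\rm e}\bigr]^{-1}$, understood as an operator on $P_{\rm e}\mathcal{K}$, which by the factorisation (\ref{bigstar}) equals $P_{\rm e}(M-\varkappa)^{-1}(M^*-\varkappa)P_{\rm e}$ for almost all $s\in\mathbb{R}$ (away from the discrete set where the relevant factors cease to be invertible).

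Next I would exploit the relation (\ref{M_Mi}), which on the real axis gives $M^*-M=-2{\rm i}\sqrt{s}P_{\rm e}$, hence $M^*-\varkappa=(M-\varkappa)-2{\rm i}\sqrt{s}P_{\rm e}$. Substituting this into the expression for $T$ collapses the product,
\[
(M-\varkappa)^{-1}(M^*-\varkappa)=I-2{\rm i}\sqrt{s}\,(M-\varkappa)^{-1}P_{\rm e},
\]
so that $T=P_{\rm e}-2{\rm i}\sqrt{s}\,P_{\rm e}(M-\varkappa)^{-1}P_{\rm e}$. Denoting $R_1:=P_{\rm e}(M-\varkappa)^{-1}P_{\rm e}$ and $R_2:=P_{\rm e}(M^{({\rm i})}-\varkappa)^{-1}P_{\rm e}$ as operators on $P_{\rm e}\mathcal{K}$, this reads $R_1=\tfrac{1}{2{\rm i}\sqrt{s}}(I-T)$. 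The remaining step relates $R_1$ and $R_2$: since by (\ref{M_Mi}) the operators $M-\varkappa$ and $M^{({\rm i})}-\varkappa$ differ precisely by ${\rm i}\sqrt{s}P_{\rm e}$, whose range lies in $\ran P_{\rm e}$, the second resolvent (Kre\u{\i}n) identity sandwiched between $P_{\rm e}$ closes within $P_{\rm e}\mathcal{K}$ and yields $R_2-R_1={\rm i}\sqrt{s}\,R_2R_1$, i.e. $R_2=R_1(I-{\rm i}\sqrt{s}R_1)^{-1}$. Substituting $R_1=\tfrac{1}{2{\rm i}\sqrt{s}}(I-T)$ gives $I-{\rm i}\sqrt{s}R_1=\tfrac{1}{2}(I+T)$, whence
\[
R_2=\tfrac{1}{{\rm i}\sqrt{s}}(I-T)(I+T)^{-1}=\tfrac{1}{{\rm i}\sqrt{s}}\bigl(2(I+T)^{-1}-I\bigr),
\]
which is exactly (\ref{DtD}). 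The identification of the left-hand side with the Robin-to-Dirichlet map for $\mathcal{V}^{({\rm e})}$ (Definition~\ref{definition7}) then gives the reconstruction, and uniqueness over $s\in\mathbb{R}$ follows because $s\mapsto P_{\rm e}(M^{({\rm i})}-\varkappa)^{-1}P_{\rm e}$ extends analytically, determining the map away from its poles at the eigenvalues of $A_\varkappa^{({\rm i})}$.

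The main obstacle is not the algebra but the justification of every inversion: one must verify that $P_{\rm e}(M^*)^{-1}MP_{\rm e}$, $I+T$, and $I-{\rm i}\sqrt{s}R_1$ are invertible for almost all $s$, and that the reduction to $P_{\rm e}\mathcal{K}$ is legitimate, in particular that the internal Schur block $P_{\rm i}(M^{({\rm i})}-\varkappa)P_{\rm i}$ (with $P_{\rm i}=I-P_{\rm e}$) is invertible; this fails exactly on the discrete spectrum of the compact-part Laplacian and forces all identities to be read a.e. in $s$, with the exceptional set removed by analytic continuation. The key structural fact making the sandwiched resolvent identity close within $P_{\rm e}\mathcal{K}$ is that the perturbation ${\rm i}\sqrt{s}P_{\rm e}$ is supported on $\ran P_{\rm e}$, which is precisely what allows the whole computation to be carried out in the finite-dimensional external space without ever referring to the internal vertices.
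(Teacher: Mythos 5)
Your proposal is correct and follows essentially the same route as the paper, which simply asserts the reconstruction formula (\ref{DtD}) as an algebraic inversion of (\ref{bigstar}) using (\ref{M_Mi}) and identifies $P_{\rm e}(M^{({\rm i})}-\varkappa)^{-1}P_{\rm e}$ with the Robin-to-Dirichlet map in the paragraph preceding the theorem. Your explicit computation --- $T=P_{\rm e}-2{\rm i}\sqrt{s}\,R_1$, the sandwiched resolvent identity $R_2-R_1={\rm i}\sqrt{s}\,R_2R_1$ closing in $P_{\rm e}\mathcal{K}$ because the perturbation factors through $P_{\rm e}$, and the resulting $R_2=({\rm i}\sqrt{s})^{-1}\bigl(2(I+T)^{-1}-I\bigr)$ --- checks out and lands exactly on (\ref{DtD}), with the invertibility caveats correctly relegated to an a.e.\ statement repaired by analytic continuation.
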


The following definition, required for the formulation of the next theorem, is a generalisation of the procedure of graph contraction, well studied in the algebraic graph theory, see {\it e.g.} \cite{Tutte}.

\begin{definition}[Contraction procedure for graphs and associated quantum graph Laplacians]
\label{def8}
For a given graph ${\mathbb G}$ vertices $V$ and $W$ connected  by an edge $e$ are ``glued'' together to form a
new vertex $(VW)$
of the contracted graph $\widetilde{\mathbb G}$ while simultaneously the edge $e$ is removed,  whereas the rest of the graph remains
unchanged. We do allow the situation of multiple edges, when $V$ and
$W$ are connected in ${\mathbb G}$ by more than one edge, in which case all such edges but the edge $e$ become loops of their
respective lengths attached to the vertex $(VW)$. The corresponding quantum graph Laplacian $A_a$ defined on ${\mathbb G}$ is contracted to the
quantum graph Laplacian $\widetilde{A}_{\widetilde{a}}$ by the application of the following rule pertaining to the coupling constants:
a coupling constant
at any unaffected vertex remains the same, whereas the coupling
constant at the new vertex $(VW)$ is set to be the sum of the coupling constants at $V$ and $W.$
Here it is always assumed that all quantum graph Laplacians are described by Definition \ref{def6}.
\end{definition}

\begin{theorem}
\label{NtD_NtD}
Suppose that the edge lengths of the graph ${\mathbb G}^{({\rm i})}$ are rationally independent.
The element\footnote{By renumbering if necessary, this does not lead to loss of generality.} $(1,1)$  of the Robin-to-Dirichlet map described above yields the element $(1,1)$ of the ``contracted'' graph
$\widetilde{\mathbb G}^{({\rm i})}$ obtained from the graph ${\mathbb G}^{({\rm i})}$ by removing a non-loop edge $e$
emanating from $V_1.$ The procedure of passing from the graph ${\mathbb G}^{({\rm i})}$ to the contracted graph
$\widetilde{\mathbb G}^{({\rm i})}$ is given in Definition \ref{def8}.

\end{theorem}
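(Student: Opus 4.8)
The plan is to reduce the assertion to an explicit scalar identity relating the $(1,1)$ entry of the Robin-to-Dirichlet map of $\mathbb{G}^{(\mathrm{i})}$ to that of the contracted graph, in which the edge $e$ enters only through two known functions, and then to use the rational independence of the edge lengths to separate the contribution of $e$. First I would relabel so that $e$ joins the external vertex $V_1$ to a vertex $V_2$ (assuming, as one may after elementary bookkeeping, that $e$ is the only edge joining them), and write $\mathcal{M}(z):=M^{(\mathrm{i})}(z)-\varkappa$ in block form for the splitting $\{V_1\}\cup\{V_2\}\cup(\text{rest})$. By Proposition \ref{Prop_M} the edge $e$ enters $\mathcal{M}$ only through $w:=\sqrt{z}\cot(\sqrt{z}\,l_e)$ (in the $(1,1)$ and $(2,2)$ entries, with a minus sign) and $s:=\sqrt{z}/\sin(\sqrt{z}\,l_e)$ (in the $(1,2)$ entry), which satisfy $s^2-w^2=z$. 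Writing $\mathcal{M}_{11}=\alpha-w$, $\mathcal{M}_{22}=\delta-w$, $\mathcal{M}_{12}=s$, letting $c,f$ be the columns coupling $V_1,V_2$ to the remaining vertices and $D$ the remaining block, and setting $A:=\alpha-c^\top D^{-1}c$, $B:=\delta-f^\top D^{-1}f$, $q:=c^\top D^{-1}f$ (all independent of $l_e$), Cramer's rule gives
\[
g(z):=[(M^{(\mathrm{i})}-\varkappa)^{-1}]_{11}=\frac{B-w}{(AB-z-q^2)-(A+B)w+2qs}.
\]

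Next I would compute the target. Merging $V_1,V_2$ and replacing $a_1,a_2$ by $a_1+a_2$ (Definition \ref{def8}) produces, by Proposition \ref{Prop_M}, the contracted matrix $\widetilde{\mathcal{M}}=\widetilde{M}^{(\mathrm{i})}-\widetilde{\varkappa}$ with merged diagonal entry $\alpha+\delta$ and coupling $c+f$ to the remaining vertices; the same Schur complement gives
\[
\tilde{g}(z):=[(\widetilde{M}^{(\mathrm{i})}-\widetilde{\varkappa})^{-1}]_{11}=\frac{1}{A+B-2q}.
\]
Hence the whole statement reduces to extracting the single combination $A+B-2q$ from $g$, in which it is entangled with the explicitly known functions $w,s$.

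The decisive step is to evaluate $g$ at the nodes $z_m:=(2m\pi/l_e)^2$, where $\sin(\sqrt{z}\,l_e)\to 0$ and $\cos(\sqrt{z}\,l_e)\to+1$, so that $s-w=\sqrt{z}\tan(\sqrt{z}\,l_e/2)\to 0$ while $w,s\to\infty$. In the displayed formula for $g$ the numerator is then $\sim -w$ and the denominator $\sim -(A+B-2q)\,w$, so $z_m$ is a removable singularity and $g(z_m)=1/(A+B-2q)(z_m)=\tilde{g}(z_m)$; at the odd multiples $\cos\to-1$ and one obtains $1/(A+B+2q)$ instead, so the even nodes are the relevant ones. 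Because the lengths of $\mathbb{G}^{(\mathrm{i})}$ are rationally independent, the phases $\sqrt{z_m}\,l_p=2m\pi\,l_p/l_e$ of the remaining edges equidistribute modulo $\pi$, so the values $\{g(z_m)\}_{m\ge1}$ sample $\tilde{g}$ on a set that, together with the almost-periodic meromorphic structure of $\tilde{g}$ inherited from Proposition \ref{Prop_M}, recovers $\tilde{g}$ throughout $\mathbb{C}$.

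The main obstacle I anticipate is exactly this disentanglement. One must know that $l_e$, and hence the nodes $z_m$, can be identified and that they never collide with poles produced by the other edges — both guaranteed by rational independence, since a collision would force a rational relation among the lengths — and, more seriously, that the sampled values genuinely reconstruct the full meromorphic function $\tilde{g}$ and not merely its restriction to the discrete set $\{z_m\}$. I expect this to follow from the rigidity of Weyl functions of the form given in Proposition \ref{Prop_M} under rational independence (equivalently, from an almost-periodicity argument isolating the frequency module generated by the remaining lengths), whereas the Schur-complement identities of the first two paragraphs are entirely routine.
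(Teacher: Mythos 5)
Your Schur-complement algebra is correct, and the observation at its heart is genuinely attractive: with $w=\sqrt z\cot(\sqrt z\,l_e)$, $s=\sqrt z/\sin(\sqrt z\,l_e)$ one indeed has $g=(B-w)/\bigl((AB-z-q^2)-(A+B)w+2qs\bigr)$ and $\tilde g=1/(A+B-2q)$, and at $z_m=(2m\pi/l_e)^2$ the combination $s-w=\sqrt z\tan(\sqrt z\,l_e/2)$ vanishes while $w\to\infty$, so $g(z_m)=\tilde g(z_m)$. This exploits exactly the same cancellation as the paper's proof, which instead sends $l_1\to0$ (there too $w,s\sim1/l_1$ with $s-w\to0$), multiplying numerator and denominator of the determinant ratio by $-l_1$ and using row and column operations to identify the limiting denominator with $\det(\widetilde M^{({\rm i})}-\widetilde\varkappa)$. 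So the algebraic core of your argument is sound and is, in substance, the same mechanism accessed from a different direction.

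The genuine gap is your final step. Knowing $\tilde g$ on the discrete real set $\{z_m\}$ is not the same as knowing the function $\tilde g$, and the reconstruction you invoke (equidistribution of the phases plus ``the almost-periodic meromorphic structure of $\tilde g$'') is precisely the non-trivial point and is asserted rather than proved. Note that $\tilde g(k^2)$ depends on $k$ both through the equidistributing phases $kl_p$ and explicitly through the prefactors $\sqrt z=k$ in (\ref{Eq_Weyl_Func_Delta}), and that $k_m\to\infty$, so this is not a standard sampling or uniqueness statement for almost-periodic functions; moreover the subsequent Theorem \ref{last_theorem} needs $f_1^{(1)}$ along the ray $\sqrt z={\rm i}\tau$, $\tau\to+\infty$, far from your real sample points, so the reconstruction cannot be sidestepped (and attempting it by first solving for the finitely many unknown coupling constants would beg the question of the inverse problem). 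The paper spends the rational-independence hypothesis at a different place: it first recovers $f_1$ as a function of $\sqrt z$ \emph{and} of the individual lengths $l_1,\dots,l_n$ (a frequency-module disentanglement), after which $\lim_{l_1\to0}$ is a well-defined operation on known data, evaluated by the determinant computation. If you replace your sampling step by the analogous disentanglement --- separating, in $g(z)$ viewed as a function of $z$, the terms carrying the frequency $l_e$ (namely $w$ and $s$) from the $l_e$-independent almost-periodic coefficients $A$, $B$, $q$ --- your argument closes and becomes essentially equivalent to the paper's; as written, it does not.
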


\begin{proof}

Due to the assumption
that the edge lengths of the graph ${\mathbb G}^{({\rm i})}$ are rationally
independent, the element (1,1), which we denote by $f_1,$ is expressed explicitly as a function of
$\sqrt{z}$ \emph{and} all the edge lengths $l_j,$ $j=1,2,\dots, n,$ in particular, of the length of the edge $e,$ which we assume  to be $l_1$ without loss of generality. This is
an immediate consequence of the explicit form of the matrix
$M^{({\rm i})},$ see (\ref{Eq_Weyl_Func_Delta}). Again without loss of generality, we also assume that the edge $e$ connects the vertices $V_1$ and $V_2.$

Further, consider the expression $\lim_{l_1\to 0}
f_1(\sqrt{z}; l_1,\dots,l_n; a)$. On the one hand, this limit is known from the explicit expression for $f_1$ mentioned above. On the
other hand, $f_1$ is the ratio of the determinant ${\mathcal D}^{(1)}(\sqrt{z}; l_1, \dots, l_n; a)$ of the principal minor of the matrix $M^{({\rm i})}(z)-\varkappa$ obtained by removing its first row and and first column and the
determinant of $M^{({\rm i})}(z)-\varkappa$ itself:
\[
f_1(\sqrt{z}; l_1,\dots,l_n; a)=\frac{{\mathcal D}^{(1)}(\sqrt{z}; l_1, \dots, l_n; a)}{{\rm det}\bigl(M^{({\rm i})}(z)-\varkappa\bigr)}
\]
Next, we multiply by $-l_1$ both the numerator and denominator of this ratio, and pass to the limit in each of them separately:
\begin{equation}
\lim_{l_1\to0}f_1(\sqrt{z}; l_1,\dots,l_n; a)=\frac{\lim\limits_{l_1\to0}(-l_1){\mathcal D}^{(1)}(\sqrt{z}; l_1, \dots, l_n; a)}{\lim\limits_{l_1\to0}(-l_1){\rm det}\bigl(M^{({\rm i})}(z)-\varkappa\bigr)}
\label{bulky_ratio}
\end{equation}
The numerator of (\ref{bulky_ratio}) is easily computed as the
determinant ${\mathcal D}^{(2)}(z; l_1, \dots, l_n; a)$ of the minor of $M^{({\rm i})}(z)-\varkappa$ obtained by removing its first two rows and first two columns.

As for the denominator of (\ref{bulky_ratio}), we add to the second row of the matrix
$M^{({\rm i})}(z)-\varkappa$ its first row multiplied by $\cos(\sqrt{z}l_1),$ which leaves the determinant unchanged.  This operation, due to the identity
\[
-\cot(\sqrt{z}l_1)\cos(\sqrt{z}l_1)+\frac{1}{\sin(\sqrt{z}l_1)}=\sin(\sqrt{z}l_1),
\]
cancels out the singularity of all matrix elements of the second row at the point $l_1=0.$ We introduce the factor $-l_1$ ({\it cf.} \ref{bulky_ratio}) into the first row and pass to the limit as $l_1\to 0.$ Clearly, all rows but the first are regular at $l_1=0$ and hence converge to their limits as $l_1\to0.$ Finally, we add to the second column of the limit its first column, which again does not affect the determinant, and note that the first row of the resulting matrix has
one non-zero element, namely the $(1,1)$ entry. This procedure reduces the denominator in (\ref{bulky_ratio}) to the determinant of a matrix of the size reduced by one.  As in \cite{MR3430381}, it is checked that this determinant is nothing but ${\rm det}(\widetilde{M}^{({\rm i})}-\widetilde\varkappa)$, where $\widetilde M^{({\rm i})}$ and $\widetilde\varkappa$ are the Weyl matrix and the (diagonal) matrix of coupling constants pertaining to the contracted graph $\widetilde{\mathbb G}^{({\rm i})}.$ This immediately implies that the ratio obtained as a result of the above procedure coincides with the entry (1,1) of the matrix $(\widetilde M^{({\rm i})}-\widetilde\varkappa)^{-1},$ {\it i.e.}
\begin{equation*}
\lim_{l_1\to0}f_1(\sqrt{z}; l_1,\dots,l_n; a)=f_1^{(1)}(\sqrt{z}; l_2,\dots,l_n; \widetilde{a}),
\end{equation*}
where $f_1^{(1)}$ is the element (1,1) of the Robin-to-Dirichlet map of the contracted graph $\widetilde{\mathbb G}^{({\rm i})},$ and $\widetilde{a}$ is given by Definition \ref{def8}.
\end{proof}

The main result of this section is the theorem below, which is obtained as a corollary of Theorems \ref{thm9.1} and \ref{NtD_NtD}. We assume without loss of generality that $V_1\in{\mathcal V}^{{\rm (e)}}$ and denote by $f_1(\sqrt{z})$ the (1,1)-entry of the Robin-to-Dirichlet map for the set ${\mathcal V}^{{\rm (e)}}.$ We set the following notation. Fix a spanning tree ${\mathbb T}$ (see {\it e.g.} \cite{Tutte}) of the graph ${\mathbb G}^{({\rm i})}.$ We let the vertex $V_1$ to be the root of $\mathbb T$ and assume, again without loss of generality, that the number of edges in the path $\gamma_m$ connecting $V_m$ and the root is a non-decreasing function of $m.$
Denote by $N^{(m)}$ the number of vertices in the path $\gamma_m,$
and by $\bigl\{l^{(m)}_k\bigr\},$ $k=1,\dots, N^{(m)}-1,$ the associated sequence of lengths of the edges in $\gamma_m,$ ordered along the path from the root $V_1$ to
$V_m.$ Note that each of the lengths $l^{(m)}_k$ is clearly one of the edge lengths $l_j$ of the compact part of the original graph ${\mathbb G}.$

\begin{theorem}
\label{last_theorem}
Assume that the graph ${\mathbb G}$ is connected and the lengths of its compact edges are rationally independent. Given the scattering matrix $\widehat{\Sigma}_{\rm e}(s),$ $s\in{\mathbb R},$ the Robin-to-Dirichlet map for the set ${\mathcal V}^{{\rm (e)}}$ and the matrix of coupling constants $\varkappa$ are determined constructively in a unique way. Namely,
the following formulae hold for $l=1, 2,\dots, N$ and determine $a_m,$ $m=1,\dots, N:$
\[
\sum_{m: V_m\in\gamma_l}a_m
=\lim_{\tau\to+\infty}\Biggl\{-\tau\Bigl(\sum_{V_m\in\gamma_l}{\rm deg} (V_m)-2(N^{(l)}-1)\Bigr)-\frac{1}{f_1^{(l)}(i\tau)}\Biggr\},
\]
where
\begin{equation}
f_1^{(l)}(\sqrt{z}):=\lim_{l^{(l)}_{N^{(l)}-1}\to0}\dots\lim_{l^{(l)}_2\to0}\lim_{l^{(l)}_1\to0}f_1(\sqrt{z}),
\label{last}
\end{equation}
where in the case $l=1$ no limits are taken in (\ref{last}).
\end{theorem}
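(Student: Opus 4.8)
The plan is to combine the two preceding theorems into an inductive reconstruction along the spanning tree $\mathbb{T}$. Theorem \ref{thm9.1} provides the full Robin-to-Dirichlet map for $\mathcal{V}^{({\rm e})}$ from the scattering data, so in particular the entry $f_1(\sqrt{z})$ is known. Theorem \ref{NtD_NtD} tells us that taking the limit $l_1\to 0$ of $f_1$ produces the $(1,1)$-entry $f_1^{(1)}$ of the Robin-to-Dirichlet map of the contracted graph, in which the edge emanating from $V_1$ has been collapsed and the coupling constant at the merged vertex becomes the sum of the two constants. First I would iterate this contraction procedure along the path $\gamma_l$ from the root $V_1$ to $V_m$: each successive limit $l^{(l)}_k\to 0$ in (\ref{last}) collapses the next edge of the path, so after all $N^{(l)}-1$ limits the function $f_1^{(l)}(\sqrt{z})$ is the $(1,1)$-entry of the Robin-to-Dirichlet map of a graph in which the entire path $\gamma_l$ has been contracted to the single root vertex, carrying the aggregated coupling constant $\sum_{m:\,V_m\in\gamma_l} a_m$.

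The second ingredient is the large-argument asymptotics that isolates this aggregated coupling constant. After full contraction of $\gamma_l$, the relevant diagonal Weyl-function entry is, by (\ref{Eq_Weyl_Func_Delta}), a sum over the edges incident to the contracted vertex of terms $-\sqrt{z}\cot(\sqrt{z}\,l_p)$ (and loop terms $2\sqrt{z}\tan(\sqrt{z}\,l_p/2)$), minus the coupling constant $\sum a_m$. I would evaluate $f_1^{(l)}$ along the imaginary axis $z={\rm i}\tau$, $\tau\to+\infty$. Since $\sqrt{{\rm i}\tau}$ has positive imaginary part, $\cot(\sqrt{{\rm i}\tau}\,l_p)\to -{\rm i}$ and the analogous tangent terms converge, so each incident-edge term behaves like $\sqrt{{\rm i}\tau}\cdot(\pm{\rm i})$, i.e. grows linearly in $\sqrt{|{\rm i}\tau|}$ in magnitude but in a way whose leading contribution is captured by the combinatorial count $\sum_{V_m\in\gamma_l}{\rm deg}(V_m)-2(N^{(l)}-1)$, the number of edge-ends remaining at the contracted vertex. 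Since $f_1^{(l)}$ is the reciprocal of this diagonal entry of $\widetilde{M}^{({\rm i})}-\widetilde\varkappa$ (after full contraction the matrix reduces to a scalar in the relevant slot, or one inverts a $1\times 1$ block), the quantity $-1/f_1^{(l)}({\rm i}\tau)$ recovers precisely the diagonal entry, and subtracting the explicitly-known $\tau$-proportional edge contribution leaves exactly $\sum_{m:\,V_m\in\gamma_l} a_m$ in the limit.

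Once these path-sums $\sum_{m:\,V_m\in\gamma_l} a_m$ are known for every $l=1,\dots,N$, the individual coupling constants are recovered by a triangular (telescoping) linear system: because the paths $\gamma_l$ are nested along the rooted tree and are ordered so that $N^{(l)}$ is non-decreasing, the sum over $\gamma_l$ differs from the sum over its parent path by the single new constant $a_l$. I would therefore solve for the $a_m$ by successive subtraction, which is well-defined precisely because $\mathbb{T}$ is a spanning tree of the connected graph ${\mathbb G}$ and every vertex lies on a unique root-path.

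The main obstacle I anticipate is the rigorous justification of the iterated limits in (\ref{last}) together with the asymptotic extraction. The rational independence of the edge lengths is essential here: it guarantees, as invoked in the proof of Theorem \ref{NtD_NtD}, that $f_1$ is a genuine function of $\sqrt{z}$ \emph{and} each $l_j$ separately, so that the limits $l^{(l)}_k\to 0$ can be taken one at a time and commute with the analytic structure; without it the determinant ratios could develop spurious cancellations or coincident poles. Verifying that the cotangent/tangent singularities at each vanishing edge-length cancel correctly at every stage of the contraction—mirroring the single-edge computation in Theorem \ref{NtD_NtD} via the identity $-\cot(\sqrt{z}l)\cos(\sqrt{z}l)+1/\sin(\sqrt{z}l)=\sin(\sqrt{z}l)$—and that the $\tau\to+\infty$ limit genuinely separates the linear-in-$\tau$ edge term from the constant coupling term, is where the careful work lies; the remaining combinatorics and the telescoping linear solve are routine once these analytic facts are in place.
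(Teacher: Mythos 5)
Your proposal follows essentially the same route as the paper, which indeed presents this theorem as a corollary of Theorems \ref{thm9.1} and \ref{NtD_NtD}: recover $f_1$ from the scattering matrix, iterate the edge-contraction limit along each root-path $\gamma_l$ so that $f_1^{(l)}$ becomes the $(1,1)$-entry of the Robin-to-Dirichlet map of the fully contracted graph with aggregated coupling constant $\sum_{m:V_m\in\gamma_l}a_m$ and vertex degree $\sum_{V_m\in\gamma_l}{\rm deg}(V_m)-2(N^{(l)}-1)$, extract that constant from the large-parameter asymptotics of the Weyl function, and then telescope along the nested paths. The one slip is the evaluation point: in (\ref{last}) the argument of $f_1^{(l)}$ is $\sqrt{z}$, so $f_1^{(l)}({\rm i}\tau)$ means $\sqrt{z}={\rm i}\tau$, i.e.\ $z=-\tau^2\to-\infty$; then $-\sqrt{z}\cot(\sqrt{z}l_p)=-\tau\coth(\tau l_p)\to-\tau$ and the off-diagonal entries decay exponentially, so $-1/f_1^{(l)}({\rm i}\tau)=\tau\,{\rm deg}+\sum a_m+o(1)$ and the linear term cancels exactly. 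With your reading $z={\rm i}\tau$ the diagonal entries grow only like $e^{3{\rm i}\pi/4}\sqrt{\tau}$ and the subtraction of $\tau$ times an integer would not yield a finite limit, so that step needs the corrected substitution.
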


\section{Zero-range potentials with an internal structure}
\label{sec:models-potent-zero}

\subsection{Zero-range models}

In many models of mathematical physics, most notably in the analysis of Schrodinger operators, an explicit solution can be obtained in a very limited number of special cases (essentially, those that admit separation of variables and thus yield solutions in terms of special functions). This deficit of explicitly solvable models has led physicists, starting with E.~Fermi in 1934, to the idea to replace potentials with some boundary condition at a point of three-dimensional space, i.e., a zero-range potential.

The rigorous mathematical treatment of this idea was initiated in \cite{Berezin_Faddeev}. It was shown that the corresponding model Hamiltonians are in fact self-adjoint extensions of a Laplacian which has been restricted to the set of $W^{2,2}$ functions vanishing in a vicinity of a fixed point $x_0$ in $\mathbb{R}^3$. These ideas were further developed in a vast series of papers and books, culminating in the monograph \cite{kura}, which also contains an comprehensive list of references.

Physical applications of zero-range models have been treated in, e.g., \cite{demkov}. It has been conjectured that zero-range models provide a good approximation of realistic physical systems in at least a far-away zone, where the concrete shape of the potential might be discarded, making them especially useful in the analysis of scattering problems. Here we also mention the celebrated Kronig-Penney model where a periodic array of zero-range potentials is used to model the atoms in a crystal lattice.

Despite the obvious success of the idea explained above, it still carries a number of serious drawbacks. In particular, it can be successfully applied to model spherically symmetric scatterers only. If one attempts to model a scatterer of a more involved structure, i.e., possessing a richer spectrum, by a finite set of zero-range potentials, the complexity of the model grows rapidly, essentially eliminating the main selling point of the model, i.e., its explicit solvability.

In 1980s, based in part on earlier physics papers by Shirokov et. al. where the idea was presented in an implicit form,
B.~S.~Pavlov \cite{Pavlov_internal_structure} rigorously introduced a model of zero-range potential with an internal structure. This idea was further developed by Pavlov and his students, see e.g. \cite{Pavlov1988,AdamyanPavlov} and references therein. In the mentioned works of Pavlov, one starts by considering the operator $A_0$ being a Laplacian restricted to the set of $W^{2,2}$ functions vanishing in a vicinity of a fixed point in $\mathbb{R}^3$, precisely as in \cite{Berezin_Faddeev}. Then, instead of considering von Neumann self-adjoint extensions of the latter, one passes over to the consideration of the so-called \emph{out of space} extensions, i.e., extensions to self-adjoint operators in a larger Hilbert space. The theory of out-of-space extensions generalising that of J.~von~Neumann was constructed by M.~A.~Neumark in \cite{Neumark1, Naimark1940} in the case of densely defined symmetric operators and by M.~A.~Krasnoselskii \cite{Krasnoselskii} and A.~V.~Strauss \cite{Strauss_ext} in the case opposite, see also \cite{Strauss_survey} for the connections with the theory of functional models.

In fact, Pavlov, being quite possibly unaware of these theoretical developments, has reinvented this technique in the following way. Alongside the original Hilbert space $H=L^2(\mathbb{R}^3)$, consider an auxiliary \emph{internal} Hilbert space $E$ (which can be in many important cases considered to be finite-dimensional) and a self-adjoint operator $A$ with simple spectrum in it. Let $\phi$ be its generating vector and consider the restriction $A_\phi$ of $A$ (non-densely defined)  to the space
$$
\dom A_\phi := \{(A-{\rm i})^{-1}\psi: \psi\in E, \langle \phi,\psi\rangle =0\}.
$$
This leads to the symmetric operator $\mathcal{A}_0$ on the Hilbert space $H\oplus E$, defined as $A_0\oplus A_\phi$ on the domain
$$
\dom \mathcal{A}_0:= \left\{ \binom{f}{v}: f\in \dom A_0, v\in \dom A_\phi   \right\},
$$
where $A_0$ is the restricted Laplacian on $H$ introduced above. The operator $\mathcal{A}_0$ is then a symmetric non-densely defined operator with equal deficiency indices, and one can consider its self-adjoint extensions $\mathcal{A}$. Among them, we will single out those which non-trivially couple the spaces $H$ and $E$ by feeding the boundary data at $x_0$ of a function $f\in W^{2,2}(\mathbb{R}^3)$ to the ``part'' of operator acting in $E$. The latter then serves as the operator of the ``internal structure'', which can be chosen arbitrarily complex. We elect not to dwell on the precise way in which the extensions $\mathcal{A}$ are constructed as an explicit examples of operators of this class will be presented below in the present section.

\subsection{Connections with inhomogeneous media}
\label{connections_inhom}

Leaving the subject of zero-range models with an internal structure aside for a moment, let us briefly consider a number of physically motivated models giving rise  to zero-range potentials in general. In particular, we will be interested in those models which lead to a distribution ``potential'' $\delta'$, where $\delta$ is the Dirac delta function. It is well-known, see, e.g., \cite{kura}, that the question of relating an operator of the form $-\Delta + \alpha\delta'$ to one of self-adjoint von Neumann extensions of a properly selected symmetric restriction $A_0$ of $-\Delta$ is far from being trivial, as $\delta'$, unlike $\delta$, is form-unbounded.

For the same reason, it is non-trivial to construct an explicit ``regularisation'' of a $\delta'-$perturbed Laplacian, i.e., a sequence of operators $A_\e$ being either potential perturbations of the Laplacian or, in general, any perturbations of the latter which converge in some sense (say, in the sense of resolvent convergence) to the Laplacian with a $\delta'$ perturbation. In particular, we point out among many others the paper \cite{Carreau} where $A_\e$ are chosen as first-order differential non-self-adjoint perturbations of the Laplacian of a special form and the paper \cite{Golovaty} where the perturbation is assumed to admit the form $\e^{-2} v(x/\e)$.

It turns out that additive $\e-$dependant  perturbations are not the most straightforward choice for the task described, as zero-range perturbations (and more precisely, zero-range perturbations with an internal structure) appear naturally in the asymptotic analysis of inhomogeneous media. In particular, in the paper \cite{KCher} we studied the norm-resolvent asymptotics of differential operators $A_\e$ with periodic coefficients with high contrast, defined by their resolvents $(A_\e-z)^{-1}:f\mapsto u$ as follows:
\begin{equation}
-\bigl(a^\varepsilon\bigl(x\bigr)u'\bigr)'-zu=f,\ \ \ \ f\in L^2({\mathbb R}),\ \ \ \varepsilon>0,\ \ \ z\in{\mathbb C},
\label{orig_problem}
\end{equation}
where, for all $\varepsilon>0,$ the coefficient $a^\varepsilon$ is $1$-periodic and
\begin{equation*}
a^\varepsilon(y):=\left\{\begin{array}{lll}a \e^{-2},\ \ \ y\in[0, l_1),\\[0.3em] 1,\ \ \ y\in[l_1, l_1+l_2),\\[0.3em] a \e^{-2},\ \ \ y\in[l_1+l_2, 1),\end{array}\right.
\end{equation*}
with $a>0,$ and $0<l_1<l_1+l_2<1.$ Here in \eqref{orig_problem} the ``natural'' matching conditions are imposed at the points of discontinuity of the symbol $a^\e(x)$, i.e., the continuity of both the function itself and of its conormal derivative, so that the operators $A_\e$ can be thought as being defined by the form $\int a^\e(x)|u'(x)|^2dx$. We remark that the operators $A_\e$ are unitary equivalent to the operators of the double-porosity model of homogenisation theory in dimension one, see, e.g., \cite{HempelLienau_2000}.

The main result of the named paper can be reformulated as follows.

\begin{theorem}\label{kc}
The norm-resolvent limit of the sequence $A_\e$  is unitarily equivalent to the operator $A_{\rm hom}$
 in $L^2({\mathbb R})$ given by the differential expression
$-l_2^{-2}d^2/{dx^2}$ on
\begin{multline*}
{\rm dom}({ A}_{\rm hom})\\[0.25em]
=\bigl\{U: \forall n\in\mathbb Z\ \ U\in W^{2,2}(n, n+1),\ \ U'\in C(\mathbb R),\ \forall n\in\mathbb Z\ \ U(n+0)-U(n-0)=
l_2^{-1}(l_1+l_3)U'(n)\bigr\},
\end{multline*}
where $l_3:=1-(l_1+l_2)$.
Moreover, for all $z$ in a compact set $K_\sigma$ such that the distance of the latter from the positive real line is not less than a fixed $\sigma>0$, this norm resolvent convergence is uniform, with the (uniform) error bound $O(\e^2)$.
\end{theorem}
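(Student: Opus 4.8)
The plan is to reduce the periodic problem \eqref{orig_problem} to a one-parameter family of fibre problems by the Gelfand (Floquet--Bloch) transform and then to carry out the asymptotics fibrewise, using the boundary-triple/Krein-formula apparatus of Section \ref{sec:funct-model-family}. First I would write $A_\e=\int_{[0,2\pi)}^{\oplus}A_\e(\theta)\,d\theta$, where $A_\e(\theta)$ acts in $L^2(0,1)$ on functions satisfying $\theta$-quasiperiodic conditions, with the natural matching (continuity of $u$ and of the conormal flux $a^\e u'$) at the interior discontinuity points $l_1$ and $l_1+l_2$. Within each fibre I would treat the soft interval $[l_1,l_1+l_2]$ as carrying the ``main'' operator $-d^2/dx^2$ and handle the stiff intervals, where the coefficient is $a\e^{-2}$, through their effective boundary response. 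Introducing a boundary triple whose boundary space records the traces of $u$ at the two soft/stiff interfaces, the fibre resolvent $(A_\e(\theta)-z)^{-1}$ is delivered by a Krein-type formula as in Theorem \ref{thm:BVP_def}, with the role of the Weyl function played by an $M$-matrix $M_\e(\theta,z)$ assembled from the transfer (monodromy) data across the soft and stiff pieces.

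The technical heart is the expansion of $M_\e(\theta,z)$ as $\e\to0$. On a stiff interval of length $L$ the homogeneous equation $-(a\e^{-2}v')'=zv$ has solutions trigonometric in $\sqrt{z}\,\e/\sqrt a$, so that, writing the flux $p=a\e^{-2}v'$, one gets $v(L)-v(0)=O(\e^2)$ while $p(L)-p(0)=-z\,v(0)\,L+O(\e^2)$. Thus at leading order $u$ is constant on each stiff component, its common value being a single scalar, the $\mathbb C^1$ internal degree of freedom anticipated in the Introduction, whereas the flux acquires an $O(1)$ increment proportional to $z$ and to the stiff length $l_1+l_3$. Feeding these two relations into the Krein formula, I expect $M_\e(\theta,z)$ to split into a rigid part of size $O(\e^{-2})$, enforcing the ``constant on the stiff part'' constraint in the limit, and a finite resonant part coupling the soft field to the internal scalar. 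Carrying the expansion one order further and invoking \eqref{eq:genres} together with the Neumark--Strauss picture of Section \ref{gen_res1}, the fibre resolvents should converge in operator norm to the resolvent of an out-of-space self-adjoint extension on $L^2(\text{soft})\oplus\mathbb C$, uniformly in $\theta$, with remainder $O(\e^2)$.

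It then remains to identify this limiting fibre object with the Gelfand fibre of $A_{\rm hom}$ and to reassemble. Rescaling each soft interval of length $l_2$ to unit length produces the prefactor in $-l_2^{-2}d^2/dx^2$, and eliminating the internal scalar from the limiting coupled system is what I expect to turn the $z$-dependent increment of the flux into the energy-independent $\delta'$-type matching $U(n+0)-U(n-0)=l_2^{-1}(l_1+l_3)U'(n)$ with $U'\in C(\mathbb R)$; the constant $l_2^{-1}(l_1+l_3)$ is precisely the ratio (stiff length)/(soft length) read off from the leading coupling coefficient above, and the passage between the two forms of the point condition is implemented by the fixed unitary asserted in the theorem. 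Since the fibrewise norm estimates are uniform in $\theta$, the direct-integral recomposition upgrades them to norm-resolvent convergence of $A_\e$ itself.

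I anticipate the main obstacle to be exactly this uniformity. The error must be controlled uniformly in the quasimomentum $\theta$ over the whole Brillouin zone and, simultaneously, uniformly for $z$ in the compact set $K_\sigma$; the difficulty concentrates near band edges and near the values of $\theta$ at which the unperturbed fibre problem is resonant, where the operator inverted in the Krein formula threatens to lose invertibility. The separation $\sigma>0$ of $K_\sigma$ from the positive real axis is what should keep that inverse bounded uniformly and is, I expect, indispensable for the clean $O(\e^2)$ rate; quantifying the remainder in the $M_\e(\theta,z)$ expansion uniformly in $(\theta,z)$, rather than merely pointwise, is the step that will demand the most care.
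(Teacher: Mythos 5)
Your proposal follows essentially the same route as the paper: Gelfand transform to the fibre family $A_\e^{(\tau)}$, fibrewise norm-resolvent asymptotics (via the Krein formula and the $O(\e^2)$ transfer-matrix expansion on the stiff component, whose leading-order constancy supplies the $\mathbb C^1$ internal degree of freedom) yielding the zero-range model with internal structure $\mathcal A_{\hom}^{(\tau)}$ on $L^2(0,l_2)\oplus\mathbb C$ uniformly in $\tau$ and $z\in K_\sigma$, followed by the explicit unitary equivalence to the $\delta'$-type fibre operator and the inverse Gelfand transform. This matches the argument of Section 4.2 and the generalised-resolvent strategy of Section 4.4, including your correct identification of the uniformity in $(\tau,z)$ as the delicate point.
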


By inspection, the operator $A_{\rm hom}$ defined above corresponds to the formal differential expression
$$
-l_2^{-2}\frac{d^2}{dx^2} + \frac{(l_1+l_3)} {l_2}\sum_{n\in \mathbb{Z}} \delta'(x-n),
$$
i.e., it is the operator of the Kronig-Penney dipole-type model on the real line. It is also quite clear that the periodicity of the model considered has nothing to do with the fact that the effective operator acquires the $\delta'$-type potential perturbation.  Thus it leads to the understanding that strong inhomogeneities in the media in generic (i.e., not necessarily periodic) case naturally give rise to zero-range potentials of $\delta'$-type.

In order to relate this exposition to our subject of zero-range potentials with an internal structure, let us describe the main ingredient leading to the result formulated above. As usual in dealing with periodic problems, we apply the Gelfand transform
\begin{equation}
\hat{U}(y,\tau)=(2\pi)^{-d/2}\sum_{n\in{\mathbb Z}^d}U(y+n)\exp\bigl(-{\rm i}\tau\cdot(y+n)\big),\ \ \ y\in
[0,1],
\ \tau\in [-\pi,\pi),
\label{Gelfand_formula}
\end{equation}
to the operator family $A_\e$, which yields the operator family $A_\e^{(\tau)}$ corresponding to the differential expression
\[
-\biggl(\frac d{dx}+{\rm i}\tau\biggr){a^\e(x)}\biggl(\frac d{dx}+{\rm i}\tau\biggr)
\]
on the interval $[0,1]$ with periodic boundary conditions at the endpoints. Here $\tau\in[-\pi, \pi)$ is the quasimomentum. As above, the matching conditions at the points of discontinuity of the symbol $a^\e(x)$ are assumed to be natural.

The asymptotic analysis of the operator family $A_\e^{(\tau)}$, as shown in \cite{KCher,CEKN}, yields the following operator as its norm-resolvent asymptotics.
Let $H_\hom=H_\soft\oplus \mathbb C^1$. For all values $\tau\in[-\pi, \pi),$ consider a self-adjoint operator $\mathcal A_{\rm hom}^{(\tau)}$ on the space $H_\hom,$ defined as follows. Let the domain $\dom \mathcal A_{\hom}^{(\tau)}$ be defined as
\begin{equation*}
\dom \mathcal A_{\hom}^{(\tau)}=\Bigl\{(u,\beta)^\top\in H_\hom:\ u\in W^{2,2}(0,l_2), u(0)=\overline{\xi^{(\tau)}} u(l_2)=\beta/\sqrt{l_1+l_3}\Bigr\}.
\end{equation*}

On $\dom \mathcal A_{\hom}^{(\tau)}$ the action of the operator is set by
$$
\mathcal A_{\hom}^{(\tau)}\binom{u}{\beta}=
\left(\begin{array}{c}\biggl(\dfrac{1}{\rm i}\dfrac{d}{dx}+\tau\biggr)^2\\[0.7em]
-\dfrac{1}{\sqrt{l_1+l_3}}
\bigl(\partial^{(\tau)} u\bigr|_0 - \overline{\xi^{(\tau)}}\partial^{(\tau)} u\bigr|_{l_2}\bigr)
\end{array}\right).
$$
Here
\[
\xi^{(\tau)}:=\exp\bigl({\rm i}(l_1+l_3) \tau\bigr),\qquad
\partial^{(\tau)} u:=\biggl(\frac d {dx}+{\rm i}\tau\biggr) u.
\]

\begin{theorem}\label{thm:ex0}
The resolvent $(A_\e^{(\tau)}-z)^{-1}$ admits the following estimate in the uniform operator norm topology:
$$
(A_\e^{(\tau)}-z)^{-1}-\Psi^* (\mathcal A_{\hom}^{(\tau)}-z)^{-1}\Psi=O(\e^2),
$$
where $\Psi$ is a partial isometry from $H=L^2(0,1)$ to $H_\hom$.
This estimate is uniform in $\tau\in[-\pi,\pi)$ and $z\in K_\sigma$.
\end{theorem}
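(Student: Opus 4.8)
The plan is to realise $A_\e^{(\tau)}$ as an extension of a symmetric operator obtained by decoupling the soft and stiff constituents at the interfaces $x=l_1$ and $x=l_1+l_2$, and to read off the norm-resolvent limit from Krein's formula together with a two-scale analysis of the associated Dirichlet-to-Neumann (Weyl) map, in the spirit of the generalised-resolvent approach of Section~\ref{gen_res1}. Because of the periodic identification $0\sim 1$, the two stiff intervals $[0,l_1)$ and $[l_1+l_2,1)$ fuse into a single stiff segment of length $l_1+l_3$, coupled to the soft segment $(l_1,l_1+l_2)\cong(0,l_2)$ at the two interface points. Taking the (twisted) interface traces as boundary data in $\mathbb C^2$, the coupled resolvent is governed by $(M_\soft(z)+M_\stiff^\e(z))^{-1}$, where $M_\soft(z)$ is the $\e$-independent soft interface map and $M_\stiff^\e(z)$ is that of the stiff segment; the soft Dirichlet resolvent enters unchanged, and everything hinges on the behaviour of $M_\stiff^\e$.

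On the stiff segment the homogeneous equation is $-(\partial^{(\tau)})^2 w=(z\e^2/a)w$, so the relevant wavenumber is $k=\e\sqrt{z/a}$, small for $z\in K_\sigma$. Writing the standard $2\times2$ interface map in the twisted variable and reinstating the large prefactor $a\e^{-2}$, one finds a singular two-scale structure: to leading order $M_\stiff^\e(z)\sim \bigl(a/(\e^2(l_1+l_3))\bigr)J$, where $J$ is the rank-one ``Laplacian'' $\left(\begin{smallmatrix}1&-1\\-1&1\end{smallmatrix}\right)$ (in twisted coordinates). This divergent term is not an obstruction but a constraint: it forces the two twisted traces to coincide, which after restoring the accumulated twist phase $\xi^{(\tau)}=\exp({\rm i}(l_1+l_3)\tau)$ is exactly the matching $u(0)=\overline{\xi^{(\tau)}}u(l_2)$ and collapses the stiff segment to a single coordinate $\beta\in\mathbb C^1$, with the normalisation $\beta=\sqrt{l_1+l_3}\,u(0)$ dictated by the $L^2$-weight of the twisted constant on a segment of length $l_1+l_3$. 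Evaluating $M_\stiff^\e(z)$ at next order on the surviving $(1,1)$-direction gives a finite, $z$-linear contribution: this is the effective ``mass'' of the internal structure and reproduces the $\mathbb C^1$-action $-\frac{1}{\sqrt{l_1+l_3}}(\partial^{(\tau)}u|_0-\overline{\xi^{(\tau)}}\partial^{(\tau)}u|_{l_2})$ of $\mathcal A_\hom^{(\tau)}$.

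To assemble the limit I would perform a Schur-complement (block) inversion of $M_\soft(z)+M_\stiff^\e(z)$ along the near-kernel of $J$: on the locked subspace the divergent term drops out, leaving $M_\soft(z)$ plus the finite $z$-linear stiff correction, which is precisely the reduced resolvent equation for $\mathcal A_\hom^{(\tau)}$ on $H_\soft\oplus\mathbb C^1$. The partial isometry $\Psi:L^2(0,1)\to H_\hom$ is the natural one, acting (after a shift) as the identity on the soft part and projecting the stiff part onto its twisted-constant component $\beta$; with this identification the Krein formula for $(A_\e^{(\tau)}-z)^{-1}$ matches $\Psi^*(\mathcal A_\hom^{(\tau)}-z)^{-1}\Psi$ up to the higher-order remainders. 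Since the leading stiff term enters at order $\e^{-2}$ and the first correction at order $\e^0$, the genuine error sits at order $\e^2$, giving the claimed bound.

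The remaining work is quantitative and uniform. One must establish that $M_\soft(z)$ and the inverse of the reduced (Schur-complement) symbol are uniformly bounded on $K_\sigma$, using the distance $\sigma$ from $\mathbb R_+$ to keep $z$ off the spectra of both $A_\e^{(\tau)}$ and $\mathcal A_\hom^{(\tau)}$, and that all remainder terms in the expansion of $M_\stiff^\e$ are $O(\e^2)$ \emph{uniformly} in $\tau\in[-\pi,\pi)$; the latter follows from the analytic and bounded dependence of $k$, of $\xi^{(\tau)}$ and of the interface data on the compact quasimomentum variable. I expect the main obstacle to be exactly this singular inversion of $M_\soft(z)+M_\stiff^\e(z)$: because $M_\stiff^\e$ is simultaneously unbounded (order $\e^{-2}$) and rank-deficient at leading order, controlling $(M_\soft+M_\stiff^\e)^{-1}$ and extracting its finite limit with an honest $O(\e^2)$ remainder — rather than a mere $o(1)$ — requires the next-order stiff term to be computed and estimated with care, uniformly across all $\tau$ and $z\in K_\sigma$.
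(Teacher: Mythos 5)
Your proposal follows essentially the same route as the paper's own (which is carried out in the cited works \cite{KCher,CEKN} and summarised in Section \ref{gen_res2}): decouple at the interface, reduce to the Krein formula for the sum of the soft and stiff $M$-matrices, identify the $O(a\e^{-2})$ rank-deficient leading term of the stiff Dirichlet-to-Neumann map as the constraint collapsing the stiff component to a twisted constant, and read off the internal-structure term from the finite $z$-linear correction, with the limit recognised as the Neumark--Strauss dilation of the resulting generalised resolvent. You also correctly identify the genuine crux --- that the singular, rank-deficient inversion must be controlled quantitatively and uniformly in $\tau$ and $z\in K_\sigma$, since the abstract generalised-resolvent argument alone yields only weak convergence --- which is exactly where the cited papers invest their effort.
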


It is clear now that the operator $\mathcal A_{\hom}^{(\tau)}$ is nothing but the simplest possible example of Pavlov's zero-range perturbations with an internal structure, corresponding to the case where the dimension of the internal space $E$ is equal to one. The definition of $\mathcal A_{\hom}^{(\tau)}$ implies that the support of the zero-range potential here is located at the point $x_0=0$, which is identified due to quasi-periodic (of Datta--Das Sarma type) boundary conditions  with the point $x=l_2$.

Next it is shown (see \cite{KCher} for details) that under an explicit unitary transform the operator family $\mathcal A_{\hom}^{(\tau)}$    is unitary equivalent to the family $\mathcal A'_{\rm hom}(\tau')$ at the quasimomentum point $\tau'=\tau+\pi ({\rm mod\ } 2\pi)$. Here $\mathcal A'_{\rm hom}(\tau')$ acts in the space $L^2[0,l_2]$
and is defined by the same differential expression as $\mathcal A_{\rm hom}^{(\tau)},$ with the parameter $\tau$ replaced by
$\tau':$
\[
\biggl(\frac{1}{\rm i}\frac {d}{dx}+\tau'\biggr)^2,
\]
on the domain described by the conditions
\begin{eqnarray*}
u(0)+{\rm e}^{-{\rm i}(l_1+l_3)\tau'} u(l_2)=(l_1+l_3) {\partial^{(\tau')}} u\bigr|_0,
\\[0.4em]
{\partial^{(\tau')}} u\bigl|_0=-{\rm e}^{-{\rm i}(l_1+l_3)\tau'}{\partial^{(\tau')}} u\bigr|_{l_2}.
\end{eqnarray*}

An application of the inverse Gelfand transform then yields Theorem \ref{kc}. This shows that the operator $\mathcal A_{\hom}^{(\tau)}$ which is an operator of a zero-range model with the internal space $E$ of dimension one is in fact a differential operator with a $\delta'-$potential, up to a unitary transformation. In view of \cite{Shkalikov_1983,Kurasov_supersingular} it is plausible that by a similar argument it could be shown, that an operator with a $\delta^{(n)}-$potential can be realized as a zero-range model with $\dim E= n$, for any natural $n$.

It is interesting to note that an operator admitting the same form as  $\mathcal A_{\hom}^{(\tau)}$ (with $\tau=0$)   appears naturally
in the setting of
\cite{Exner, KuchmentZeng, KuchmentZeng2004},
who discuss the behaviour of the spectra of operator sequences associated with domains
``shrinking'' as $\e\to 0$ to a metric graph embedded into $\mathbb{R}^d$.
Here the rate of shrinking of the ``edge'' parts
is assumed to be related to the rate of shrinking of the ``vertex'' parts of the domain via
\begin{equation}
\frac{\text{vol}(V^\varepsilon_{\rm vertex})}{\text{vol}(V^\varepsilon_{\rm edge})}\to\alpha>0, \ \ \ \varepsilon\to0.
\label{shrinking_rate}
\end{equation}
It is shown in the above works
that the spectra of the corresponding Laplacian operators with Neumann boundary conditions
converge to the spectrum of a
quantum graph associated with a Laplacian on the metric graph obtained as the limit of the domain as $\varepsilon\to0.$
The ``weight'' $l_1+l_3$ in $\mathcal A_{\hom}^{(\tau)}$
plays the r\^{o}le of the constant $\alpha$ in (\ref{shrinking_rate}).

By a similar argument to the one presented above one can show, that in the case of domains shrinking to a graph under the ``resonant'' condition (\ref{shrinking_rate}) one obtains, under a suitable unitary transform, the matching condition of $\delta'$ type at the internal graph vertices, with the corresponding coupling constant equal to $\alpha$.

\subsection{A PDE model: BVPs with a large coupling}

\label{PDE_model_bounded}

\subsubsection{Problem setup}

In \cite{CherKisSilva3}, we studied a prototype large-coupling transmission problem, posed on a bounded domain $\Omega\subset{\mathbb R}^d,$ $d=2,3,$ see Fig.\,\ref{fig:kaplya}, containing a ``low-index" (equivalently, ``high propagation speed") inclusion $\Omega_-,$ located at a positive distance to the boundary $\partial\Omega.$ Mathematically, this is modelled by a ``weighted" Laplacian $-a_{\pm}\Delta$, where $a_+=1$ (the weight on the domain
$\Omega_+:=\Omega\setminus\overline{\Omega}_-$), and $a_-\equiv a$ (the weight on the domain $\Omega_-$) is assumed to be large, $a_-\gg1.$ This is supplemented by the Neumann boundary condition
$\partial u/\partial n=0$ on the outer boundary $\partial\Omega,$
where $n$ is the exterior normal to $\partial\Omega,$ and ``natural" continuity conditions on the ``interface" $\Gamma:=\partial\Omega_-.$ For each $a,$ we consider time-harmonic vibrations
of the physical domain
represented by $\Omega,$ described by the eigenvalue problem for an appropriate operator in $L^2(\Omega).$
\begin{figure}[h!]
	\begin{center}
		\includegraphics[scale=0.45]{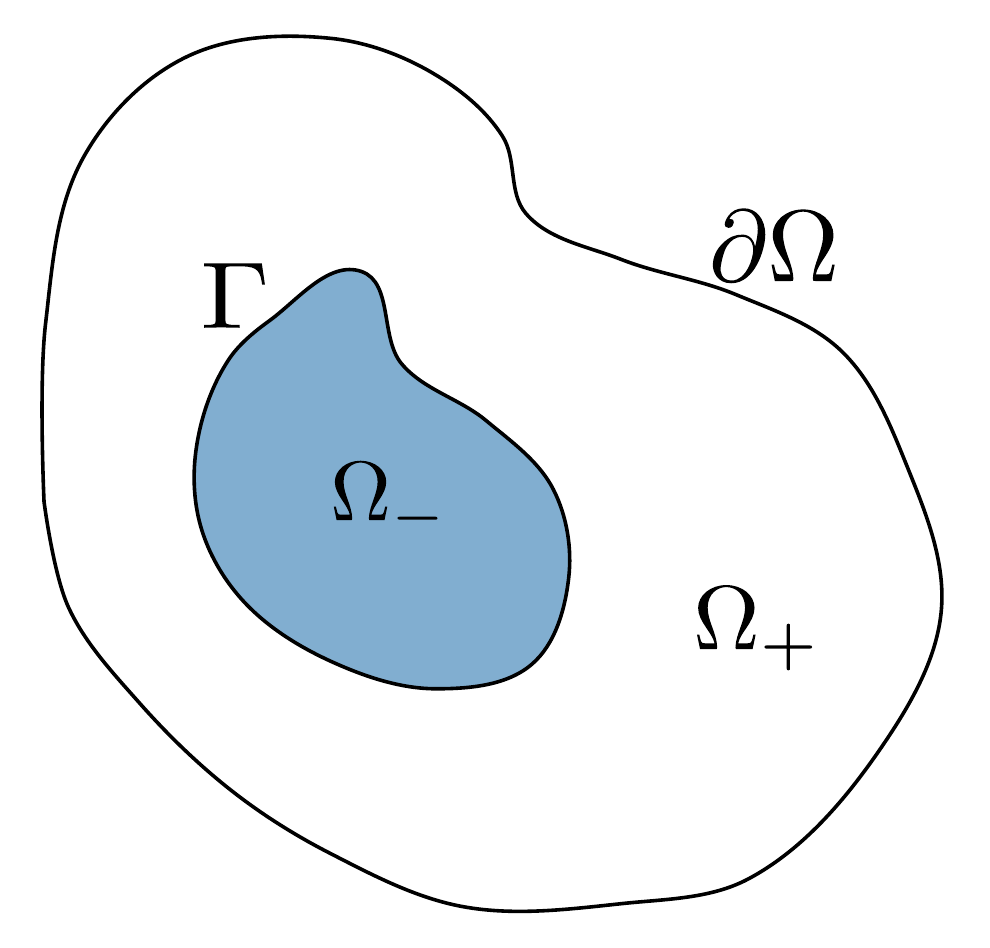}
	\end{center}
	\caption{Domain with a ``stiff" inclusion.\label{fig:kaplya}}
\end{figure}
It is easily seen that eigenfunction sequences for these eigenvalue problems converge, as $a\to\infty,$ to either a constant or a function of the form
\[
v-\frac{1}{|\Omega|}\int_{\Omega_+}v,
\]
where
$v$ satisfies the spectral boundary-value problem (BVP)
\begin{equation}
	\label{electrostatic}
	-\Delta v=z\biggl(v-\frac{1}{|\Omega|}\int_{\Omega_+}v\biggr)\ \ \ {\rm in}\ \Omega_+,\ \quad v\vert_\Gamma=0,
	\qquad \dfrac{\partial v}{\partial n}\biggr\vert_{\partial\Omega}=0.
\end{equation}
Here the spectral parameter $z$ represents
the ratio of the size of the original physical domain to the wavelength in its part
represented by $\Omega_+.$

The problem (\ref{electrostatic})
is isospectral to the so-called ``electrostatic problem'' discussed in \cite[Lemma 3.4]{Zhikov_2004}, see also \cite{AKKL} and references therein, namely the eigenvalue problem for the self-adjoint operator $Q$ defined by the quadratic form
\begin{equation*}
	q(u, u)=\int_{\Omega_+}\nabla v\cdot\overline{\nabla v},\qquad u=v+c,\quad v\in H^1_{0,\Gamma}:=\bigl\{v\in H^1(\Omega_+),\ v\vert_\Gamma=0\bigr\},\quad c\in{\mathbb C}
\end{equation*}
on the Hilbert space $L^2(\Omega_+)\dotplus \mathbb{C}$, treated as a subspace of $L^2(\Omega)$.

Denote by $A_0^+$ the Laplacian $-\Delta$ on $\Omega_+,$ subject to the Dirichlet condition on $\Gamma$ and the Neumann boundary condition
on $\partial\Omega$ and let
$\lambda^+_j,$ $\phi^+_j,$ $j=1,2,\dots,$ be the eigenvalues and the corresponding orthonormal eigenfunctions, respectively, of $A_0^+.$

It is then easily shown, that the spectrum of the electrostatic problem is the union of two sets: a) the set of $z$ solving the equation
\begin{equation*}
	z\Biggl[|\Omega|+z\sum_{j=1}^\infty(\lambda_j^+-z)^{-1}\biggl|\int_{\Omega_+}\phi^+_j\biggr|^2\Biggr]=0.
\end{equation*}
and b) the set of those eigenvalues $\lambda^+_j$
for which the corresponding eigenfunction $\phi^+_j$ has zero mean over $\Omega_+.$

\subsubsection{Norm-resolvent convergence to a zero-range model with an internal structure}
\label{large_coupling_lim}

Suppose that $\Omega$ is a bounded $C^{1,1}$ domain, and $\Gamma\subset\Omega$ is a closed $C^{1,1}$ curve, so that $\Gamma=\partial\Omega_-$ is the common boundary of domains $\Omega_+$ and $\Omega_-,$ where $\Omega_-$ is strictly contained in $\Omega,$ such that $\overline{\Omega}_+\cup\overline{\Omega}_-=\overline{\Omega},$ see Fig.\,\ref{fig:kaplya}.

For $a>0,$ $z\in{\mathbb C}$
we consider the ``transmission" eigenvalue problem ({\it cf.} \cite{Schechter})
\begin{equation}
\label{eq:transmissionBVP}
\begin{cases}
&-\Delta u_+=zu_+\ \ {\rm in\ } \Omega_+,\\[0.4em]
&-a\Delta u_-=zu_-\ \ {\rm in\ } \Omega_-,\\[0.4em]
&u_+=u_-,\quad \dfrac{\partial u_+}{\partial n_+}+a\dfrac{\partial u_-}{\partial n_-}=0\ \ {\rm on\ } \Gamma,\\[0.7em]
&
\dfrac{\partial u_+}{\partial n_+}=0\ \
{\rm \ on\ } \partial\Omega,
\end{cases}
\end{equation}
where $n_\pm$ denotes the exterior normal (defined a.e.) to the corresponding part of the boundary. The above problem is understood in the strong sense,
{i.e.} $u_\pm\in H^2(\Omega_\pm),$ the Laplacian differential expression $\Delta$ is the corresponding combination of second-order weak derivatives, and the boundary values of $u_\pm$ and their normal derivatives are understood in the sense of traces according to the embeddings of $H^2(\Omega_\pm)$ into $H^s(\Gamma),$ $H^s(\partial\Omega),$ where $s=3/2$ or $s=1/2.$

Denote by $A_a$ the operator of the above boundary value problem. Its precise definition is given on the basis of the boundary triples theory in the form of \cite{Ryzh_spec}.

Consider the space $H_{\rm eff}=L^2(\Omega_+)\oplus \mathbb C$ and the following linear subset of $L^2(\Omega):$
\begin{equation*}
	\dom \mathcal A_{\rm eff}=\biggl\{
	\binom{u_+}{\eta}\in H_{\rm eff}:\ u_+\in H^2(\Omega_+),\ \ u_+|_\Gamma=\frac{\eta}{\sqrt{|\Omega_-|}}{\mathbbm 1}_\Gamma, \ \ \dfrac{\partial u_+}{\partial n_+}\biggr\vert_{\partial\Omega}=0\biggr\},
\end{equation*}
where $u|_\Gamma$
is the trace of the function $u$  and ${\mathbbm 1}_\Gamma$ is the unity function on $\Gamma.$
On $\dom \mathcal A_{\rm eff}$ we set the action of the operator ${\mathcal A}_{\rm eff}$ by the formula
\vskip -0.3cm
\begin{equation}
	\mathcal A_{\rm eff}\binom{u_+}{\eta}=
	\left(\begin{array}{c}-\Delta u_+\\[0.6em]
		\dfrac{1}{\sqrt{|\Omega_-|}}\mathop{{{\int}}}_{\Gamma}\dfrac{\partial u_+}{\partial n_+}
	\end{array}\right).
\label{A_eff_form}
\end{equation}



\begin{theorem}
	\label{mmm}
	The  operator $\mathcal A_{\rm eff}$ is the norm-resolvent limit of the operator family $A_a$. This convergence is uniform for $z\in K_\sigma$, with an error estimate by $O(a^{-1})$.
\end{theorem}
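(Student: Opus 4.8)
The plan is to realise both operators within the boundary-triples/BVP framework of Section~\ref{sec:funct-model-family} (Theorem~\ref{thm:BVP_def}) on the interface $\Gamma$ and to compare the resulting Krein-type resolvent formulae. First I would take the trace space on $\Gamma$ as the boundary space and set up, separately on the two subdomains, the solution (Poisson) operators together with the Weyl functions: on $\Omega_+$ the Dirichlet-to-Neumann map $M_+(z)$ of $-\Delta$ (with the Neumann condition on $\partial\Omega$), and on $\Omega_-$ the map $M_-(\zeta)$ of $-\Delta$ at energy $\zeta=z/a$, so that the scaled inclusion contributes $a\,M_-(z/a)$. Reading the transmission conditions $u_+=u_-$, $\partial u_+/\partial n_+ + a\,\partial u_-/\partial n_-=0$ as a single coupling relation on $\Gamma$, Theorem~\ref{thm:BVP_def} yields (with the signs of the interface maps as in that theorem)
\[
(A_a-z)^{-1}=(A^{\rm D}_a-z)^{-1}-\gamma_a(z)\bigl(M_+(z)-a\,M_-(z/a)\bigr)^{-1}\gamma_a(\bar z)^{*},
\]
where $A^{\rm D}_a=(-\Delta)^{\Omega_+}_{\rm D,N}\oplus a(-\Delta)^{\Omega_-}_{\rm D}$ is the decoupled (interface-Dirichlet) operator. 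The inclusion block of the latter has resolvent $a^{-1}\bigl((-\Delta)^{\Omega_-}_{\rm D}-z/a\bigr)^{-1}=O(a^{-1})$ uniformly on $K_\sigma$, so the only surviving contributions to the limit come from the $\Omega_+$-block of the reference resolvent and from the coupling term.

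The heart of the argument is the asymptotics of $a\,M_-(z/a)$ as $a\to\infty$. I would split the boundary space as $\mathbb{C}\mathbbm 1_\Gamma\oplus(\mathbb{C}\mathbbm 1_\Gamma)^\perp$, constants versus their complement. Since $M_-(0)$ is the Steklov map of $\Omega_-$ with $\ker M_-(0)=\mathbb{C}\mathbbm 1_\Gamma$, coercive on the complement, the expansion $M_-(z/a)=M_-(0)+(z/a)M_-'(0)+O(a^{-2})$ shows that $a\,M_-(z/a)$ diverges like $a\,M_-(0)$ on the non-constant modes, while on constants Green's identity $\int_\Gamma M_-(z/a)\mathbbm 1_\Gamma=-(z/a)\int_{\Omega_-}u_-$ gives $\langle a\,M_-(z/a)\mathbbm 1_\Gamma,\mathbbm 1_\Gamma\rangle=-z|\Omega_-|+O(a^{-1})$. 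Thus the stiff inclusion enforces, in the limit, that the interface datum be constant (an effective Dirichlet condition on the non-constant part) and supplies a single scalar degree of freedom of strength $-z|\Omega_-|$ — precisely the internal variable $\eta$, normalised by $\sqrt{|\Omega_-|}$, that appears in $\dom\mathcal A_{\rm eff}$ and in \eqref{A_eff_form}.

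Next I would invert $M_+(z)-a\,M_-(z/a)$ by a Schur-complement computation adapted to this $1\oplus\infty$ splitting: on the non-constant block the inverse is $O(a^{-1})$ because that block is bounded below by $c\,a$, so the off-diagonal couplings feed into the constant block only at order $a^{-1}$, and the scalar Schur complement converges, with $O(a^{-1})$ error, to the finite quantity identified above. Performing the same reduction for $\mathcal A_{\rm eff}$ — itself a zero-range model with one-dimensional internal space, whose resolvent admits a Krein formula built from the same Poisson operator into $\Omega_+$ and this scalar Weyl quantity — and subtracting, all infinite-dimensional pieces cancel and the remainder is $O(a^{-1})$ uniformly on $K_\sigma$. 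Finally, transporting the comparison through the partial isometry $\Psi:L^2(\Omega)\to H_{\rm eff}$, $u\mapsto\bigl(u|_{\Omega_+},\,|\Omega_-|^{-1/2}\!\int_{\Omega_-}u\bigr)$, whose adjoint embeds $\mathbb{C}$ as the normalised constant on $\Omega_-$, yields $\bigl\|(A_a-z)^{-1}-\Psi^{*}(\mathcal A_{\rm eff}-z)^{-1}\Psi\bigr\|=O(a^{-1})$. Uniformity in $z\in K_\sigma$ follows since $K_\sigma$ stays at distance $\ge\sigma$ from the (real, nonnegative) spectra, so $M_\pm$, the Poisson operators and all inverses are analytic and uniformly bounded there.

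I expect the main obstacle to be exactly the uniform operator-norm inversion of $M_+(z)-a\,M_-(z/a)$, whose $-a\,M_-(z/a)$ term diverges on an infinite-dimensional subspace while staying finite and $z$-dependent on the one-dimensional space of constants. A crude energy estimate on the inclusion gives only $\|\nabla u_-\|_{L^2(\Omega_-)}=O(a^{-1/2})$; the sharp rate $O(a^{-1})$ is recovered only after the Schur complement establishes that the interface datum is itself constant up to $O(a^{-1})$ in the boundary norm, whence the inclusion field is within $O(a^{-1})$ of a constant in $L^2(\Omega_-)$. Keeping track, uniformly in $z$, of the cross terms between the constant and non-constant blocks and of the second-order term in $M_-(z/a)=M_-(0)+(z/a)M_-'(0)+O(a^{-2})$ is the crux of the estimate and the point where the stated error bound is won.
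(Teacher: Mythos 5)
Your proposal is correct and follows essentially the same route as the paper, whose proof of Theorem \ref{mmm} is deferred to \cite{CherKisSilva3} and whose method is sketched in Sections \ref{gen_res2} and \ref{setup_section}: a Krein-formula comparison on the interface $\Gamma$, with the decisive step being the asymptotics of $a\,M_-(z/a)$ split into the one-dimensional space of constants --- where Green's identity produces the scalar $-z|\Omega_-|$ that becomes the internal degree of freedom $\eta$ --- and its orthogonal complement, where coercivity of the Steklov operator yields an $O(a^{-1})$ inverse via the Schur complement. The only difference is one of packaging: the paper frames the limit as the Neumark--Strauss dilation of the generalised resolvent $P(A_a-z)^{-1}P^*$, whereas you compare the two Krein formulae directly, but the computational core is identical.
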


This theorem yields in particular the convergence (in the sense of Hausdorff) of the spectra of $A_a$ to that of $\mathcal A_{\rm eff}$. This convergence is uniform in $K_\sigma$, and its rate is estimated as $O(a^{-1})$. Moreover, it is shown that the spectrum of  $\mathcal A_{\rm eff}$ coincides with the spectrum of the electrostatic problem (\ref{electrostatic}).


Note that the form of $\mathcal A_{\rm eff}$ is once again identical to that of a zero-range model with an internal structure in the case when the internal space $E$ is one-dimensional. The obvious difference is that here the effective model of the medium is no longer ``zero-range'' per se; rather it pertains to a singular perturbation supported by the boundary $\Gamma$. Therefore, the result described above allows one to extend the notion of internal structure to the case of distributional perturbations supported by a curve, see also \cite{Kurasov_surfaces} where this idea was first suggested, although unlike above no asymptotic regularisation procedure was considered. Moreover, well in line with the narrative of preceding sections, the internal structure appears owing exclusively to the strong inhomogeneity of the medium considered.

We remark that a ``classical'' zero-range perturbation with an internal structure can still be obtained by a rather simple modification of the problem considered. Namely, let $a$ be fixed, and let the \emph{volume} of the inclusion $\Omega_-$ now wane to zero as the new parameter $\e\to 0$. This represents a model that has been studied in detail, see, e.g., \cite{Cap} and references therein. In this modified setup, a virtually unchanged argument leads to the inclusion being asymptotically modelled by a zero-range potential with an internal structure. Moreover, the dimension of the internal space $E$ is again equal to one, provided that a uniform norm-resolvent convergence is sought for the spectral parameter belonging to the compact $K_\sigma$.

\subsubsection{Internal structure with higher dimensions of internal space $E$}
\label{internal_sec}

A natural question must therefore be posed: can strongly inhomogeneous media only give rise to  simplest possible zero-range models with an internal structure, pertaining to the case of $\dim E=1$, or  is it possible to obtain effective models with more involved internal structures? It turns out that the second mentioned possibility is realised, which we will demonstrate briefly using the material of the preceding section.

Recall that in all  the results formulated above the uniform convergence was claimed under the additional assumption that the spectral parameter belongs to a fixed compact. If one drops this assumption, within the setup of the previous section one has the following statement.
\begin{theorem}
Up to a unitary equivalence, for and $k\in{\mathbb N}$ there exists a self-adjoint operator $\mathcal A_{\rm eff}$ of a zero-range model with an internal structure on the space $H_{\rm eff}:=L^2(\Omega_+)\oplus \mathbb{C}^k$ such that
\begin{equation}
(A_a-z)^{-1}\simeq \mathfrak{P}(\mathcal A_{\rm eff}-z)^{-1}\mathfrak{P} + O\bigl(\max\{a^{-1},|z|^{k+1}a^{-k}\}\bigr)
\label{double_star}
\end{equation}
in the uniform operator norm topology. Here $\mathfrak{P}$ is the orthogonal projection of $H_{\rm eff}$ onto $L^2(\Omega_+)\oplus \mathbb C$ (i.e., the space $H_{\rm eff}$ of the previous section).
\end{theorem}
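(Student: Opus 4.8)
The plan is to follow the route already used for Theorem~\ref{mmm}, namely to represent $(A_a-z)^{-1}$ through a Kre\u{\i}n-type resolvent formula attached to the interface $\Gamma$, and then to retain more than the leading term in the asymptotics of the inclusion's Weyl function. Using the boundary-triple description of $A_a$ in the form of \cite{Ryzh_spec}, one writes
$$(A_a-z)^{-1}=(A_{a,D}-z)^{-1}-\gamma(z)\,\bigl(M_a(z)\bigr)^{-1}\gamma(\bar z)^*,$$
where $A_{a,D}$ is the operator decoupled along $\Gamma$, $\gamma(z)$ is the associated solution ($\gamma$-field) operator, and $M_a(z)=M_+(z)+a\,\lambda_-(z/a)$ is the combined Dirichlet-to-Neumann map: here $M_+(z)$ is the DtN map of $\Omega_+$ (Neumann on $\partial\Omega$), while $\lambda_-(\zeta)$ is the DtN map of $-\Delta-\zeta$ on $\Omega_-$. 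The rescaling $a\,\lambda_-(z/a)$ is forced by the equation $-\Delta u_-=(z/a)u_-$ inside the inclusion, so that $z/a$ is the natural internal spectral parameter; this is precisely the mechanism permitting $z$ to grow with $a$.

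The next step is to expand $a\,\lambda_-(z/a)$ about $\zeta=0$ by the Green-identity expansion $\lambda_-(\zeta)=\lambda_-(0)+\sum_{n\ge1}\zeta^n C_n$, the $C_n$ being built from the Neumann resolvent of $\Omega_-$. On the complement of constants the operator $a\,\lambda_-(0)$ is positive of order $a$, so after inversion these ``stiff'' modes contribute only $O(a^{-1})$; this will be the irreducible floor in the estimate and reproduces the rate of Theorem~\ref{mmm}. The whole internal structure is therefore carried by the one-dimensional resonant channel spanned by $\mathbbm{1}_\Gamma$, on which $\lambda_-(0)$ vanishes. Substituting $\zeta=z/a$ there yields a scalar Nevanlinna function
$$\beta(z)=z\,|\Omega_-|+\sum_{n\ge2}\frac{z^n}{a^{n-1}}\,\frac{c_n}{n!},\qquad c_n=\langle\lambda_-^{(n)}(0)\mathbbm{1}_\Gamma,\mathbbm{1}_\Gamma\rangle,$$
whose leading coefficient $|\Omega_-|$ already matches the coupling in \eqref{A_eff_form} for the case $k=1$.

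The heart of the argument is to realise the first $k$ coefficients of $\beta(z)$ by a genuinely self-adjoint $k$-dimensional internal structure. Since $\beta$ is the compression of the honest Nevanlinna (DtN) function $a\,\lambda_-(z/a)$, its truncated power series is a Hamburger moment sequence and can be matched, up to its first neglected moment, by a continued-fraction (Pad\'e) approximant whose realisation is a $k\times k$ Jacobi matrix $T_a$ with $a$-dependent entries. Coupling $T_a$ to the boundary data at $\Gamma$ through the $\mathbbm{1}_\Gamma$ channel produces, via the generalised-resolvent correspondence of Section~\ref{gen_res1} between finite-dimensional self-adjoint out-of-space extensions and rational $B(z)$, a self-adjoint operator $\mathcal A_{\rm eff}$ on $L^2(\Omega_+)\oplus\mathbb C^k$ generalising \eqref{A_eff_form}. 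By construction its Weyl function agrees with $\beta(z)$ to order $k$, the discrepancy being the first omitted term $O(|z|^{k+1}a^{-k})$. Feeding the matched internal structure back into Kre\u{\i}n's formula and comparing with the generalised resolvent $\mathfrak{P}(\mathcal A_{\rm eff}-z)^{-1}\mathfrak{P}$ turns this Weyl-function approximation into the stated norm-resolvent bound, the stiff and truncation errors combining into $O(\max\{a^{-1},|z|^{k+1}a^{-k}\})$.

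The principal obstacle I expect is the uniform control of $\bigl(M_a(z)\bigr)^{-1}$ and of the $\gamma$-fields when $z$ is permitted to grow with $a$: one must track the growth in $z$ of $\gamma(z)$ and of the omitted moments, and guarantee that the remainder survives the inversion away from the (discrete) spectrum, so that the two error scales genuinely combine under the maximum. A second delicate point is the self-adjointness of $\mathcal A_{\rm eff}$: it rests on the fact that the truncated moment problem for the bona fide Nevanlinna function $\beta$ admits a Jacobi (hence Hermitian) solution, which is exactly what makes the finite internal structure realisable as an honest self-adjoint extension rather than a merely formal object.
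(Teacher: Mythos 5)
The paper itself contains no proof of this statement: being a survey, it records the theorem, displays the effective operator only for $k=2$, and refers the general mechanism to the discussion of generalised resolvents and Neumark--Strauss dilations in Sections \ref{gen_res1} and \ref{gen_res2} (the case $k=1$ being Theorem \ref{mmm}, proved in \cite{CherKisSilva3}). Measured against that advertised strategy, your route is the intended one: compress $(A_a-z)^{-1}$ to a Kre\u{\i}n formula on $\Gamma$ in the framework of \cite{Ryzh_spec}, identify the inclusion's contribution as $a\lambda_-(z/a)$ so that $z/a$ is the internal spectral parameter, discard the non-resonant modes of the inclusion's DtN map (elliptic of order $a$, hence an $O(a^{-1})$ floor after inversion), and realise the order-$k$ rational Herglotz (Pad\'e/Jacobi) approximant of the resonant scalar channel as a self-adjoint out-of-space extension with internal space ${\mathbb C}^k$. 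This is consistent with the displayed $k=2$ operator, whose internal part is a symmetric $a$-scaled $2\times 2$ block coupled to $L^2(\Omega_+)$ through the single channel ${\mathbbm 1}_\Gamma$ --- precisely a two-step Jacobi chain --- and the balance $a^{-1}\sim|z|^{k+1}a^{-k}$ you arrive at is exactly the one the paper extracts in Section 5.1 at the scaling $|z|\sim a^{(k-1)/(k+1)}$.

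What your write-up names but does not close --- and what constitutes the actual technical content of the theorem --- is: (i) uniform invertibility of $M_+(z)+a\lambda_-(z/a)$ and growth control of the $\gamma$-fields when $|z|$ is allowed to grow like a power of $a$, without which the stiff-mode and truncation errors need not combine under the maximum; (ii) the positivity of the Hankel determinants of the sequence $c_n=\bigl\langle\lambda_-^{(n)}(0){\mathbbm 1}_\Gamma,{\mathbbm 1}_\Gamma\bigr\rangle$, which does hold because the restriction of the DtN function to the constant channel is a Stieltjes function with representing measure supported on the Dirichlet spectrum of $\Omega_-$, so the Hermitian Jacobi realisation is legitimate rather than merely formal; and (iii) the partial-isometry bookkeeping (as in Theorem \ref{thm:ex0}) identifying the constants on $\Omega_-$ with the first internal coordinate, which is what converts your $L^2(\Omega_+)$-compressed statement into the form \eqref{double_star}, where $\mathfrak P$ projects onto $L^2(\Omega_+)\oplus{\mathbb C}$ and not onto $L^2(\Omega_+)$. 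None of these is a wrong turn; as a proof the text remains a programme at exactly the points where the estimates must be made uniform in the two parameters $z$ and $a$ simultaneously.
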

Note that unlike the results pertaining to the situation of the spectral parameter contained in a compact, here the leading-order term of the asymptotic expansion of the resolvent of the original operator $A_a$ is not the resolvent of some self-adjoint operator (unless $k=1$), but rather a generalised resolvent. It is also obvious that the concrete choice of $k$ to be used in the last theorem depends on the concrete relationship between $z$ and $a$ and on the error estimate sought: the error estimate of the theorem becomes tighter at $k$ increases. In essence, this brings about the understanding that despite the fact that on the face of it the problem at hand is one-parametric, it must be treated as having two parameters, $z$ and $a$.

The operator $\mathcal A_{\rm eff}$ of the last Theorem admits an explicit description for any $k\in\mathbb{N}$, but this description is rather involved. In view of better readability of the paper, we only present its explicit form in the case $k=2$:
\begin{equation*}
\mathcal A_{\rm eff}\left(\begin{array}{c}{u_+}\\{\eta_1}\\\eta_2\end{array}\right):=
\left(\begin{array}{c}-\Delta u_+\\[0.2em]
\dfrac{1}{\kappa}\mathop{{{\int}}}_{\Gamma}\dfrac{\partial u_+}{\partial n_+ }+ a(B^2 D^{-1}\eta_1 + B\eta_2)\\[0.8em]
a({B}\eta_1+D \eta_2)
\end{array}\right).
\end{equation*}
Here $B,D$ and $\kappa$ are real parameters, which are explicitly computed.

It should be noted that similar results can be obtained in the homogenisation-related setup of the previous section, see also Section \ref{sec:appl-cont-mech}.

\medskip

We can therefore conclude that zero-range models with an internal structure appear naturally in the asymptotic analysis of highly inhomogeneous media. Moreover, in the generic case they appear as Neumark-Strauss dilations (see \cite{Naimark1940, Naimark1943, Strauss, Strauss_survey}) of main order terms in the asymptotic expansions of the resolvents of problems considered. The complexity of the internal structure can be arbitrarily high (i.e., the dimension of the internal space $E$ can be made as high as required), provided that the spectral parameter $z$ is allowed to grow with the parameter $a\to\infty$ (or $\e\to 0$). Further, owing to the remark made above that an operator with a $\delta^{(n)}$-potential could be realized as a zero-range model with $\dim E= n$ for any natural $n$, we expect models with strong inhomogeneities to admit the role of the tool of choice in the regularisation of singular and super-singular perturbations, beyond the form-bounded case and including the case of singular perturbations supported by a curve or a surface.

\subsection{The r\^{o}le of generalised resolvents}

\label{gen_res2}

We close this section with a brief exposition of how precisely the asymptotic results formulated above are obtained. The analysis starts with the family of resolvents, say (in the case of Section \ref{connections_inhom}) $(A_\e -z)^{-1}$, describing the inhomogeneous medium at hand. One then passes over to the generalised resolvent $R_\e(z):=P(A_\e-z)^{-1}P^*,$ where $P$ denotes the orthogonal projection onto the ``part'' of the medium which is obtained by removing the inhomogeneities. Note that the generalised resolvent thus defined is a solution operator of a BVP pertaining to homogeneous medium, albeit subject to non-local $z-$dependant boundary conditions. The problem considered therefore reduces to the asymptotic analysis of the operator $B_\e(z)$, parameterising these conditions. As such, it becomes a classical problem of perturbation theory.

Assuming now, for the sake of argument, that $R_\e(z)$ has a limit, as $\varepsilon\to0,$ in the uniform operator topology for $z$ in a domain $D\subset\mathbb C$, and, further, 
that the resolvent $({A}_\e-z)^{-1}$ also admits such limit, one clearly has
\begin{equation}
P\bigl(A_{\text{eff}}-z\bigr)^{-1}P^*=R_0(z),\quad z\in D\subset \mathbb C,
\label{R_def}
\end{equation}
where $R_0$ and $A_{\text{eff}}$ are the limits introduced above. The idea of simplifying the required analysis by passing to the resolvent ``sandwiched'' by orthogonal projections onto a carefully chosen subspace is in fact the same as in \cite{MR0217440}, where the resulting sandwiched operator is shown to be the resolvent of a dissipative operator. 

The function  $R_0$ defined by (\ref{R_def}) is a generalised resolvent, whereas $A_{\text{eff}}$ is its out-of-space self-adjoint extension (or {\it Neumark-Strauss dilation} \cite{Strauss}). By a theorem of Neumark \cite{Naimark1940} (see Section \ref{gen_res1} of the present paper) this dilation is defined uniquely up to a unitary transformation of a special form, provided that the minimality condition
holds. The latter can be reformulated along the following lines: one has minimality, provided that there are no eigenmodes in the effective media modelled by the operator $A_\varepsilon,$ and therefore in the medium modelled by the operator $A_{\text{eff}}$ as well, such that they ``never enter'' the part of the medium without inhomogeneities. A quick glance at the setup of our models helps one immediately convince oneself that this must be true.
It then follows that the effective medium is completely determined, up to a unitary transformation, by $R_0(z).$ Once this is established, it is tempting to construct its Neumark-Strauss dilation and conjecture, that it is precisely this dilation that the original operator family converges to in the norm-resolvent sense (of course, up to a unitary transformation).

This conjecture in fact holds true, although it is impossible to prove it on the abstract level: taking into account no specifics of problems at hand, one can claim weak convergence at best. Still, the approach suggested seems to be very transparent in allowing to grasp the substance of the problem and to almost immediately ``guess'' correctly the operator modelling the effective medium.


\section{Applications to continuum mechanics and wave propagation}
\label{sec:appl-cont-mech}

Parameter-dependent problems for differential equations have traditionally attracted much interest within applied mathematics, by virtue of their potential for
replacing complicated formulations with more straightforward, and often explicitly solvable, ones. This drive has led to a plethora of asymptotic techniques, from perturbation theory to multi-scale analysis, covering a variety of applications to physics, engineering, and materials science. While this subject area can be viewed as ``classical", problems that require new ideas continue emerging, often motivated by novel wave phenomena. One of the recent application areas of this kind is provided by composites and structures involving components with highly contrasting material properties  (stiffness, density, refractive index). Mathematically, such problems lead to boundary-value formulations for differential operators with parameter-dependent coefficients. For example, problems of this kind have arisen in the study of periodic composite media with ``high contrast" (or ``large coupling")
between the material properties of the components, see \cite{HempelLienau_2000, Zhikov2000, CherErKis}.

In what follows, we outline how the contrast parameter emerges as a result of dimensional analysis, using a scalar elliptic equation of second order with periodic coefficients as a prototype example.

\subsection{Scaling regimes for high-contrast setups}

 We will consider the physical context of elastic waves propagating through a medium with whose elastic moduli vary periodically in a chosen plane (say $(x_1,x_2)$-plane) and are constant in the third, orthogonal, direction (say, the $x_3$ direction). For example, one could think of a periodic arrangement of parallel fibres of a homogeneous elastic material within a ``matrix" of another homogeneous elastic material. We will look at the ``polarised" anti-plane shear waves, which can be described completely by a scalar function representing the displacement of the medium in the $x_3$ direction. In the case of the fibre geometry mentioned above, the relevant elastic moduli ${G}$ then have the form
\[
{G}(y)=\left\{\begin{array}{c}{G}_0,\quad y\in Q_0,\\[0.2em] {G}_1,\quad y\in Q_1,\end{array}\right.\ \ =:\left\{\begin{array}{c}{G}_0\\[0.1em] {G}_1\end{array}\right\}(y).
\]
where $Q_0,$ $Q_1$ are the mutually complementary cross-sections of the fibre and matrix components, respectively, so that $\overline{Q}_0\cup\overline{Q}_1=[0,1]^2.$ the mass density of the described composite medium is assumed to be constant. (The constants $G_0,$ $G_1$ are the so-called shear moduli of the materials occupying $Q_0,$ $Q_1.$) This physical setup was considered in \cite{MMP, poulton}.

Denote by $d$ the period of the original ``physical" medium and consider time-harmonic wave motions, i.e. solutions of the wave equation that have the form
\begin{equation}
U(x, t)={\rm e}^{{\rm i}\omega t}u(x),\qquad x\in{\mathbb R}^2,\ \ t\ge0,
\label{harmonic}
\end{equation}
where $\omega$ is a fixed frequency. In the setting of time-harmonic waves, see (\ref{harmonic}), the function $u=u(x)$  satisfies the following equation, written in terms of the original physical units:
\begin{equation}
-\nabla_x\cdot\left\{\begin{array}{c}{G}_0\\[0.1em] {G}_1\end{array}\right\}(x/d)\nabla_xu=\rho\omega^2u,
\label{sigma_eq}
\end{equation}
Multiply both sides by ${G}_1^{-1}$ and denote $\delta:={G}_0/{G}_1.$ the parameter $\delta$ represents the ``inverse contrast", which will be assumed ``small" later, and corresponds to the value $a^{-1}$ of the ``large" parameter of Sections \ref{large_coupling_lim}, \ref{internal_sec}. The equation (\ref{sigma_eq}) takes the form
\[
-\nabla_x\cdot\left\{\begin{array}{c}\delta\\ 1\end{array}\right\}(x/d)\nabla_xu=\frac{\rho}{{G}_1}\omega^2u
\]
Note that
\begin{equation}
	\omega=\frac{2\pi c_1}{\lambda_1}=\frac{2\pi c_0}{\lambda_0},
	\label{omega_form}
\end{equation}
where $c_j,$ $\lambda_j$ are the wave speed and wavelength in the relevant media ($j=0,1$).

Introduce a non-dimensional spatial variable $\tilde{x}=2\pi x/\lambda_1:$
\[
-\frac{4\pi^2}{\lambda_1^2}\nabla_{\tilde{x}}\cdot\left\{\begin{array}{c}\delta\\ 1\end{array}\right\}
\biggl(\frac{\tilde{x}}{2\pi d/\lambda_1}\biggr)\nabla_{\tilde{x}}u=\frac{\rho}{{G}_1}\omega^2u,
\]
equivalently, with $\varepsilon:=2\pi d/\lambda_1:$
\begin{equation*}
	-\nabla_{\tilde{x}}\cdot\left\{\begin{array}{c}\delta\\ 1\end{array}\right\}(\tilde{x}/\varepsilon)\nabla_{\tilde{x}}u=\frac{\rho}{{G}_1}c_1^2u,
\end{equation*}
where we have used (\ref{omega_form}). Note that $c_1\sqrt{\rho/{G}_1}=1$
and relabel $\tilde{x}$ by $x:$
\[
-\nabla_x\cdot\left\{\begin{array}{c}\delta\\ 1\end{array}\right\}(x/\varepsilon)\nabla_xu=u,
\]

Let us ``scale to the period one" i.e.  consider the change of variable $y=\tilde{x}/\varepsilon=x/d:$
\begin{equation*}
	-\varepsilon^{-2}\nabla_y\cdot\left\{\begin{array}{c}\delta\\ 1\end{array}\right\}(y)\nabla_yu=u,
\end{equation*}
or
\begin{equation*}
	-\nabla_y\cdot\left\{\begin{array}{c}\delta\\ 1\end{array}\right\}(y)\nabla_yu=\biggl(\frac{2\pi d}{\lambda_1}\biggr)^2u.
\end{equation*}
The different scaling regimes, ranging from  what we know as ``finite frequency, high contrast" to ``high frequency, high contrast", are described by setting
\begin{equation}
\varepsilon^2=\delta^\nu\widetilde{z},
\label{epsdelz}
\end{equation}
where $\widetilde{z}$ is obviously dimensionless is assumed to vary over the compact $K_\sigma,$ and $0\le\nu\le 1.$ Note that $\widetilde{z}$ can be alternatively expressed as
\begin{equation}
	\widetilde{z}=\delta^{-\nu}\rho{G}_1^{-1}(d\omega)^2.
	\label{zform}
\end{equation}

In particular, the setup analysed in the paper \cite{CherErKis} corresponds to the case $\nu=1:$
\begin{equation}
	-\delta^{-1}\nabla_y\cdot\left\{\begin{array}{c}\delta\\ 1\end{array}\right\}(y)\nabla_yu=zu.
	\label{ff_PDE}
\end{equation}

In terms of the original spatial variable $x$ the equation (\ref{ff_PDE}) takes the form
\[
-d^2\delta^{-1}\nabla_{x}\cdot\left\{\begin{array}{c}\delta\\ 1\end{array}\right\}(x)\nabla_xu=zu,
\]
or
\[
-\nabla_{x}\cdot\left\{\begin{array}{c}\delta\\ 1\end{array}\right\}(x)\nabla_xu=k^2u,
\]
where the wavenumber (i.e. ``spatial frequency") is given by
$k:=d^{-1}\sqrt{\delta z},$
so that
\begin{equation*}
(kd)^2=\frac{z}{n^2},
\end{equation*}
 where $n^2=\delta^{-1}=G_0/G_1$ is the shear modulus of the material occupying $Q_0$ relative to the material occupying $Q_1.$

The setup (\ref{ff_PDE}) is the ``periodic" version of the formulation discussed in Section \ref{PDE_model_bounded}, see also Section \ref{connections_inhom} for the one-dimensional version of a high-contrast homogenisation problem that gives rise to the same formulation. Similarly, choosing the values $\nu=2/(k+1),$ $k=2,3,\dots,$ in (\ref{epsdelz}) gives rise to ``high-frequency large-coupling" formulations, which in turn lead, in the limit as $\delta\to0,$ to effective operators with an ``internal space" of dimension $k,$ see Section \ref{internal_sec}. The parameter $\widetilde{z}$ is the related to the spectral parameter $z$ in (\ref{double_star}) via
\[
z=\widetilde{z}\delta^{1-\nu}=\widetilde{z}\delta^{\frac{k-1}{k+1}}=\widetilde{z}a^{\frac{k+1}{k-1}},\qquad k=2,3,\dots,
\]
so that the error estimate in (\ref{double_star}) is optimal for $\widetilde{z}\in K_\sigma,$ in the sense that for such $\widetilde{z}$ it yields an error of order $a^{-1}\sim |z|^{k+1}a^{-k}$ for large $a.$


\subsection{Homogenisation of composite media with resonant components}

\subsubsection{Physical motivation}

The mathematical theory of homogenisation (see
{\it e.g.} \cite{Lions, Bakhvalov_Panasenko, Jikov_book}) aims at characterising limiting, or ``effective'', properties of small-period composites. Following an appropriate non-dimensionalisation procedure, a typical problem here is to study the asymptotic behaviour of solutions to equations of the type
\begin{equation}
	-{\rm div}\bigl(A^\varepsilon(\cdot/\varepsilon)\nabla u_\e\bigr)-\widetilde{\omega}^2u_\e=f,\ \ \ \ f\in L^2({\mathbb R}^d),\quad d\ge2,\qquad \widetilde{\omega}^2\notin{\mathbb R}_+,
	\label{eq:generic_hom}
\end{equation}
where for all $\varepsilon>0$ the matrix $A^\varepsilon$ is $Q$-periodic, $Q:=[0,1)^d,$ non-negative, bounded, and symmetric. The parameter $\widetilde{\omega}$ here represents a ``non-dimensional frequency": $\widetilde{\omega}^2=\widetilde{z},$ where $\widetilde{z}$ is the spectral parameter introduced in (\ref{epsdelz}), so for example for $\nu=1$ one can set $\widetilde{\omega}=d\sqrt{\rho/G_0}\,\omega,$ see (\ref{zform}).

One proves (see \cite{Zhikov_1989, BirmanSuslina} and references therein) that when $A$ is uniformly elliptic, there exists a constant matrix $A^\hom$ such that solutions $u_\e$ to \eqref{eq:generic_hom} converge to $u_\hom$ satisfying
\begin{equation}
	-{\rm div}\bigl(A^\hom \nabla u_\hom\bigr)-\widetilde{\omega}^2u_\hom=f.
	\label{eq:generic_Ahom}
\end{equation}
In what follows we write $\omega,$ $z$ in place of $\widetilde{\omega},$
$\widetilde{z},$ implying that either the dimensional or non-dimensional version of the equation is chosen.

In recent years, the subject of modelling and engineering a class of composite media with ``unusual" wave properties (such as negative refraction) has been brought to the forefront of materials science. Such media are generically referred to as metamaterials, see {\it e.g.} \cite{Phys_book}. In the context of 
homogenisation, the result sought ({\it i.e.}, the ``metamaterial" behaviour in the limit of vanishing $\e$) belongs to the domain of the so-called time-dispersive media (see, {\it e.g.}, \cite{Tip_1998,Figotin_Schenker_2005,Tip_2006,Figotin_Schenker_2007b}). For such media, in the frequency domain one faces equations of the form
\begin{equation}
	-{\rm div}\bigl(A\nabla u\bigr)+\mathfrak B(\omega^2)u=f,\ \ \ \ f\in L^2({\mathbb R}^d),\
	\label{eq:generic_td}
\end{equation}
where $A$ is a constant matrix and $\mathfrak B(\omega^2)$ is a frequency-dependent operator in $L^2({\mathbb R}^d)$ taking the place of $-\omega^2$ in (\ref{eq:generic_hom}), if, for the sake of argument, in the time domain we started with an equation of second order in time.
If, in addition, the matrix function $\mathfrak B$ is scalar, {\it i.e.}, $\mathfrak B(\omega^2)=\beta(\omega^2)I$ with a scalar function $\beta$, the problem of the type
\begin{equation}
	-{\rm div}\bigl(A^{\rm hom}(\omega^2)\nabla u\bigr)=\omega^2 u
	\label{eq:generic_spectral_td}
\end{equation}
appears in place of the spectral problem after a formal division by $-\beta(\omega^2)/\omega^2$, with frequency-dependent (but independent of the spatial variable) matrix $A^{\rm hom}(\omega^2).$

In the equation (\ref{eq:generic_spectral_td}), in contrast to (\ref{eq:generic_Ahom}), the matrix elements of $A^{\rm hom}$, interpreted as material parameters of the medium, acquire a non-trivial dependence on the frequency, which may lead to their taking negative values in some frequency intervals. The possibility of electromagnetic media exhibiting negative refraction was envisaged in an early work \cite{Veselago}, who showed theoretically that the material properties of such media must be frequency-dependent, and the last two decades have seen a steady advance towards realising such media experimentally.
One may hope that upon relaxing the condition of uniform ellipticity on $A^\varepsilon$ one may be able to achieve a metamaterial-type response to wave propagation for sufficiently small values of $\varepsilon.$ It is therefore important to understand how inhomogeneity in the spatial variable in \eqref{eq:generic_hom} can lead, in the limit $\e\to0,$ to frequency dispersion as in (\ref{eq:generic_td}).

\subsubsection{Operator-theoretic motivation}

Already in the setting of finite-dimensional matrix algebra equations of the
form (see (\ref{eq:generic_td}))
\begin{equation}
Au+{\mathfrak B}(z)u=f,\qquad u\in{\mathbb C}^d,
\label{zdep_form}
\end{equation}
where $A=A^*\in{\mathbb C}^{d\times d},$
$f\in{\mathbb C}^d,$ ${\mathfrak B}$ is a Herglotz function with values in ${\mathbb C}^{d\times d},$  emerge when one seeks solutions to the standard resolvent equation for a block matrix:
\begin{equation}
\left(\begin{array}{cc}A&B\\[0.2em]B^*&C\end{array}\right)\left(\begin{array}{c}u\\[0.2em]v\end{array}\right)-z\left(\begin{array}{c}u\\[0.2em]v\end{array}\right)=\left(\begin{array}{c}f\\[0.2em]0\end{array}\right),\qquad\left(\begin{array}{c}u\\[0.2em]v\end{array}\right)\in{\mathbb C}^{d+k}.
\label{block_matrix}
\end{equation}
where $B\in{\mathbb C}^{k\times d},$  $C=C^*\in{\mathbb C}^{k\times k}.$

Indeed, it is the result of a straightforward calculation that (\ref{block_matrix}) implies
\[
Au-(B(C-z)^{-1}B^*+zI)u=f,
\]
whenever $-z$ is not an eigenvalues of $C,$ so (\ref{block_matrix}) implies (\ref{zdep_form}) with ${\mathfrak B}(z)=-B(C-z)^{-1}B^*-zI.$ Another consequence of the above calculation is that for any vector $(f, g)^\top\in {\mathbb C}^{d+k}$ one has
\[
u=P\biggl\{\left(\begin{array}{cc}A&B\\[0.2em]B^*&C\end{array}\right)+zI\biggr\}^{-1}P*\left(\begin{array}{c}f\\[0.2em]g\end{array}\right),
\]
where $P$ is the orthogonal projection of ${\mathbb C}^{d+k}$ onto ${\mathbb C}^k$ and $P^*$ is interpreted as a restriction to the $k$-dimensional subspace of vectors of the form $(f,0)^\top,$ $f\in{\mathbb C}^k.$

The above argument, in the more general setting of block operator matrices in a Hilbert space, likely appeared for the first time in \cite{Figotin_Schenker_2005}.  ``Generalised resolvents", i.e. objects of the form
\begin{equation}
	P({\mathcal A}-z)^{-1}P^*,
	\label{AP_form}
\end{equation}	
	 where ${\mathcal A}$ is an operator in a Hilbert space ${\mathcal H}$ and $P$ is an orthogonal projection of $\mathcal H$ onto its subspace $H$ have already been discussed  in the present survey, see Sections \ref{gen_res1}, \ref{gen_res2}. As discussed in Section \ref{gen_res1}, abstract results of Neumark and Strauss \cite{Naimark1940, Strauss} establish that solution operators for formulations (\ref{zdep_form}), where $A$ is a self-adjoint operator in a Hilbert space $H$ can be written in the form (\ref{AP_form}) for a suitable ``out-of-space" extension ${\mathcal A}.$ Therefore, a natural question is whether formulations (\ref{eq:generic_td}) can be viewed as generalised resolvents obtained by an asymptotic analysis of some parameter-dependent operator family describing a heterogeneous medium. One piece of evidence pointing at the validity of such a conjecture is the result of Section \ref{large_coupling_lim}, where the role of the operator ${\mathcal A}$ in (\ref{AP_form}) is played by ${\mathcal A}_{\rm eff},$ see (\ref{A_eff_form}).

In \cite{KCher, CEK, CEKN}
a model of a high-contrast graph periodic along a single direction was considered. A unified treatment of critical-contrast homogenisation was proposed and carried out in three distinct cases: (i) where neither the soft nor the stiff component of the medium is connected; (ii) where the stiff component of the medium is connected; (iii) where the soft component of the medium is connected. The analytical toolbox presented in these works  was then amplified to the PDE setting in \cite{CherErKis}. In the wider context of operator theory and its applications, this provides a route towards:
constructing explicit spectral representations and functional models for both homogenisation limits of critical-contrast composites and the related time-dispersive models, as well as solving the related direct and inverse scattering problems.

\subsubsection{Prototype problem setups in the PDE context}

\label{setup_section}
Consider the problem \eqref{eq:generic_hom} under the following assumptions:
\begin{equation*}
	A^\e(y)=
	\begin{cases}
		aI,& y\in Q_{\text{stiff}},\\[0.25em]
		\varepsilon^2I, & y\in Q_{\text{soft}},
	\end{cases}
\end{equation*}
where $Q_{\text{soft}}$ ($Q_{\text{stiff}}$) is the soft (respectively, stiff) component of the unit cube $Q=[0,1)^d\subset\mathbb R^d$, so that $\overline{Q}=\overline{Q}_{\text{soft}}\cup \overline{Q}_{\text{stiff}},$ and $a>0.$

Two distinct setups were studied in \cite{CherErKis}.
For one of them (``Model I"), which is unitary equivalent to the model of \cite{HempelLienau_2000, Friedlander}, the component $Q_{\text{soft}}\subset Q$ is simply connected and
and its distance to $\partial Q$ is positive, {\it cf.} \cite{Zhikov2000, CherCoop}. For the other one (``Model II") the component $Q_{\text{stiff}}$ has the described properties.
It is assumed that the Dirichlet-to-Neumann maps for $Q_{\rm soft}$ and $Q_{\rm stiff},$ which map the boundary traces of harmonic functions in $Q_{\rm soft}$ and $Q_{\rm stiff}$  to their boundary normal derivatives, are well-defined as pseudo-differential operators of order one in the $L^2$ space on the boundary \cite{Hoermander, Friedlander_old, Arendt, AGW}.
\begin{figure}[h!]
	\begin{center}
		\includegraphics[scale=0.3]{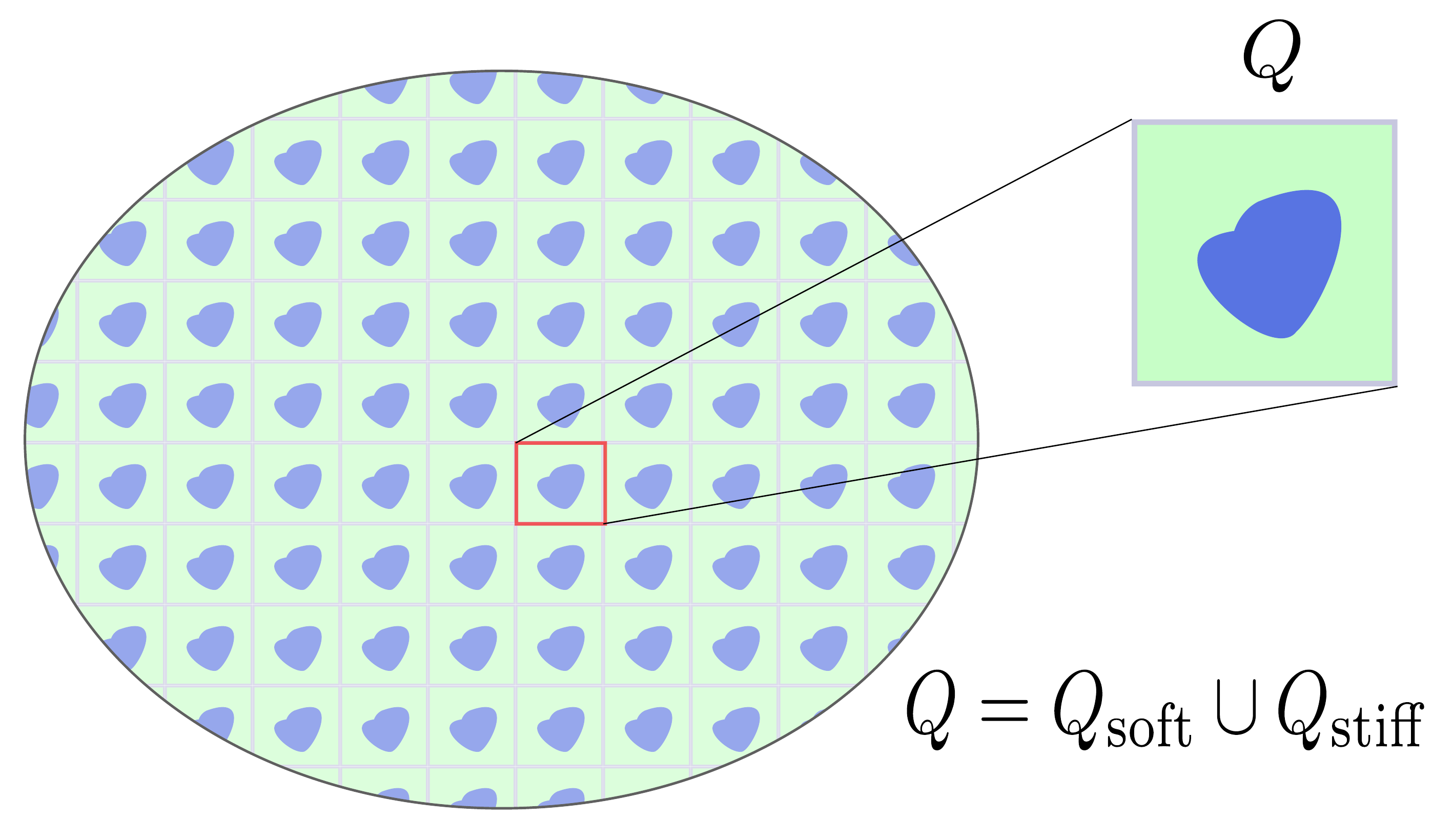}
	\end{center}
	\caption{{\scshape Model setups.} {\small Model I: soft component $Q_\soft$ in blue, stiff component $Q_\stiff$ in green. Model II: soft component $Q_\soft$ in green, stiff component $Q_\stiff$ in blue.}\label{fig:media}}
\end{figure}


For both above setups, \cite{CherErKis} deals with the resolvent $({A}_\e-z)^{-1}$ of a self-adjoint operator in $L^2(\mathbb R^d)$ corresponding to the problem \eqref{eq:generic_hom}, so that its solutions are expressed as $u_\e=({A}_\e-z)^{-1}f$ with $z=\omega^2.$ For each $\varepsilon>0,$ the operator ${A}_\e$ is defined by the forms
$$
\int_{{\mathbb R}^d}A^\e(\cdot/\e)\nabla u\cdot \overline{\nabla u},\qquad u, v\in H^1({\mathbb R}^d).
$$
It is assumed that $z\in \mathbb C$ is separated from the spectrum of the original operator family, in particular $z\in K_\sigma,$ where $K_\sigma$ is defined in Theorem \ref{kc}.

In order to deal with operators having compact resolvents, it is customary to apply
Gelfand  transform \cite{Gelfand}, which we review next.

\subsubsection{Gelfand transform and direct integral}
\label{Gelfand_section}
The version of the Gelfand transform convenient for the analysis of the operators $A_\varepsilon$ is defined on functions $u\in L^2(\mathbb R^d)$ by the formula\footnote{The formula (\ref{Gelfand_formula}) is first applied to continuous functions $U$ with compact support, and then extended to the whole of $L^2({\mathbb R}^d)$ by continuity.} ({\it cf.} (\ref{Gelfand_formula}))
\begin{equation*}
	G_\varepsilon u(y,t):=
	\biggl(\frac{\varepsilon^2}{2\pi}\biggr)^{d/2}\sum_{n\in{\mathbb Z}^d}u\bigl(\varepsilon(y+n)\bigr)\exp\bigl(-{\rm i}\tau\cdot(y+n)\big),\ \ \ y\in Q,\ \ \tau\in Q':=[-\pi,\pi)^d,
\end{equation*}
This yields a unitary operator $G_\varepsilon: L^2({\mathbb R}^d)\longrightarrow L^2(
Q\times Q'),$ and the inverse
with the inverse mapping given by
\begin{equation*}
	u(x)=(2\pi)^{-d/2}\int_{Q'}G_\varepsilon u\biggl(\frac{x}{\varepsilon},\tau\biggr)\exp\biggl({\rm i}\tau\cdot \frac{x}{\varepsilon}\biggr)d\tau,\ \ \ \ x\in{\mathbb R}^d,
\end{equation*}
where $G_\varepsilon u$ is extended to ${\mathbb R}^d\times Q'$ by $Q$-periodicity in the spatial variable.

As in \cite{BirmanSuslina}, an application of the Gelfand transform $G$ to the operator family ${A}_\e$ corresponding to the problem \eqref{eq:generic_hom} yields the two-parametric family $A_\e^{(\tau)}$ of operators in $L^2(Q)$ given by the differential expression
$$
-(\nabla + {\rm i}\tau)A^\e(x/\e)(\nabla +{\rm i}\tau), \qquad \varepsilon>0,\quad \tau\in Q',
$$
subject to periodic boundary conditions $\partial(\varepsilon Q)$ and defined by the corresponding closed coercive sesquilinear form.
For each $\varepsilon>0,$ the operator ${A}_\e$ is then unitary equivalent to the von Neumann integral (see {\it e.g.} \cite[Chapter 7]{MR1192782}) of $A_\varepsilon^{(\tau)}:$
$$
{A}_\e=G_\varepsilon^*\biggl(\oplus \int_{Q'} {A}_\e^{(\tau)}d\tau\biggr)G_\varepsilon.
$$

Similar to \cite{Friedlander} and facilitated by the abstract framework of \cite{Ryzh_spec},
the operator ${A}_\e^{(\tau)}$ can be associated to transmission problems \cite{Schechter}, akin to those considered in Section \ref{large_coupling_lim}. To this end, consider $Q$ as a torus with the opposite parts of $\partial Q$ identified, and view $Q_\soft$ and $Q_\stiff$ as subsets of this torus.
Furthermore, in line with the notation of Section \ref{large_coupling_lim}, denote by $\Gamma$ the interface between $Q_{\rm soft}$ and $Q_{\rm stiff}.$  For each $\varepsilon,$ $\tau,$ $f\in L^2(Q),$ the transmission problem is formulated as finding a function $u\in L^2(Q)$ such that $u\vert_{Q_\soft}\in H^1(Q_\soft),$ $u\vert_{Q_\stiff}\in H^1(Q_\stiff),$
that solves, in the weak sense, the boundary-value problem ({\it cf.} (\ref{eq:transmissionBVP}))
\begin{equation*}
	\begin{cases}
		&-\e^{-2}(\nabla +{\rm i}\tau)^2u_+-zu_+=f\quad {\rm in}\ \  Q_\stiff,\\[0.45em]
		&-(\nabla +{\rm i}\tau)^2u_--zu_-=f\quad {\rm in}\ \  Q_\soft,\\[0.45em]
		&u_+=u_-,\qquad
		\biggl(\dfrac{\partial}{\partial n_+}+{\rm i}\tau\cdot n_+\biggr)u_++\e^{-2}\biggl(\dfrac{\partial}{\partial n_-}+{\rm i}\tau\cdot n_-\biggr)u_-=0,\quad {\rm on}\ \ \Gamma.
	\end{cases}
\end{equation*}
where
 $n_+$ and $n_-=-n_+$ are the outward normals to $\Gamma$ with respect to $Q_\soft$ and $Q_\stiff.$
By a classical argument the weak solution of the above problem is shown to coincide with $({A}_\e^{(\tau)}-z)^{-1}f.$

\subsubsection{Homogenised operators and convergence estimates}
Throughout this section, $H_{\rm hom}:=L^2(Q_\soft)\oplus \mathbb C^1,$ ${\mathcal H}:=L^2(\Gamma),$ and $\partial_n^\tau u:=-(\partial u/\partial n+{\rm i}\tau\cdot n u)|_\Gamma$ is the co-normal boundary derivative for $Q_\soft.$


{\sc Model I.}
Set
\begin{equation*}
	\dom \mathcal A_{\hom}^{(\tau)}=\Bigl\{(u,\beta)^\top\in H_\hom:\ u\in H^2(Q_\soft),  u|_\Gamma=\bigl\langle u|_\Gamma,\psi_0\bigr\rangle_{{\mathcal H}}\psi_0 \text{ and }  \beta = \kappa\bigl\langle u|_\Gamma, \psi_0\bigr\rangle_{{\mathcal H}}\Bigr\},
\end{equation*}
where
 $\kappa:=|Q_\stiff|^{1/2}/|\Gamma|^{1/2},$ $\psi_0(x)=|\Gamma|^{-1/2},$ $x\in \Gamma,$ and define
\begin{equation*}
	\mathcal A_{\hom}^{(\tau)}\binom{u}{\beta}=
	\left(\begin{array}{c}-(\nabla+{\rm i}\tau)^2 u\\[0.5em]
		- \kappa^{-1}\bigl\langle\partial_n^\tau u|_\Gamma, \psi_0\bigr\rangle_{{\mathcal H}}
		- \kappa^{-2}\e^{-2}(\mu_*\tau\cdot\tau)\beta
	\end{array}\right),\qquad \binom{u}{\beta}\in \dom \mathcal A_{\hom}^{(\tau)},
\end{equation*}
where $\mu_*\tau\cdot\tau$ is the leading-order term (for small $\tau$) of the first Steklov eigenvalue for $-(\nabla+{\rm i}\tau)^2$ on $Q_{\rm soft}.$



{\rm Model II.} 
Set 
\begin{equation*}
	\dom \mathcal A_{\hom}^{(\tau)}=\Bigl\{(u,\beta)^\top\in H_\hom:\ u\in H^2(Q_\soft),  u|_\Gamma=\bigl\langle u|_\Gamma,\psi_\tau\bigr\rangle_{{\mathcal H}}\psi_\tau \text{ and }  \beta = \kappa\bigl\langle u|_\Gamma, \psi_\tau\bigr\rangle_{{\mathcal H}}\Bigr\},
\end{equation*}
where $\kappa$ is as above
and $\psi_\tau(x)= |\Gamma|^{-1/2}\exp(-{\rm i}\tau\cdot x)|_\Gamma,$ $x\in\Gamma.$
The action of the operator is set by
\begin{equation*}
	\mathcal A_{\hom}^{(\tau)}\binom{u}{\beta}=
	\left(\begin{array}{c}-(\nabla+{\rm i}\tau)^2 u\\[0.5em]
		-\kappa^{-1}\bigl\langle \partial_n^\tau u|_\Gamma, \psi_\tau\bigr\rangle_{{\mathcal H}}
	\end{array}\right),\qquad \binom{u}{\beta}\in \dom \mathcal A_{\hom}^{(\tau)}.
\end{equation*}

{\sc Convergence estimate.} Set $\gamma=2/3$ for the case of Model I and $\gamma=2$ for the case of Model II. The resolvent $({A}_\e^{(\tau)}-z)^{-1}$ admits the following estimate in the uniform operator-norm topology:
\begin{equation}
	\bigl({A}_\e^{(\tau)}-z\bigr)^{-1}-\Theta^*\bigl(\mathcal A_{\hom}^{(\tau)}-z\bigr)^{-1}\Theta=O(\e^\gamma),
	\label{main_est}
\end{equation}
where $\Theta$ is a partial isometry from $L^2(Q)$ onto $H_\hom:$ on the subspace $L^2(Q_{\rm soft})$ it coincides with the identity, and each function from $L^2(Q_\stiff)$ represented as an orthogonal sum
\[
c_\tau\Vert \Pi_\stiff \psi_\tau\Vert^{-1}\Pi_\stiff \psi_\tau\oplus \xi_\tau,\qquad c_\tau\in{\mathbb C}^1,
\]
is mapped to $c_\tau$ unitarily. Here $\Pi_{\rm stiff}$ maps $\varphi$ on $\Gamma$ to the solution $u_\varphi$ of $-(\nabla+{\rm i}\tau)^2u_\varphi=0$ in $Q_{\rm soft},$ $u_\varphi\vert_{\Gamma}=\varphi.$
The estimate (\ref{main_est}) is uniform in $\tau\in Q'$ and $z\in K_\sigma$.

\section*{Acknowledgements}
KDC is grateful for the financial support of EPSRC Grants EP/L018802/2,
EP/V013025/1. The work of all authors has been
supported by CONACyT CF-2019 No.\,304005.




\begin{thebibliography}{99}

\bibitem{AGW} H. Abels, G. Grubb, I. G. Wood, 2014. Extension theory and Kre\u\i n-type resolvent formulas for non-smooth boundary value problems. {\it J. Func. Anal.} {\bf 266}(7), 4037--4100.	

\bibitem{MR0206711}
V.~M.~Adamjan, D.~Z.~Arov, 1966.
\newblock Unitary couplings of semi-unitary operators. (Russian)
\newblock {\em Mat. Issled.} 1(2), 3--64,
\newblock English translation in: {\it Amer. Math Soc. Transl. Ser. 2,} 95, 1970

\bibitem{AdamyanPavlov}
V. M. Adamyan, B. S. Pavlov, 1986. Zero-radius potentials and M. G. Kre\u\i n's formula for generalized resolvents. {\em J. Soviet Math.} 42(2), 1537--1550.

\bibitem{kura}
S. Albeverio, P. Kurasov, 2000.
\emph{Singular perturbations of differential operators.} Cambridge University Press.


\bibitem{AlonsoSimon}
A. Alonso, B.~Simon, 1980. The Birman-Krein-Vishik theory of self-adjoint extensions
of semibounded operators, {\it J. Oper. Th.} 4, 251--270.



\bibitem{Agranovich} M. S. Agranovich, 2015. {\it Sobolev Spaces, Their Generalizations, and Elliptic Problems in Smooth and Lipschitz Domains.} Springer.

\bibitem{AG} I. M. Akhiezer, N. I. Glazman, 1963. {\it Theory of Linear Operators in Hilbert Space, Vol.\,II,} Frederick Ungar Publishing Co.





\bibitem{Cap}  G. S. Alberti, Y. Capdeboscq, 2018. {\it Lectures on elliptic methods for hybrid inverse problems.} Cours Sp\'ecialis\'es 25. Soci\'et\'e Math\'ematique de France, Paris, vii\,+\,230\,pp.


\bibitem{AKKL}
H. Ammari, H. Kang, K. Kim, H. Lee, 2013. Strong convergence of the solutions of the linear elasticity and uniformity of asymptotic expansions in the presence of small inclusions. {\it J. Differential Equations} 254, 4446--4464.


\bibitem{Arendt} W. Arendt, A. F. M. ter Elst, J. B. Kennedy, M. Sauter, 2014. The Dirichlet-to-Neumann operator via hidden compactness. {\it J. Funct. Anal.} 266(3), 1757--1786.



\bibitem{Behrndt2007}
J.~Behrndt, M.~M.~Malamud, H.~Neidhardt, 2007.
\newblock Scattering theory for open quantum systems with finite rank coupling.
\newblock {\em Math. Phys. Anal. Geom.} 10(4), 313--358.




\bibitem{Amrein}W. O. Amrein, D. B. Pearson, 2004.
\newblock $M$-operators: a generalisation of Weyl-Titchmarsh theory.
\newblock {\em J. Comput. Appl. Math.} 171(1--2), 1--26.

\bibitem{Arov} D.~Z.~Arov, 1979.
\newblock{Passive linear steady-state dynamical systems.}
\newblock {\em Siberian Math. J.}, 20(1), 149--162.



\bibitem{AzizovIokhvidov}T.~Ya.~Azizov, I.~S.~Iokhvidov, 1989. {\it Linear Operators in Spaces with an Indefinite Metric.}
Wiley.



\bibitem{Bakhvalov_Panasenko} N.~Bakhvalov, G.~Panasenko, 1989. {\it Homogenisation: Averaging Processes in Periodic Media.} Kluwer Academic Publishers, Dordrecht.

\bibitem{Ball} J. A. Ball, 1975
Models for non contractions. {\em J. Math. Anal. Appl.} 52, 235--254.

\bibitem{BehrndtLanger2007} J. Behrndt, M. Langer, 2007.
Boundary value problems for elliptic partial differential operators on bounded domains. {\it J. Func. Anal.} 243(2), 536--565.


\bibitem{BehrndtLanger2012} J. Behrndt, M. Langer, 2012.
Elliptic operators, Dirichlet-to-Neumann maps and
quasi boundary triples. {\it In: Operator Methods for Boundary Value Problems,}
London Math. Soc. Lecture Notes 404, 121–-160.

	
	
\bibitem{OQS_Malamud}
J. Behrndt, M. M. Malamud, H. Neidhardt, 2006. Scattering theory for open quantum systems. {\it arXiv:math-ph/0610088,} 48\,pp.








\bibitem{BHS}
J.~Behrndt, S.~Hassi, H.~de Snoo, 2020.
{\it Boundary Value Problems, Weyl Functions, and Differential Operators.} Monographs in Mathematics 108, Birkh\"{a}user.


\bibitem{Lions}
A. Bensoussan, J.-L. Lions, G. Papanicolaou, 1978. {\it Asymptotic Analysis for Periodic Structures.} North Holland, Amsterdam.

\bibitem{Berezin_Faddeev}
F. A. Berezin, L. D.  Faddeev, 1961. Remark on the Schr\"{o}dinger equation with singular potential. (Russian) {\it Dokl. Akad. Nauk SSSR} 137, 1011--1014.



\bibitem{MR3013208}
G.~Berkolaiko and P.~Kuchment, 2013.
\newblock {\em Introduction to quantum graphs.} Mathematical Surveys and Monographs 186,
\newblock American Mathematical Society, Providence, RI.




\bibitem{MR0080271}
M.~\v{S}. Birman, 1956.
\newblock On the theory of self-adjoint extensions of positive definite operators.
\newblock {\em Mat. Sb. N.S.} 38(80), 431--450.

\bibitem{Birman}
M. S. Birman, 1962. Perturbations of the continuous spectrum of a singular elliptic operator by varying the boundary and the boundary conditions, {\it Vestnik Leningrad. Univ.} 17, 22--55. English translation in: Spectral theory of differential operators, {\it Amer. Math. Soc. Transl. Ser. 2} 225 (2008), 19--53, Amer. Math. Soc., Providence, RI.







  \bibitem{Birman_Solomyak}
M. S. Birman, M. Z. Solomiak, 1980. Asymptotics of the spectrum of variational problems on
solutions of elliptic equations in unbounded domains. {\it Funkts. Analiz Prilozhen.}
14, 27--35. English translation in: {\it Funct. Anal. Appl.} 14 (1981), 267--274.


\bibitem{MR1192782}
M.~\v{S}.~Birman and M.~Z.~Solomjak, 1987.
\newblock {\em Spectral theory of selfadjoint operators in Hilbert space}.
\newblock Mathematics and its Applications (Soviet Series). D. Reidel
  Publishing, Dordrecht.

  \bibitem{BirmanSuslina} M. Sh. Birman, T. A. Suslina, 2004. Second order periodic differential operators. Threshold properties and homogenisation. {\it St.\,Petersburg Math. J.} 15(5), 639--714.










\bibitem{deBranges}
L.~de~Branges,  J.~Rovnyak, 1966.
\newblock{{\it Square summable power series.} Holt, Rinehart and Winston, New
York.}

\bibitem{deBrangesRovnyak}
L.~de~Branges,  J.~Rovnyak, 1966.
\newblock{\textit{ Canonical  models in quantum scattering  theory.}
	In: Perturbation  Theory  and  its Applications in  Quantum Mechanics, Wiley, New  York, 295--392.}




\bibitem{BdeM} L. Boutet de Movel, 1971. Boundary problems for pseudo-differential operators. {\it Acta Math.} 126, 11--51.

\bibitem{Brodski}
M.~S.~Brodskij, 1971. \emph{Triangular and Jordan representations of linear
  operators.} Translations of Mathematical Monographs, Vol.\,32. American Mathematical Society, Providence, RI.


\bibitem{BrodskiLivsic}
M.~S.~Brodski\v{i}, M.~S.~Liv\v{s}ic, 1958. {\it Spectral analysis
of non-self-adjoint operators and intermediate systems}
(Russian). {\it Uspehi Mat. Nauk (N.S.)} 13, no. 1(79), 3--85.
English translation in: {\it Amer. Math. Soc. Transl. (2)} 13 (1960), 265--346.


\bibitem{MR2418300}
M.~Brown, M.~Marletta, S.~Naboko, I.~Wood, 2008.
\newblock Boundary triples and {$M$}-functions for non-selfadjoint operators,
  with applications to elliptic {PDE}s and block operator matrices.
\newblock {\em J. Lond. Math. Soc. (2)} 77(3) 700--718.

\bibitem{BMNW2018} M. Brown, M. Marletta, S. Naboko, I. Wood, 2020.
The functional model for maximal dissipative operators: An
approach in the spirit of operator knots. {\it Trans. Amer. Math. Soc.} 373, 4145-4187.

\bibitem{Pavlov_Helmholtz_resonator}
J. Br\"{u}ning, G. Martin, B. Pavlov, 2009. Calculation of the Kirchhoff coefficients for the Helmholtz resonator.
\emph{Russ. J. Math. Phys.} 16(2), 188--207.

\bibitem{Calkin}
J. W. Calkin, 1939. Abstract symmetric boundary conditions. {\it Trans. Amer. Math. Soc.} 45, 369--442.

\bibitem{Phys_book} F. Capolino, 2009. {\it Theory and Phenomena of Metamaterials.} Taylor \& Francis.

\bibitem{Carreau}
M. Carreau, 1993. Four-parameter point-interaction in 1D quantum systems. {\it J. Phys. A: Math. Gen.} 26, 427-432.



\bibitem{CherCoop} K. Cherednichenko, S. Cooper, 2016. Resolvent estimates for high-contrast homogenisation problems. {\it Arch. Rational Mech. Anal.} 219(3), 1061--1086.



\bibitem{KCher}
K. D. Cherednichenko, A. V. Kiselev, 2017. Norm-resolvent convergence of
one-dimensional high-contrast periodic problems to a Kronig-Penney
dipole-type model. {\it Comm. Math. Phys.} 349(2), 441--480.


\bibitem{CEK}
K.~D.~Cherednichenko, Yu.~Yu.~Ershova, A.~V.~Kiselev, 2019. Time-dispersive behaviour as a feature of critical contrast media,  {\it SIAM J. Appl. Math.} 79(2), 690--715.

\bibitem{CherErKis}
K. D. Cherednichenko, Yu. Ershova, A. V. Kiselev, 2020.
\newblock Effective behaviour of critical-contrast PDEs: micro-resonances, frequency conversion, and time dispersive properties. I.
{\it Commun. Math. Phys.} 375, 1833--1884.

\bibitem{CEKN}
K.~Cherednichenko, Y.~Ershova, A.~Kiselev, S.~Naboko, 2019. Unified approach to critical-contrast homogenisation with explicit links to time-dispersive media. {\it Trans. Moscow Math. Soc.} 80(2), 295--342.

\bibitem{CENS}
K. Cherednichenko, Y. Ershova, S. Naboko, L. Silva, 2021. Functional model for generalised resolvents and its application to time-dispersive media. {\it arXiv: 2111.05387}, 24\,pp.


\bibitem{CherednichenkoKiselevSilva}
K.~D.~Cherednichenko, A.~V.~Kiselev, L.~O.~Silva, 2018.
\newblock Functional model for extensions of symmetric operators and applications to scattering theory.
\newblock {\em Netw. and Heterog. Media} 13(2) 191--215.


\bibitem{CherKisSilva1} K. D. Cherednichenko, A. V. Kiselev, L. O. Silva, 2020. Scattering theory for non-selfadjoint  extensions of symmetric operators. {\it Oper. Theory Adv. Appl.} 276, 194--230.

\bibitem{CherKisSilva2}
K. Cherednichenko, A. Kiselev, L. Silva, 2021. Functional model for boundary-value problems. {\it Mathematika} 67(3), 596--626.










\bibitem{Davis}C.~Davis, 1980.
\newblock{J-unitary dilation of a general operator},
{\it Acta Sci.Math.} 31, 75--86.

\bibitem{DavisFoias} C.~Davis, C.~Foia\c{s}, 1971.
Operators with bounded characteristic function and their J-dilation,
{\it  Acta Sci. Math.} 32(1--2), 127-139.



\bibitem{demkov}
Y. N. Demkov, V. N. Ostrovskii, 1975.
\emph{The use of zero-range potentials in atomic physics.} Nauka, Moskva. (Russian)









\bibitem{DyM}H.~Dym, H.~McKean, 1976.
\newblock{\textit{Gaussian Processes, Function Theory, and the Inverse Spectral Problem.} Academic Press.}



\bibitem{Absorbing1} B. Engquist, A. Majda, 1977. Absorbing boundary conditions for the numerical simulation of waves, {\it Math. Comp.} 31(139) 629-–651.


\bibitem{MR3484377}
Y.~Ershova, I.~I. Karpenko, A.~V. Kiselev, 2016.
\newblock Isospectrality for graph {L}aplacians under the change of coupling at
  graph vertices.
\newblock {\em J. Spectr. Theory} 6(1), 43--66.


\bibitem{MR3430381}
Y.~Ershova, I.~I. Karpenko, A.~V. Kiselev, 2016.
\newblock Isospectrality for graph Laplacians under the change of coupling at
  graph vertices: necessary and sufficient conditions.
\newblock {\em Mathematika} 62(1), 210--242.


\bibitem{MR1459512}
P.~Exner. 1997.
\newblock A duality between Schr\"odinger operators on graphs and certain Jacobi matrices.
\newblock {\em Ann. Inst. H. Poincar\'e Phys. Th\'eor.} 66(4), 359--371.

\bibitem{Exner}
P. Exner, O. Post, 2005. Convergence of spectra of graph-like thin manifolds. {\it J. Geom. Phys.} 54(1), 77--115.






\bibitem{Faddeev_additional}
L.~D.~Faddeev, 1974.
\newblock The inverse problem in the quantum theory of scattering. II. (Russian) {\it Current problems in mathematics,} Vol. 3 (Russian),
\newblock Akad. Nauk SSSR Vsesojuz. Inst. Nau\v{c}n. i Tehn. Informacii, Moscow, 93--180.
\newblock English translation in:{\em J. Sov. Math.} 5 (1976), 334--396.





\bibitem{Figotin_Schenker_2005}
A. Figotin, J. H. Schenker, 2005. Spectral analysis of time dispersive and dissipative systems, {\it J. Stat. Phys.} 118(1--2), 199--263.

\bibitem{Figotin_Schenker_2007b}
A. Figotin, J. H. Schenker, 2007. Hamiltonian structure for dispersive and dissipative dynamical systems. {\it J. Stat. Phys.} 128(4), 969--1056.


	\bibitem{Friedlander_old}
	L. Friedlander, 1991. Some inequalities between Dirichlet and Neumann eigenvalues. {\it Arch. Rational Mech. Anal.} 116, 153--160.

	\bibitem{Friedlander} L. Friedlander, 2002. On the density of states of periodic media in the large coupling limit. {\it Comm. Partial Diff. Equations} 27(1--2), 355--380.

\bibitem{Fu}P.~Fuhrmann, 1981.
\newblock{\it Linear Systems and Operators in Hilbert Space.} McGraw-Hill, New York,


\bibitem{Gelfand}
I. M. Gel'fand, 1950. Expansion in characteristic functions of an equation with periodic coefficients. (Russian) {\it Doklady Akad. Nauk SSSR (N.S.)} 73, 1117--1120.

\bibitem{Gesztesy_Mitrea}
F. Gesztesy, M. Mitrea, 2011. A description of all self-adjoint extensions of the Laplacian and Kre\u\i n-type resolvent formulas on non-smooth domains. {\it J. Anal. Math.} 113, 53--172.


\bibitem{Krein}  I. C. Gohberg, M. G. Krein, 1969.
\emph{Introduction to the theory of linear nonself-adjoint operators},
Translations of Mathematical Monographs, Vol. 18. AMS, Providence, R.I.


\bibitem{Golovaty}
Yu. D. Golovaty, R. O. Hryniv, 2013. Norm resolvent convergence of singularly scaled Schrödinger operators and $\delta'$-potentials. {\it Proc. Roy. Soc. Edinburgh Sect. A} 143(4), 791–-816.



\bibitem{Grubb_classic} G. Grubb, 1984. Singular Green operators and their spectral asymptotics. {\it Duke Math. J.} 51(3), 477--528.

\bibitem{Grubb_Robin}
G. Grubb, 2011. Spectral asymptotics for Robin problems with a discontinuous coefficient. {\it J. Spectr. Theory} 1(2), 155--177.

\bibitem{Grubb_mixed}
G. Grubb, 2011. The mixed boundary value problem, Krein resolvent formulas and spectral asymptotic estimates. {\it J. Math. Anal. Appl.} 382(1), 339--363.





\bibitem{Absorbing2} L. Halpern, O. Lafitte, 2007. Dirichlet to Neumann map for domains with corners and
approximate boundary conditions. {\it J. Comp. Appl. Math.}  204, 505–-514.

\bibitem{HempelLienau_2000}
R. Hempel, K. Lienau, 2000. Spectral properties of the periodic media in
large coupling limit. {\it Commun. Partial Diff. Equations}  25, 1445--1470.

\bibitem{Hoermander} L. H\"{o}rmander, 2003. {\it The Analysis of Linear Partial Differential Operators III. Pseudo-Differential Operators.} Springer, Berlin.

\bibitem{Jikov_book} V. V. Jikov, S. M. Kozlov, O. A. Oleinik, 1994. {\it Homogenisation of Differential Operators and Integral Functionals.} Springer, Berlin.


%

\bibitem{MR0407617}
T.~Kato, 1976.
\newblock {\em Perturbation theory for linear operators}.
\newblock Springer, Berlin.






\bibitem{CherKisSilva3}
A. V. Kiselev, L. O. Silva, K. D. Cherednichenko, 2022. Operator-norm resolvent asymptotic analysis of continuous media with low-index inclusions. {\it Math. Notes} 111(3), 373--387.

\bibitem{MR0365218}
A.~N.~Ko{\v{c}}ube{\u\i}, 1975.
\newblock Extensions of symmetric operators and of symmetric binary relations.
\newblock {\em Math. Notes} 17(1), 25--28.




\bibitem{Krasnoselskii} M. A. Krasnoselskii, 1949. On self-adjoint extensions of Hermitian operators. (Russian) {\it Ukrain. Mat. Zhurnal} 1(1), 21--38.


\bibitem{MR0024574}
M.~Kre\u\i n, 1947.
\newblock The theory of self-adjoint extensions of semi-bounded {H}ermitian transformations and its applications. {I}.
\newblock {\em Rec. Math. [Mat. Sbornik] N.S.} 20(62), 431--495.

\bibitem{MR0024575}
M.~G. Kre\u\i n, 1947.
\newblock The theory of self-adjoint extensions of semi-bounded {H}ermitian
transformations and its applications. {II}.
\newblock {\em Mat. Sbornik N.S.} 21(63), 365--404.


\bibitem{KuchmentZeng}
 P. Kuchment, H. Zeng, 2001. Convergence of spectra of mesoscopic systems collapsing onto a graph. {\it J. Math. Anal. Appl.} 258(2), 671--700.

 \bibitem{KuchmentZeng2004}
 P. Kuchment,  H. Zeng, 2004. Asymptotics of spectra of Neumann Laplacians in thin domains. {\it Contemporary Mathematics} 327,
Amer. Math. Soc., Providence, RI, 199--213.


\bibitem{Kuzhel}A.~Kuzhel, 1996.
\newblock {\em Characteristic Functions and Models of Nonself-Adjoint Operators.}
\newblock Kluwer Academic Publishers,
\newblock Dordrecht, The Netherlands.



\bibitem{MR2600145}
P.~Kurasov, 2010.
\newblock Inverse problems for {A}haronov-{B}ohm rings.
\newblock {\em Math. Proc. Cambridge Philos. Soc.} 148(2), 331--362.


\bibitem{Kurasov_supersingular}
P. Kurasov, 2003. ${\mathscr H}_{-n}$-perturbations of self-adjoint operators and Krein's resolvent formula. {\it Integral Equations Operator Theory} 45(4), 437--460.

\bibitem{Kurasov_surfaces}
P. Kurasov, B. Pavlov, 1989. Surfaces with an Internal Structure. In: P.Exner, P.Seba (eds) ``Applications of Self-Adjoint Extensions in Quantum Physics'' (Proc. Dubna, USSR, 1987), Lectures Notes in Physics 324, Springer-Verlag, Berlin,




\bibitem{MR0217440}
P.~D.~Lax, R.~S.~Phillips, 1967.
\newblock {\em Scattering theory}.
\newblock Pure and Applied Mathematics, Vol. 26. Academic Press, New
  York-London.

\bibitem{Lax1973}
P. D. Lax, R. S. Phillips, 1973. Scattering theory for dissipative hyperbolic systems. {\it J. Functional Analysis} 14, 172--235.


\bibitem{MR0020719}
M.~S.~Liv\v{s}ic, 1946
\newblock On a certain class of linear operators in {H}ilbert space.
\newblock {\em Rec. Math. [Mat. Sbornik] N.S.} 19(61), 239--262.

\bibitem{Livsic1954}
M.~S.~Liv\v{s}ic, 1954. On spectral decomposition of linear non-self-adjoint operators. {\it Mat. Sbornik N.S.} 34(76) 145--199 (Russian). English translation in: {\it Amer. Math. Soc. Transl. (2)} 5 (1957), 67--114.

\bibitem{LivsicBook}
M.~S.~Liv\v{s}ic, 1973.
\newblock {\em Operator, Oscillations, Waves. Open Systems.}
\newblock Translations of Mathematical Monographs. Vol. 34. American Mathematical Society, Providence, RI.


\bibitem{Mak_Vas}
N.~G.~Makarov, V.~I.~Vasjunin, 1981. {\it A model for noncontractions and stability of the continuous spectrum.} Lecture Notes in Mathematics 864, 365--412.




\bibitem{McEnnis}
B. McEnnis, 1981. Models for operators with bounded characteristic function, {\it J. Oper. Theory} 43(1-2), 71--90.




 \bibitem{MMP}
A. B. Movchan, N. V. Movchan, C. G. Poulton, 2002. {\it Asymptotic Models of Fields in Dilute and Densely Packed Composites.} Imperial College Press.





\bibitem{MR0500225}
S.~N. Naboko, 1976.
\newblock Absolutely continuous spectrum of a nondissipative operator, and a
  functional model. {I}.
\newblock {\em Zap. Nau\v cn. Sem. Leningrad. Otdel Mat. Inst. Steklov.
  (LOMI)} 65, 90--102.

\bibitem{nabokozapiski2}
S.~N. Naboko, 1977.
\newblock Absolutely continuous spectrum of a nondissipative operator, and a functional model. {II}.
\newblock {\em Zap. Nau\v cn. Sem. Leningrad. Otdel Mat. Inst. Steklov.
  (LOMI)} 73, 118--135.

\bibitem{MR573902}
S.~N. Naboko, 1980.
\newblock Functional model of perturbation theory and its applications to scattering theory.
\newblock {\em Trudy Mat. Inst. Steklov.} 147, 86--114.





\bibitem{MR1036844}
S.~N.~Naboko, 1989.
\newblock Nontangential boundary values of operator {$R$}-functions in a half-plane.
\newblock {\em Algebra i Analiz} 1(5), 197--222.

\bibitem{MR1252228}
S.~N.~Naboko, 1993. On the conditions for existence of wave operators in the nonselfadjoint case. {\it Wave propagation. Scattering theory, Amer. Math. Soc. Transl. Ser. 2} 157, 127--149, American Mathematical Society, Providence, RI.



\bibitem{Neumark1}
M. Neumark, 1940 Self-adjoint extensions of the second kind of a symmetric operator. (Russian) {\it Bull. Acad. Sci. URSS. Sér. Math.} 4, 53--104.


\bibitem{Naimark1940}
M. Neumark, 1940. Spectral functions of a symmetric operator. (Russian)
{\it Bull. Acad. Sci. URSS. Ser. Math.} 4, 277--318.

\bibitem{Naimark1943}
M. Neumark, 1943 Positive definite operator functions on a commutative
group. (Russian) {\it Bull. Acad. Sci. URSS Ser. Math.} 7, 237--244.


\bibitem{Nikolski} N. K. Nikolski, 2002. {\it Operators, Functions, and Systems: An Easy Reading.} Vol. 1, 2., Mathematical Surveys and Monographs 92, American Mathematical Society, Providence, RI.



\bibitem{NikolskiiKhrushchev}
 N.~K.~Nikol'skii, S.~V.~Khrushchev, 1988. A functional model and some problems of the spectral theory of functions. {\it Proc. Steklov Inst. Math.} 176, 101--214.

\bibitem{NikolskiiVasyunin1986}
N.~K.~Nikol'skii, V.~I.~Vasyunin, 1986. {\it Notes on two function models},
Proc. Conter. on the Occasion of the proof of the Bieberbach conjecture, 113--141.

\bibitem{NikolskiiVasyunin1989}
 N.~K.~Nikol'skii, V.~I.~Vasyunin, 1989. A unified approach to function models, and the transcription problem. In:{\it The Gohberg anniversary collection} (Calgary, AB, 1988), Vol.~2, {\it Oper. Theory Adv. Appl.} 41, Birkh\"auser, Basel, 405--434.

\bibitem{NikolskiiVasyunin1998}
 N.~K.~Nikol'skii, V.~I.~Vasyunin, 1998. Elements of spectral theory in terms of the free function model. In: {\it Holomorphic Spaces},
Math. Sci. Research Inst. Publications vol. 33, Cambridge University Press, 211--302.








\bibitem{MR0365199}
B.~S.~Pavlov, 1975.
\newblock Conditions for separation of the spectral components of a dissipative operator.
\newblock {\em Izv. Akad. Nauk SSSR Ser. Mat.} 39, 123--148. English translation in: {\em Math. USSR Izvestija} 9 (1975), 113--137.

\bibitem{MR0510053}
B.~S.~Pavlov, 1977. Selfadjoint dilation of a dissipative Schr\"{o}dinger operator, and expansion in its eigenfunction. (Russian) {\it Mat. Sb. (N.S.)} 102(144), 511--536.


\bibitem{Drogobych}
B.~S.~Pavlov, 1976
\newblock Diation theory and the spectral analysis of non-selfadjoint
differential operators.
\newblock {\em Proc. 7th Winter School, Drogobych, 1974}, TsEMI, Moscow, 2--69, English translation: {\it Transl., II Ser., Am. Math. Soc} 115 (1981), 103--142.

 \bibitem{Pavlov_internal_structure}
B.~S.~Pavlov, 1984. A model of zero-radius potential with internal
structure. (Russian) {\it Teoret. Mat. Fiz.} 59(3), 345--353.




\bibitem{PavlovFaddeev}
B. S. Pavlov, M. D. Faddeev, 1986.  Construction of a self-adjoint dilatation for a problem with impedance boundary condition. {\it J. Soviet Math.} 34, 2152--2156.


\bibitem{Pavlov1988}
B. S. Pavlov, A. A. Shushkov, 1988. Extension theory and zero-range potentials with internal structure. (Russian) {\it Sb. Mat.} 137(179), no.\,2(10), 147--183.







\bibitem{poulton}
N. A. Nicorovici, C. G. Poulton, R. C. McPhedran, 1996. Analytical results for a class of sums involving Bessel functions and square arrays. {\it Journal of Mathematical Physics} 37, 2043.






\bibitem{MR822228}
M.~Rosenblum and J.~Rovnyak, 1985.
\newblock {\em Hardy classes and operator theory}.
\newblock Oxford Mathematical Monographs. Oxford University Press, New York.


\bibitem{Ryzh_ac_sing}
V.~Ryzhov, 1997. Absolutely continuous and singular subspaces of a nonselfadjoint operator. {\it J. Math. Sci.
(New York)} 87(5), 3886--3911.

\bibitem{MR2330831}
V.~Ryzhov, 2007.
\newblock Functional model of a class of non-selfadjoint extensions of
  symmetric operators.
\newblock In: {\em Operator theory, analysis and mathematical physics}, {\em Oper. Theory Adv. Appl.} 174, 117--158. Birkh\"auser, Basel.


\bibitem{Ryzhov_closed}
 V.~Ryzhov, 2008. Functional model of a closed non-selfadjoint operator. {\it Integral Equations Operator Theory} 60(4), 539--571.



\bibitem{Ryzhov_systems} V.~Ryzhov, 2009.
Weyl-Titchmarsh function of an abstract boundary value problem,
operator colligations, and linear systems with boundary control.
{\it Compl. Anal. Oper. Theory} 3, 289-322.


\bibitem{Ryzh_spec} V. Ryzhov, 2020.
Linear operators and operator functions associated with spectral boundary value problems. In: Analysis as a Tool in Mathematical
Physics. {\it Oper. Theory: Adv. Appl.} 276, 576--626.




\bibitem{Schechter}
M. Schechter, 1960. A generalization of the problem of transmission. {\it
	Ann. Scuola Norm. Sup. Pisa} 14(3), 207--236.


\bibitem{MR2953553}
K.~Schm{\"u}dgen, 2012.
\newblock {\em Unbounded self-adjoint operators on Hilbert space.} Graduate Texts in Mathematics 265.
\newblock Springer, Dordrecht.


\bibitem{Shkalikov_1983}
A. A. Shkalikov, 1983. Boundary problems for ordinary differential equations with parameter in the boundary conditions. {\it J. Soviet. Math.} 33(6), 1311--1342.


\bibitem{Solomyak2}
B.~M.~Solomyak, 1992. On the functional model for dissipative operators. The coordinate-free approach. {\it J. Soviet Math.} 61(2), 1981-–2002.


\bibitem{Calderon} I. M. Stein, 1970. \emph{Singular integrals and differentiability properties of functions.} Princeton Math. Ser. vol. 30, Princeton University Press, Princeton, NJ.





\bibitem{Strauss} A. V. \v{S}traus, 1954. Generalised resolvents of symmetric operators (Russian) \emph{Izv. Akad. Nauk SSSR, Ser. Mat.} 18, 51--86.

\bibitem{Strauss1960}
A. V. \v{S}trauss, 1960. Characteristic functions of linear operators. (Russian) {\it Izv. Akad. Nauk SSSR Ser. Mat.} 24(1), 43--74.



 \bibitem{Strauss_ext}
A. V. Strauss, 1970. Extensions and generalized resolvents of a non-densely defined symmetric operator. (Russian) {\it Izv. Akad. Nauk SSSR Ser. Mat.} 34, 175--202.


\bibitem{Strauss_survey} A. V. \v{S}traus, 1999. Functional models and  generalized spectral functions of symmetric operators. {\it St. Petersburg Math. J.} 10(5), 733-784

\bibitem{MR2760647}
B.~Sz.-Nagy, C.~Foias, H.~Bercovici, L.~K{\'e}rchy, 2010.
\newblock {\em Harmonic Analysis of Operators on Hilbert Space}. Springer, New York.









\bibitem{Tip_1998}
A. Tip, 1998. Linear absorptive dielectrics. {\it Phys. Rev. A} 57, 4818--4841.

\bibitem{Tip_2006}
A. Tip, 2006. Some mathematical properties of Maxwell's equations for macroscopic dielectrics. {\it J. Math. Phys.} 47, 012902.




\bibitem{Tutte}
W.~T.~Tutte, 1984.
\emph{Graph theory.}
\newblock Encyclopedia of Mathematics and its Applications, 21. Addison-Wesley Publishing Company, Advanced Book Program, Reading, MA.


\bibitem{Veselago} V. G. Veselago, 1968. The electrodynamics of substances with simultaneously negative values of $\e$ and $\mu$. {\it Soviet Physics Uspekhi.} 10(4), 509--514.

%




\bibitem{MR0051404} M.~I. Vi\v{s}ik, 1952. \newblock On general boundary
problems for elliptic differential equations.  \newblock {\em Trudy
	Moskov. Mat. Ob\v{s}\v{c}.} 1, 187--246.











\bibitem{Zhikov_1989}
V. V. Zhikov, 1989. Spectral approach to asymptotic diffusion problems (Russian). {\it Differentsial'nye uravneniya} 25(1), 44--50.

\bibitem{Zhikov2000}
V. V. Zhikov, 2000. On an extension of the method of two-scale convergence and its applications, {\it Sbornik: Mathematics} 191(7), 973--1014.


\bibitem{Zhikov_2004} V. Zhikov, 2005. On spectrum gaps of some divergent elliptic operators with periodic coefficients. {\it St.\,Petersburg Math. J.} 16(5), 773--790.

\end{thebibliography}
\end{document}